\newtheorem{conjecture}{Conjecture}
\newtheorem{theorem}{Theorem}[section]
\newtheorem{remark}[theorem]{Remark}
\newtheorem{lemma}[theorem]{Lemma}
\newtheorem{proposition}[theorem]{Proposition}
\newtheorem{corollary}[theorem]{Corollary}
\newtheorem{problem}{Problem}
\newcommand{\BB}{\mathbb B}
\def\bC{\mathbb C}
\def\bT{\mathbb T}
\begin{document}
\title{Exceptional Zeros of $L$-series and Bernoulli-Carlitz Numbers}

\author{Bruno Angl\`es  \and Tuan Ngo Dac \and Floric Tavares Ribeiro}

\address{
Universit\'e de Caen Normandie, 
Laboratoire de Math\'ematiques Nicolas Oresme, 
CNRS UMR 6139, 
Campus II, Boulevard Mar\'echal Juin, 
B.P. 5186, 
14032 Caen Cedex, France.
}
\email{bruno.angles@unicaen.fr, tuan.ngodac@unicaen.fr, floric.tavares-ribeiro@unicaen.fr}

\address{Department of Mathematics, The Ohio State University, 231 W
. 18th Ave., Columbus, Ohio 43210}
\email{goss@math.ohio-state.edu}
\address{ 
Institut Camille Jordan, UMR 5208
Site de Saint-Etienne,
23 rue du Dr. P. Michelon,
42023 Saint-Etienne, France
}
\email{federico.pellarin@univ-st-etienne.fr}

\begin{abstract} Bernoulli-Carlitz numbers were introduced by L. Carlitz in 1935, they are the analogues in positive characteristic of Bernoulli numbers. We prove a conjecture formulated  by F. Pellarin and the first author on the non-vanishing modulo a given prime of families of Bernoulli-Carlitz numbers. We then show that the "exceptional zeros" of certain $L$-series are intimately connected to the Bernoulli-Carlitz numbers.  \end{abstract}
\date{ \today}
\maketitle
\centerline{With an appendix by B. Angl\`es, D. Goss, F. Pellarin, F. Tavares Ribeiro}
\tableofcontents
\section{Introduction}${}$\par
Recently, M. Kaneko and D. Zagier have introduced the $\mathbb Q$-algebra of finite multiple zeta values which is a sub-$\mathbb Q$-algebra of $\mathcal A:=\frac{\prod_{p} \mathbb F_p}{\oplus_{p}\mathbb F_p}$ ($p$ runs through the prime numbers). This algebra of finite multiple zeta values contains the following elements:
$$\forall k \geq 2, {\mathcal Z}(k)=\left(\left(\frac{B_{p-k}}{k}\right)_p\right)\in \mathcal A,$$
where $B_n$ denotes the $n$th Bernoulli number. It is not known that the algebra of finite multiple zeta values is non-trivial. In particular, it is an open problem to prove that ${\mathcal Z}(k)\not =0$ for $k\geq 3, k\equiv 1\pmod{2}$ (observe that ${\mathcal Z}(k)=0$ if $k\geq 2, k\equiv 0\pmod{2}$). This latter problem is equivalent to the following:
\begin{conjecture}\label{ConjectureKAZ} Let $k\geq 3$ be an odd integer. Then, there exist infinitely many primes $p$ such that $B_{p-k}\not \equiv 0\pmod{p}.$
\end{conjecture}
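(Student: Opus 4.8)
The plan is to use the equivalence recorded above and establish that $\mathcal Z(k)\neq 0$ in $\mathcal A$, i.e.\ that the set of primes $p$ with $p\mid B_{p-k}$ does not contain all but finitely many primes. Since $k$ is odd, for every prime $p>k$ the index $p-k$ is even and satisfies $2\le p-k\le p-3$, so $B_{p-k}$ is $p$-integral by von Staudt--Clausen and the congruence $p\mid B_{p-k}$ is precisely the assertion that $(p,p-k)$ is an irregular pair. The first step is to repackage this condition analytically: the Kummer congruences give $\tfrac{B_{p-k}}{p-k}\equiv-u_p\,L_p\big(1-(p-k),\omega^{1-k}\big)\pmod p$, where $u_p=(1-p^{\,p-k-1})^{-1}$ is a $p$-adic unit (note $p-k-1\ge 1$) and $L_p(\,\cdot\,,\omega^{1-k})$ is the Kubota--Leopoldt $p$-adic $L$-function attached to the character $\omega^{1-k}$, which for $p>k$ is even and nontrivial and hence corresponds to a \emph{nonzero} Iwasawa power series $f_{p,k}\in\mathbb Z_p[[T]]$. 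Thus $p\mid B_{p-k}$ holds if and only if $f_{p,k}$ takes a value in $p\mathbb Z_p$ at one explicit point of $\mathbb Z_p$ depending on $(p,k)$; by the Herbrand--Ribet theorem this is in turn equivalent to the nontriviality of a prescribed eigenspace of the $p$-part of the class group of $\mathbb Q(\mu_p)$.

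The second step is to deduce the required non-vanishing from sparsity of irregularity. Heuristically $B_{p-k}\bmod p$ is an equidistributed element of $\mathbb F_p$ as $p$ varies, so $\#\{p\le X:\ p\mid B_{p-k}\}$ ought to be $O(\log\log X)=o(\pi(X))$, which already leaves infinitely many $p$ with $B_{p-k}\not\equiv 0$. To make this unconditional one would try to bound the number of irregular pairs with prescribed index $p-k$: using the Mazur--Wiles main conjecture to realize $f_{p,k}$ as the characteristic power series of an Iwasawa module built from the class groups of the fields $\mathbb Q(\mu_{p^n})$, and then combining an averaging argument over $p$ with an upper bound for the index of irregularity; alternatively one could invoke equidistribution statements for $\tfrac{B_{p-k}}{p-k}\bmod p$, which are known on average over $k$ or under suitable Riemann-type hypotheses. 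Either route reduces Conjecture~\ref{ConjectureKAZ} to showing that the relevant class-group eigenspace vanishes for a positive proportion of, or at least for infinitely many, primes $p$.

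The hard part --- indeed, essentially the \emph{only} hard part --- is this last step: for no single fixed odd $k\ge 3$ is it currently known how to exhibit infinitely many primes $p$ with $B_{p-k}\not\equiv 0\pmod p$, a statement equivalent to the nontriviality of the $\mathbb Q$-algebra of finite multiple zeta values in odd weight, and even the weaker assertions one might hope to exploit (infinitude of regular primes; vanishing of a prescribed class-group eigenspace for infinitely many $p$) are themselves open. The strategy I would pursue is therefore to exploit the positive-characteristic model developed in the present paper: one replaces $\mathbb Z$ by $\mathbb F_q[\theta]$, Bernoulli numbers by Bernoulli--Carlitz numbers, and the Kubota--Leopoldt $L$-functions by the $L$-series of Carlitz--Goss and Pellarin, whose \emph{exceptional zeros} are, as shown below, controlled explicitly by the Bernoulli--Carlitz numbers through the class-module and unit-module machinery; in that setting the Carlitz analogue of these non-vanishing statements --- the conjecture of Pellarin and the first author --- is proved here unconditionally, and the expectation is that the mechanism forcing non-vanishing in characteristic $p$ will isolate the structure that must be uncovered over $\mathbb Q$. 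Producing a genuine reduction of Conjecture~\ref{ConjectureKAZ} to its Carlitz counterpart is the decisive obstacle that remains.
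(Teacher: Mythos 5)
There is no proof here to check against the paper, because the paper itself does not prove this statement: it is Conjecture~\ref{ConjectureKAZ}, recorded in the introduction as an \emph{open} problem (equivalent, via Kaneko--Zagier, to the non-vanishing of ${\mathcal Z}(k)$ in $\mathcal A$ for odd $k\ge 3$), and what the paper actually proves is only its positive-characteristic analogue, Theorem~\ref{TheoremATR-BC}. Your text is candid about this --- you say explicitly that the decisive step is not known --- so what you have written is a survey of reductions and a research programme, not a proof, and it should not be presented as a proof of the statement.

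Two concrete points on the content. First, the chain Kummer congruences $\rightarrow$ Kubota--Leopoldt $\rightarrow$ Herbrand--Ribet $\rightarrow$ Mazur--Wiles merely \emph{reformulates} the condition $p\mid B_{p-k}$ as the non-triviality of an eigenspace of the class group of $\mathbb Q(\mu_p)$; it produces no upper bound on how often this happens as $p$ varies, and no such bound is known for any fixed $k$ (even the infinitude of regular primes is open). The heuristic count $O(\log\log X)$ for the irregular pairs $(p,p-k)$ is the standard random-model expectation (and is consistent with Kaneko's remark quoted in the introduction that one \emph{also} expects infinitely many $p$ with $B_{p-k}\equiv 0$), but it is unconditional in no sense. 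Second, the closing suggestion --- to transfer the paper's Carlitz-module mechanism (the polynomial $B_N(t,\theta)$, the class module $H(\phi^{(N)})$, the Newton-polygon/Sheats--Diaz-Vargas combinatorics) back to $\mathbb Z$ --- has no known realization; the proof of Theorem~\ref{TheoremATR-BC} uses degree bounds in $\theta$ and $t$ that have no counterpart over $\mathbb Q$, where there is no second variable to exploit. So the gap is total: every route you sketch terminates in an open problem, as you yourself acknowledge.
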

Let $k\geq 3$ be an odd integer. M. Kaneko (\cite{KAZ}) remarked that, viewing the $B_{p-k}$'s as being random modulo $p$ when $p$ varies through the prime numbers, taking into account that $\sum_p\frac{1}{p}$ diverges, then it is reasonable to expect that there exist infinitely many prime numbers $p$ such that $B_{p-k}\equiv 0\pmod{p}.$\par
Let $\mathbb F_q$ be a finite field having $q$ elements, $q$ being a power of a prime number $p$,  and  let $\theta$ be an indeterminate over $\mathbb F_q.$ In 1935, L. Carlitz has introduced the analogues of Bernoulli numbers for $A:=\mathbb F_q[\theta]$ (\cite{CAR}). The Bernoulli-Carlitz numbers, $BC_n\in K:=\mathbb F_q(\theta),$ $n\in \mathbb N,$  are defined as follows:\par
\noindent - $ BC_n=0$ if $n\not \equiv 0\pmod{q-1},$\par
\noindent - for $n\equiv 0\pmod{q-1},$ we have:
$$\frac{BC_n}{\Pi(n)}=\frac{\zeta_A(n)}{\widetilde{\pi}^n},$$
where $\Pi(n)\in A$ is the Carlitz factorial (\cite{GOS}, chapter 9, paragraph 9.1), $\widetilde{\pi}$ is the Carlitz period (\cite{GOS}, chapter 3, paragraph 3.2), and $\zeta_A(n):=\sum_{a\in A, a{\rm \, monic}}\frac{1}{a^n}\in K_\infty:=\mathbb F_p((\frac{1}{\theta}))$ is the value at $n$ of the Carlitz-Goss zeta function.  The Bernoulli-Carlitz numbers are  connected to Taelman's class modules introduced in \cite{TAE1} (see for example \cite{TAE2} and \cite{AT}). L. Carlitz established a von-Staudt result for these numbers (\cite{GOS}, chapter 9, paragraph 9.2), and as an easy consequence, we get that if  $P$ is a monic irreducible polynomial in $A,$ then $BC_n$ is $P$-integral for $0\leq n\leq q^{\deg_\theta P}-2.$  It is natural to ask if  Conjecture \ref{ConjectureKAZ} is valid in the carlitzian context.  In this paper, we prove a stronger result which answers positively to a Conjecture formulated in \cite{AP}:
\begin{theorem}\label{TheoremATR-BC} Let $N\geq 2$ be an integer, $N\equiv 1\pmod{q-1}.$ Let $\ell_q(N)$ be the sum of the digits in base $q$ of $N.$ Let $P\in A$ be a monic irreducible polynomial of degree $d$ such that $q^d>N.$ If $d\geq \frac{\ell_q(N)-1}{q-1}N,$ then:
$$BC_{q^d-N}\not \equiv 0\pmod{P}.$$
\end{theorem}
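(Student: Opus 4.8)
The plan is to reduce the assertion to the non-vanishing in $A/P$ of a single Taylor coefficient of a truncated reciprocal Carlitz exponential, and then to control that coefficient combinatorially; the inequality on $d$ is exactly what will prevent the relevant characteristic-$p$ sum from collapsing. First I would use the defining generating series of the Bernoulli--Carlitz numbers, $\frac{x}{e_C(x)}=\sum_{n\ge 0}\frac{BC_n}{\Pi(n)}x^n$, where $e_C(x)=\sum_{i\ge 0}x^{q^i}/D_i$ is the Carlitz exponential. Since $0\le q^d-N\le q^d-2$, Carlitz's von Staudt theorem recalled in the introduction shows $BC_{q^d-N}$ is $P$-integral; moreover the base-$q$ expansion of $q^d-N$ has digits only in positions $0,\dots,d-1$, so $\Pi(q^d-N)$ is a product of powers of $D_0,\dots,D_{d-1}$, each of which is a unit modulo $P$ since $P$ is irreducible of degree $d$. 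Hence it suffices to show $\frac{BC_{q^d-N}}{\Pi(q^d-N)}\not\equiv 0\pmod P$. Writing $\frac{x}{e_C(x)}=\bigl(\sum_{i\ge 0}x^{q^i-1}/D_i\bigr)^{-1}$ and observing that the summands with $i\ge d$ affect only coefficients of degree $\ge q^d-1>q^d-N$, one may truncate the inner series at $i=d-1$; the truncation has constant term $1$ and $P$-integral coefficients, so reduction modulo $P$ gives the identity in $A/P$
$$\frac{BC_{q^d-N}}{\Pi(q^d-N)}\ \equiv\ [x^{q^d-N}]\,\frac{1}{\ \sum_{i=0}^{d-1}\overline{D_i}^{\,-1}x^{q^i-1}\ }\pmod P,$$
with $\overline{D_i}$ the image of $D_i$.

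Then I would expand the reciprocal geometrically, so that the coefficient in question becomes a finite sum, over multisets $\{i_1,\dots,i_k\}\subseteq\{1,\dots,d-1\}$ with $\sum_j(q^{i_j}-1)=q^d-N$, of terms $(-1)^k\binom{k}{m_1,\dots,m_{d-1}}\prod_i\overline{D_i}^{\,-m_i}$, the multinomial coefficients being read modulo $p$. Reading this constraint through the base-$q$ digits of $q^d-N$ (whose digit sum is $d(q-1)-\ell_q(N)+1$ up to a correction coming from trailing zero digits of $N$), one isolates the contribution built from the largest admissible powers; then, discarding the multisets with $p$-divisible multinomial coefficient via Lucas' theorem, and using the recursion $D_i=(\theta^{q^i}-\theta)D_{i-1}^{\,q}$ to factor a common unit monomial in the $\overline{D_i}$ out of the whole sum, one aims to be left with the image in $A/P$ of a nonzero polynomial $f\in A$ with $\deg_\theta f\le\frac{\ell_q(N)-1}{q-1}N-1$. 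The hypothesis $d\ge\frac{\ell_q(N)-1}{q-1}N$ then gives $\deg_\theta f<d=\deg_\theta P$, so $P\nmid f$, hence $\overline f\ne 0$ and $BC_{q^d-N}\not\equiv 0\pmod P$.

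The hard part is the second step, and it is twofold: in characteristic $p$ nothing prevents the many contributing multisets from cancelling, so one must (i) organise them so that a genuinely surviving main term remains, keeping track of all multinomial coefficients, and (ii) follow how the denominators $D_i$ recombine, in order to establish at once that the remaining sum is the reduction of a nonzero polynomial and that its $\theta$-degree is at most $\frac{\ell_q(N)-1}{q-1}N-1$. It is the matching of this degree bound against $\deg P=d$ that makes $d\ge\frac{\ell_q(N)-1}{q-1}N$ the natural hypothesis under which the argument goes through.
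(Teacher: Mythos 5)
Your first step is sound: reducing to the coefficient of $x^{q^d-N}$ in the inverse of the truncated series $\sum_{i=0}^{d-1}D_i^{-1}x^{q^i-1}$ is legitimate (the terms with $i\geq d$ only affect coefficients of degree $\geq q^d-1$, and $D_0,\dots,D_{d-1}$ are indeed units modulo $P$), and matching a degree bound in $\theta$ against $\deg_\theta P=d$ is exactly the mechanism that makes the hypothesis $d\geq\frac{\ell_q(N)-1}{q-1}N$ do its work. But the proposal has a genuine gap, and it sits precisely where you locate ``the hard part'': you never establish that the characteristic-$p$ sum of multinomial contributions is the reduction of a \emph{nonzero} polynomial, nor that its $\theta$-degree is at most $\frac{\ell_q(N)-1}{q-1}N-1$. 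Saying that ``one aims to be left with'' such an $f$ after organising the multisets and applying Lucas is a restatement of the theorem, not a proof of it: in characteristic $p$ there is no a priori surviving main term, and the whole difficulty of the result is to exhibit one. As written, the argument proves only the (known) $P$-integrality of $BC_{q^d-N}/\Pi(q^d-N)$ plus a conditional degree count.

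For comparison, the paper does not work with the reciprocal exponential at all. It introduces the two-variable polynomial $B_N(t,\theta)$, obtained by specializing the symmetric polynomial $\mathbb B_s=(-1)^{(s-1)/(q-1)}L_s\,\omega(t_1)\cdots\omega(t_s)/\widetilde\pi$ at $t_i=t^{q^{e_i}}$, and shows (Proposition \ref{PropositionATR1}) that $BC_{q^d-N}\equiv 0\pmod P$ if and only if $B_N(\theta,\theta)\equiv 0\pmod P$. The two points you leave open are then handled by two separate devices: the non-vanishing $B_N(\theta,\theta)\neq 0$ comes from an induction on $r=\frac{\ell_q(N)-q}{q-1}$ using the specialization identity $\mathbb B_s(t_1,\dots,t_{s-(q-1)},0,\dots,0)=(\theta-t_1\cdots t_{s-(q-1)})\mathbb B_{s-(q-1)}$, which yields the lower bound $\rho(B_{i,s})\geq i(q-1)+1$ on the $t$-degrees of the coefficients and hence $B_N(\theta,\theta)=\theta^r+(\text{higher order terms})\neq 0$; the degree bound $\deg_\theta B_N(\theta,\theta)<(r+1)N$ comes from showing that every root $x$ of $B_N(t,\theta)$ in $t$ satisfies $v_\infty(x)\leq-1/N$, via the Euler product for $L_N$. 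If you want to salvage your route, you would need a substitute for both of these — most plausibly by recognising your organised multiset sum as (a unit multiple of) $B_N(\theta,\theta)$ and importing exactly this machinery, at which point the detour through the reciprocal exponential buys nothing.
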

The above Theorem is linked with the study of exceptional  zeros of certain $L$-series introduced in 2012 by F. Pellarin (\cite{PEL}), but from a slightly different point of view. More precisely, let $N$ be as above and for simplicity we assume that $\ell_q(N)\geq q,$ let $t$ be an indeterminate over $K_\infty,$ let's consider:
$$\mathcal L_N(t):=\sum_{d\geq 0}\sum_{a\in A_{+,d}} \frac{a^N}{a(t)}\in  A[[\frac{1}{t}]]^\times,$$
where $A_{+,d}$ is the set of monic elements in $A$ of degree $d.$  It was already noticed by F. Pellarin (\cite{PEL2})  that such $L$-series can be related with Anderson's solitons and should play an important role in the arithmetic theory of function fields. Let $\mathbb C_\infty$ be the completion of a fixed algebraic closure of $K_\infty.$ Then, one can show that  $\mathcal L_N(t)$ converges on $\{ x\in \mathbb C_\infty, v_\infty(x)<0\},$ where $v_\infty$ is the valuation on $\mathbb C_\infty$ normalized such that $v_\infty(\theta)=-1.$   Furthermore, one can easily see that the elements of $S:=\{ \theta^{q^j}, j\in \mathbb Z, q^j\leq N \}$ are zeros of the function $\mathcal L_N(t).$ We call  the zeros of $\mathcal L_N(t)$ which  belong to $\{ x\in \mathbb C_\infty, v_\infty(x)<0\}\setminus S$  the \emph{exceptional zeros} of $\mathcal L_N(t).$ Let's briefly describe the case $q=p$. In this case, the exceptional zeros of $\mathcal L_N(t)$ are simple, belong to $\mathbb F_p((\frac{1}{\theta}))$ and  are the eigenvalues of a certain $K$-linear endomorphism $\phi^{(N)}_t$ of a finite dimensional $K$-vector space $H(\phi^{(N)})$ connected to the generalization of Taelman's class modules introduced in \cite{APT}.  The proof of the fact that the exceptional zeros are simple and "real" uses combinatorial techniques introduced by F. Diaz-Vargas (\cite{DIV}) and J. Sheats (\cite{SHE}). Furthermore, if $p^d>N,$ then:
$$BC_{p^d-N}\frac{(-1)^{\frac{\ell_p(N)-p}{p-1}}\, \prod_{l=0}^k \prod_{n=0, n\not =l}^{d-1} (\theta^{p^l}-\theta^{p^n})^{n_l}}{\Pi(N)\Pi(p^d-N)}=\det_K\, (\theta^{p^d}{\rm Id} -\phi^{(N)}_t\mid_{H(\phi^{(N)})}),$$
where $\Pi(.)$ is the Carlitz factorial, and $N=\sum_{l=0}^k n_l p^l, n_l\in \{0, \ldots, p-1\}.$ Since the eigenvalues of $\phi^{(N)}_t$ are exactly in this situation the exceptional zeros of $\mathcal L_N(t),$ we also obtain another proof of Theorem \ref{TheoremATR-BC} as  a consequence of the fact that: $$\det_{K[Z]}\, (Z{\rm Id} -\phi^{(N)}_t\mid_{H(\phi^{(N)})})\in \mathbb F_p[Z, \theta],$$ and therefore ($P$ is a monic irreducible polynomial of degree $d$):
$$\det_K\, (\theta^{p^d}{\rm Id} -\phi^{(N)}_t\mid_{H(\phi^{(N)})})\equiv \det_K\, (\theta{\rm Id} -\phi^{(N)}_t\mid_{H(\phi^{(N)})})\pmod{P}.$$
Let's observe that Theorem \ref{TheoremATR-BC} implies the following (see \cite{AP}, page 248):
$$\sum_{d\geq 0}\sum_{a\in A_{+,d}}\frac{(a')^N}{a}\not =0,$$
where $a'$ denotes the derivative of $a$ and $N\equiv 1\pmod{q-1}.$ In the appendix of this paper, we discuss a digit principle for such Euler type  sums.\par
We mention that the construction of Kaneko-Zagier's objects in the positive characteristic world is the subject of a forthcoming work of F. Pellarin and R. Perkins (\cite{PP}), they prove, in this context,  that the algebra of finite multiple zeta values is non-trivial. In this situation, it would be very interesting to examine the validity of Conjecture \ref{ConjectureKAZ} for Bernoulli-Goss numbers (see \cite{AO} for a special case).\par
\section{Proof of Theorem \ref{TheoremATR-BC}}
\subsection{Notation}
Let $\mathbb F_q$ be a finite field having $q$ elements and let $p$ be the characteristic of $\mathbb F_q.$ Let $\theta $ be an indeterminate over $\mathbb F_q$ and let $A= \mathbb F_q[\theta], K=\mathbb F_q(\theta), K_\infty=\mathbb F_q((\frac{1}{\theta})).$ Let $\mathbb C_\infty$ be the completion of a fixed algebraic closure of $K_\infty$.  Let $v_\infty: \mathbb C_\infty \rightarrow \mathbb Q\cup\{+\infty\}$ be the valuation on $\mathbb C_\infty$ normalized such that $v_\infty(\theta)=-1.$ For $d\in \mathbb N, $ let $A_{+,d}$ be the set of monic elements in  $A$ of degree $d.$\par

\subsection{The $L$-series $L_N(t)$}${}$\par

Let $N\geq 1$ be an integer. Let $t$ be an indeterminate over $\mathbb C_\infty.$ Let $\mathbb T_t$ be the Tate algebra in the variable $t$ with coefficients in $\mathbb C_\infty.$  Let's set:
$$L_N(t)=\sum_{d\geq 0}\sum_{a\in A_{+,d}}\frac{a(t)^N}{a}\in \mathbb T_t^\times.$$
Then, we can write:
$$L_N(t)=\sum_{i\geq 0} \alpha_{i,N}(t) \theta^{-i}, \, \alpha_{i,N}(t)\in \mathbb F_q[t].$$
Observe that $\alpha_{0,N}(t)=1.$ 
\begin{lemma}\label{LemmaATR1} We have:
$$\forall i\geq 0, \deg_t\alpha_{i,N}(t) \leq N({\rm Max}\{\frac{\log(i)}{\log(q)}, 0\}+[\frac{\ell_q(N)}{q-1}]+1).$$
In particular $L_N(t)$  is an entire function.
\end{lemma}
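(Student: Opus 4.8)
The plan is to estimate the degree in $t$ of the coefficients $\alpha_{i,N}(t)$ by analysing the power sums
$$S_d(N) := \sum_{a \in A_{+,d}} a(t)^N \in \mathbb F_q[t],$$
since $L_N(t) = \sum_{d \geq 0} S_d(N) \theta^{-d} \cdot (\text{correction from } 1/a)$; more precisely, expanding $\tfrac{1}{a} = \theta^{-d}(1 + \text{lower order in } 1/\theta)^{-1}$ for $a \in A_{+,d}$, one gets that $\alpha_{i,N}(t)$ is a sum of finitely many $\mathbb F_q$-linear combinations of the polynomials $S_d(N)$ for $d \leq i$. So it suffices to bound $\deg_t S_d(N)$ uniformly, and then track how the index shift interacts with the $\log i / \log q$ term.

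First I would write $a(t) = a|_{\theta = t}$ and note that for $a \in A_{+,d}$ we may write $a = \theta^d + \sum_{j=0}^{d-1} a_j \theta^j$ with $a_j \in \mathbb F_q$, so $a(t)^N$ is a polynomial in $t$ of degree exactly $Nd$. The crude bound $\deg_t S_d(N) \leq Nd$ is too weak; the key point is a vanishing phenomenon. Expand $a(t)^N$ multinomially in the $a_j$; summing over all $a_j \in \mathbb F_q$ kills every monomial in which some $a_j$ appears to a power not divisible by $q-1$ (and not zero), by the standard fact that $\sum_{x \in \mathbb F_q} x^k$ is $-1$ if $(q-1) \mid k$, $k>0$, and $0$ otherwise. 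Carrying out this bookkeeping — essentially the Carlitz/Lee–Sheats analysis of power sums — shows that the surviving top-degree contributions force at least roughly $d - \lceil \ell_q(N)/(q-1)\rceil$ of the "digit slots" to be filled by the leading $\theta^d$ term rather than by the variable coefficients, which caps $\deg_t S_d(N)$ by $N(\lceil \ell_q(N)/(q-1)\rceil + 1)$ once $d$ is large, and by $Nd \le N(\tfrac{\log i}{\log q}+\cdots)$-type terms when $d$ is comparable to the number of digits. I would then combine the two regimes: for $d \leq q^{[\ell_q(N)/(q-1)]+1}$ use $\deg_t S_d(N) \le Nd$, absorbed into the $[\ell_q(N)/(q-1)]+1$ term; for larger $d$, so that $i \ge d$ allows $\log i/\log q \ge \log d/\log q$, use the stabilised bound. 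Assembling these for each $i$ (recalling $\alpha_{i,N}$ involves only $S_d$ with $d \le i$) yields the claimed inequality, and the growth bound $\deg_t \alpha_{i,N}(t) = O(\log i)$ together with $\alpha_{i,N} \in \mathbb F_q[t]$ immediately gives that $L_N(t)$, viewed in $t$, has coefficients that are entire in $\theta^{-1}$; more to the point it shows $L_N$ defines an entire function of the relevant variable since the coefficients grow only logarithmically in degree.

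The main obstacle I anticipate is making the power-sum vanishing argument yield the precise constant $[\ell_q(N)/(q-1)]+1$ rather than a weaker $O(\ell_q(N))$ bound: one must track carefully, across all monomials of near-maximal $t$-degree, exactly how many coordinate slots $a_j$ are forced to carry an exponent divisible by $q-1$, and this is where the base-$q$ digit structure of $N$ enters (writing $N = \sum n_l q^l$ and distributing the exponent $N$ among the digit slots of $a$). A secondary technical point is handling the geometric-series correction from $1/a = \theta^{-d}\sum_{k\ge 0}(-\sum_j a_j\theta^{j-d})^k$: one must check that incorporating these extra factors of the $a_j$ only shifts the index $i$ and does not spoil the degree bound, which follows because each additional factor either is killed by the $\mathbb F_q$-sum or contributes a controlled amount to the digit count. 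Once these two bookkeeping steps are in place, the conclusion about entireness is formal.
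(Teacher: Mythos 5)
There is a genuine gap, and it sits exactly at the point you flag as the ``main obstacle''. After expanding $\frac1a=\theta^{-d}\sum_{\mathbf k}C_{\mathbf k}a^{\mathbf k}\theta^{-w(\mathbf k)}$, the coefficient $\alpha_{i,N}(t)$ is a combination of \emph{twisted} power sums $\sum_{a\in A_{+,d}}a(t)^N a^{\mathbf k}$ with $\ell(\mathbf k)+w(\mathbf k)=i$, not of the plain sums $S_d(N)=\sum_a a(t)^N$, and your claim that the extra factors $a_j^{k_j}$ are ``either killed by the $\mathbb F_q$-sum or contribute a controlled amount to the digit count'' fails: if $k_n$ is a \emph{positive multiple} of $q-1$ then $\sum_{a_n\in\mathbb F_q}a_n^{k_n}=-1\neq 0$ while no base-$q$ digit of $N$ is consumed, so the slot $n$ may carry $m_n=0$. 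Taking $\mathbf k=(q-1,\dots,q-1)\in\mathbb N^d$ and the monomial with $m_d=N$, the twisted sum has $t$-degree exactly $dN$ for \emph{arbitrarily large} $d$; it contributes to $\alpha_{i,N}$ with $i=d+(q-1)\tfrac{d(d+1)}{2}$, so a term-by-term degree bound can never do better than $\deg_t\alpha_{i,N}\lesssim N\sqrt{2i/(q-1)}$, which is far weaker than the asserted $N(\log_q i+u+1)$. The lemma is therefore forced to rely on \emph{cancellation between different $\mathbf k$ at the same level $i$}, which your plan never engages with. A second, related slip is in the assembly: for $d\le q^{u+1}$ the crude bound $Nd$ is of size $Nq^{u+1}$, which is not absorbed by $N(u+1)$; and knowing only that $\alpha_{i,N}$ involves levels $d\le i$ gives $\deg_t\alpha_{i,N}\le Ni$, with no logarithm.

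The paper's proof produces exactly the missing cancellation, packaged differently. It writes $\frac1a=\theta^{-d}\langle a\rangle_\infty^{-1}$ and approximates $\langle a\rangle_\infty^{-1}$ by $\langle a\rangle_\infty^{y_m}$ with $y_m=\sum_{n=0}^m(q-1)q^n$, so that the error has valuation $\ge q^{m+1}$; since $a(t)^Na^{y_m}$ is a specialization of $a(t_1)\cdots a(t_s)$ with $s=\ell_q(N)+(m+1)(q-1)$ variables, the identity $\sum_{a\in A_{+,d}}a(t_1)\cdots a(t_s)=0$ for $d(q-1)>s$ kills the whole packet of twisted sums at once, yielding $v_\infty(S_d)\ge d+q^{d-u-1}$ for $d\ge u+2$ with $u=[\ell_q(N)/(q-1)]$. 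Hence level $d$ can only contribute to $\alpha_{i,N}$ when $i\ge q^{d-u-1}$, i.e.\ $d\le\log_q i+u+1$, and the bound follows from $\deg_tS_d=dN$. If you want to salvage your approach you would need to prove directly that $\sum_{w(\mathbf k)=i-d}C_{\mathbf k}\sum_a a(t)^Na^{\mathbf k}$ vanishes identically for $i<d+q^{d-u-1}$; that statement is equivalent to the paper's valuation estimate and is not accessible by bounding the summands one at a time.
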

\begin{proof}  Let $u=[\frac{\ell_q(N)}{q-1}]\in  \mathbb N.$ This Lemma is a consequence of the proof of \cite{AP}, Lemma 7. We give a proof for the convenience of the reader. We will use the following elementary fact (\cite{AP}, Lemma 4):\par
\noindent Let $s\geq 1$ be an integer and let $t_1, \ldots, t_s$ be $s$ indeterminate over $\mathbb F_p.$ If $d(q-1)>s$ then $\sum_{a\in A_{+,d}} a(t_1)\cdots a(t_s)=0.$\par  
\noindent If $a$ is a monic polynomial in $A,$ we will set:
$$<a>_\infty=\frac{a}{\theta^{\deg_\theta a}}\in 1+\frac{1}{\theta}\mathbb F_q[[\frac{1}{\theta}]].$$
Let $S_d:=\sum_{a\in A_{+,d}}\frac{a(t)^N}{a}.$ Observe that:
$$\deg_tS_d= dN.$$ We have :
$$S_d=\frac{1}{\theta^d} \sum_{a\in A_{+,d}}a(t)^N<a>_\infty^{-1}.$$
Observe that ($p$-adically) $-1=\sum_{n\geq 0} (q-1)q^{n}.$ For $m\geq 0,$ set:
$$y_m=\sum_{n=0}^{m} (q-1) q^{n}.$$
Then:
$$y_m\equiv -1\pmod{q^{m+1}}, \ \ \ell_q(y_m)= (m+1)(q-1).$$
Therefore:
$$v_\infty(\sum_{a\in A_{+,d}}a(t)^N<a>_\infty^{-1}-\sum_{a\in A_{+,d}}a(t)^N<a>_\infty^{y_m})\geq q^{m+1},$$
where $v_\infty$ is the $\infty$ -adic valuation on $\mathbb T_t$ such that $v_\infty (\theta)=-1.$
Thus, if $\ell_q(N)+ (m+1)(q-1)<d(q-1),$ we get:
$$\sum_{a\in A_{+,d}}a(t)^Na^{y_m}=0.$$
 We therefore get, if $d\geq u+2$ :
$$v_\infty(S_d)\geq d+ q^{d-u-1}.$$
This implies that $L_N(t)$ is an entire function.
Let $j$ such that $t^j$ appears in $\alpha_{i,N}(t).$ Let $x=[\frac{j}{N}].$ Let $d$ be minimal such that $t^j$ comes from $S_d.$ We must have  $d\geq x$ and $j\leq dN.$ Furthermore, if $d\geq u+2,$ we have:
$$i\geq d+ q^{d-u-1}.$$
In particular:
$$i\geq d+q^{d-u-1}\geq q^{d-u-1}.$$
Therefore:
$$d\leq {\rm Max}\{\frac{\log(i)}{\log(q)}, 0\}+u+1.$$
\end{proof}

\subsection{The two variable polynomial $B_N(t, \theta)$}\label{BN}${}$\par
Let $N\geq 2, $ $N\equiv 1\pmod{q-1}.$ If $\ell_q(N)=1,$ we set $B_N(t,\theta)=1.$ Let's assume that $\ell_q(N)\not =1$ and let's set $s=\ell_q(N) \geq 2.$ Let $t_1, \ldots, t_s$ be $s$ indeterminates over $\mathbb C_\infty.$ Let $\mathbb T_s$ be the Tate algebra in the indeterminates $t_1, \ldots, t_s$ with coefficients in $\mathbb C_\infty.$ Let $\tau: \mathbb T_s\rightarrow \mathbb T_s$ be the continuous morphism of $\mathbb F_q[t_1, \ldots, t_s]$-algebras such that $\forall c\in \mathbb C_\infty, \tau (c)=c^q.$  For $i=1, \ldots , s,$ we set:
$$\omega(t_i)=\lambda_\theta\prod_{j\geq 0} (1-\frac{t_i}{\theta^{q^j}})^{-1} \in \mathbb T_s,$$
where $\lambda_\theta$ is a fixed $(q-1)$th-root of $-\theta $ in $\mathbb C_\infty.$ Set:
$$\widetilde{\pi} = \lambda_\theta \theta \prod_{j\geq 1} (1-\theta^{1-q^j})^{-1}.$$
Set:
$$L_s=\sum_{d\geq 0} \sum_{a\in A_{+,d}}\frac{a(t_1)\cdots a(t_s)}{a} \in \mathbb T_s^\times.$$
We also set:
$$\mathbb B_s=(-1) ^{\frac{s-1}{q-1}} \frac{ L_s \omega(t_1)\ldots \omega(t_s)}{\widetilde{\pi}}\in \mathbb T_s.$$
Then, by \cite{APT}, Lemma 7.6 (see also \cite{AP}, Corollary 21), $\mathbb B_s\in \mathbb F_q[t_1, \ldots, t_s, \theta]$ is a monic polynomial in $\theta$ of degree $r=\frac{s-q}{q-1}.$ Write $N=\sum_{n=1}^{\ell_q(N)} q^{e_n}, \, e_1\leq e_2\leq \cdots \leq e_{\ell_q(N)}.$
We set:
$$B_N(t, \theta) = \mathbb B_s\mid_{t_i=t^{q^{e_i}}}\in \mathbb F_q[t, \theta].$$
We observe that $B_N(t, \theta)$ is a monic polynomial in $\theta$ such that $\deg_\theta B_N(t, \theta)=r.$\par
\begin{lemma}\label{LemmaATR5}
Let $N\geq 2,$ $N\equiv 1\pmod{q-1}.$ Then:\par
\noindent 1) $B_N(t^p, \theta^p)= B_N(t, \theta)^p.$\par
\noindent 2) $B_{qN}(t, \theta)=B_N(t^q, \theta).$\par
\noindent 3) We have:
$$B_N(t, \theta)\equiv (\theta-t)^r-r(t^q-t)(\theta-t)^{r-1} \pmod{(t^q-t)^2 \mathbb F_p[t, \theta]}.$$
\noindent 4) If $N\equiv 0\pmod{p},$ then $B_N(t, \theta)\in \mathbb F_p[t^p, \theta].$
\end{lemma}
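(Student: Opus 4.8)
The plan is to derive all four statements from the structure of $\mathbb B_s=(-1)^{\frac{s-1}{q-1}}\frac{L_s\,\omega(t_1)\cdots\omega(t_s)}{\widetilde\pi}$, together with Pellarin's identity $L_1(t):=\sum_{a\in A,\ \mathrm{monic}}\frac{a(t)}{a}=\frac{\widetilde\pi}{(\theta-t)\,\omega(t)}$ and the functional equation $\tau\,\omega(t)=(t-\theta)\,\omega(t)$. \emph{Part 1.} I would first strengthen ``$\mathbb B_s\in\mathbb F_q[t_1,\dots,t_s,\theta]$'' to ``$\mathbb B_s\in\mathbb F_p[t_1,\dots,t_s,\theta]$'' by descent along $\mathbb F_q/\mathbb F_p$: since $s=\ell_q(N)\equiv N\equiv1\pmod{q-1}$ one has $\lambda_\theta^{\,s-1}=(-\theta)^{(s-1)/(q-1)}$, so cancelling $\lambda_\theta$ rewrites $\mathbb B_s$ as $L_s$ times a factor over $\mathbb F_p$, and the coefficientwise Frobenius $\mathrm{Fr}\colon x\mapsto x^p$ (fixing $t_1,\dots,t_s,\theta$) fixes $L_s$ because $a\mapsto\mathrm{Fr}(a)$ permutes $A_{+,d}$ and $\tfrac{a(t_1)\cdots a(t_s)}{a}\mapsto\tfrac{\mathrm{Fr}(a)(t_1)\cdots\mathrm{Fr}(a)(t_s)}{\mathrm{Fr}(a)}$, merely reindexing the sum; a polynomial over $\mathbb F_q$ that is $\mathrm{Fr}$-invariant lies over $\mathbb F_p$, so $B_N\in\mathbb F_p[t,\theta]$ and $B_N(t^p,\theta^p)=B_N(t,\theta)^p$ follows. \emph{Part 2} is immediate from the construction: $\ell_q(qN)=s$ and the exponents of $qN$ are $e_1+1\le\cdots\le e_s+1$, whence $B_{qN}(t,\theta)=\mathbb B_s|_{t_i=(t^q)^{q^{e_i}}}=B_N(t^q,\theta)$ (both sides $=1$ if $s=1$).

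\emph{Part 3} (read for $s=\ell_q(N)\ge2$, so $r\ge0$). Since $(\theta-t^q)^r=\bigl((\theta-t)-(t^q-t)\bigr)^r\equiv(\theta-t)^r-r(t^q-t)(\theta-t)^{r-1}\pmod{(t^q-t)^2}$, it is equivalent to prove $B_N\equiv(\theta-t^q)^r\pmod{(t^q-t)^2}$; and as $(t^q-t)^2=\prod_{c\in\mathbb F_q}(t-c)^2$ with pairwise coprime factors, it suffices to show that $G:=B_N-(\theta-t^q)^r$ and $\partial_tG$ both vanish at $t=c$ for every $c\in\mathbb F_q$. For such $c$ one has $c^{q^j}=c$, so $B_N(c,\theta)=\mathbb B_s(c,\dots,c,\theta)$; moreover $a(c)\in\mathbb F_q$ forces $a(c)^s=a(c)$, hence $L_s(c,\dots,c)=L_1(c)=\widetilde\pi/((\theta-c)\omega(c))$ by Pellarin, while evaluating $\tau\omega(t)=(t-\theta)\omega(t)$ at $t=c$ gives $\omega(c)^{q-1}=c-\theta$; substituting,
\[
\mathbb B_s(c,\dots,c,\theta)=(-1)^{\frac{s-1}{q-1}}\frac{\omega(c)^{\,s-1}}{\theta-c}=(\theta-c)^{\frac{s-1}{q-1}-1}=(\theta-c)^r=(\theta-c^q)^r ,
\]
so $G(c,\theta)=0$. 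For the derivative, $\partial_t(\theta-t^q)^r=0$ in characteristic $p$, while $\partial_tB_N=\sum_{i:\,e_i=0}\partial_{t_i}\mathbb B_s(t^{q^{e_1}},\dots,t^{q^{e_s}},\theta)$ since $\tfrac{d}{dt}t^{q^{e_i}}=0$ for $e_i\ge1$; so by the symmetry of $\mathbb B_s$ it remains to show $\partial_{t_1}\mathbb B_s(c,\dots,c,\theta)=0$. Differentiating $\mathbb B_s$ in $t_1$ and evaluating on the diagonal, this amounts to $(\partial_{t_1}L_s)(c,\dots,c)\,\omega(c)+L_1(c)\,\omega'(c)=0$; one computes $(\partial_{t_1}L_s)(c,\dots,c)=\sum_{a(c)\ne0}\tfrac{a'(c)}{a}=L_1'(c)-\tfrac{L_1(c)}{\theta-c}$ (the missing terms $\sum_{(\theta-c)\mid a}\tfrac{a'(c)}{a}=\tfrac{L_1(c)}{\theta-c}$ coming from the bijection $a=(\theta-c)b$), so the expression equals $\bigl(L_1'\omega+L_1\omega'\bigr)(c)-\tfrac{(L_1\omega)(c)}{\theta-c}=\tfrac{\widetilde\pi}{(\theta-c)^2}-\tfrac{\widetilde\pi}{(\theta-c)^2}=0$ using Pellarin's identity and its $t$-derivative. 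Hence $(t^q-t)^2\mid G$, proving part 3.

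\emph{Part 4.} If $p\mid N$, then since $q^{e_n}\equiv0\pmod p$ for $e_n\ge1$, the $0$-th base-$q$ digit $n_0=\#\{n:e_n=0\}$ of $N$ is divisible by $p$. After grouping, $B_N$ arises from the symmetric polynomial $\mathbb B_s\in\mathbb F_p[t_1,\dots,t_s,\theta]$ (part 1) by setting $t_1=\cdots=t_{n_0}=t$ and the remaining variables to powers of $t^q$, hence of $t^p$. Writing the part symmetric in $t_1,\dots,t_{n_0}$ through elementary symmetric functions, the first specialization sends $e_j\mapsto\binom{n_0}{j}t^j$, and Lucas' theorem gives $\binom{n_0}{j}\equiv0\pmod p$ unless $p\mid j$; so only terms with $p\mid j$ survive, each equal to $\binom{n_0}{j}(t^p)^{j/p}$, and the partial specialization already lies in $\mathbb F_p[t^p,t_{n_0+1},\dots,t_s,\theta]$. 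Substituting the remaining powers of $t^p$ then gives $B_N\in\mathbb F_p[t^p,\theta]$.

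\emph{Main obstacle.} Parts 1, 2 and 4 are in essence bookkeeping with digit expansions, Frobenius, and symmetric functions. The delicate point is the vanishing $\partial_{t_1}\mathbb B_s(c,\dots,c,\theta)=0$ in part 3: it forces one to differentiate the non-polynomial factors $L_s$ and $\omega$ (with due care about the analytic framework in which these live), to identify the diagonal derivative $(\partial_{t_1}L_s)(c,\dots,c)$ with a one-variable quantity, and crucially to invoke the \emph{derivative} of Pellarin's identity to obtain the cancellation. I expect the clean organization of this cancellation — together with the reformulation $B_N\equiv(\theta-t^q)^r\pmod{(t^q-t)^2}$, which is what makes the linear term in $(t^q-t)$ uniform in $N$ — to be where the real work lies.
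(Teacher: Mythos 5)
Your proposal is correct; all four parts go through. The skeleton of your part 3 is the same as the paper's: reduce the congruence modulo $(t^q-t)^2$ to Hermite interpolation at the points $\zeta\in\mathbb F_q$, compute the value $B_N(\zeta,\theta)=(\theta-\zeta)^r$ from Pellarin's identity together with $\omega(\zeta)^{q-1}=\zeta-\theta$, and then show $\tfrac{d}{dt}B_N(t,\theta)\mid_{t=\zeta}=0$. Where you genuinely diverge is in that last step. The paper first writes $\tfrac{d}{dt}B_N$ as $n_0$ times a prefactor times a sum whose value at $t=\zeta$ does not depend on $N$ (for $N\equiv1\pmod{q-1}$, $N\geq2$), and then gets its vanishing \emph{for free} from the normalization $B_{N'}(t,\theta)=1$ when $\ell_q(N')=q$: since that particular $B_{N'}$ is constant, the universal bracket must vanish at every $\zeta$. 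You instead prove the vanishing of $\partial_{t_1}\mathbb B_s(\zeta,\dots,\zeta,\theta)$ head-on, identifying $(\partial_{t_1}L_s)(\zeta,\dots,\zeta)=L_1'(\zeta)-L_1(\zeta)/(\theta-\zeta)$ via the bijection $a=(\theta-\zeta)b$ and then invoking the derivative of $L_1\omega=\widetilde\pi/(\theta-t)$; this is more work but more self-contained, and it makes the cancellation transparent rather than a consequence of a normalization. Your part 4 also differs: the paper reads it off the global factor $n_0$ in $\tfrac{d}{dt}B_N$ (zero derivative in characteristic $p$ means a polynomial in $t^p$), while you argue through elementary symmetric functions and Lucas' theorem; both are valid, though note that your own chain-rule identity $\partial_tB_N=n_0\,\partial_{t_1}\mathbb B_s(\dots)$ from part 3 already gives part 4 immediately. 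Parts 1 and 2 match the paper's argument, with your Frobenius-descent making explicit the observation the paper states without proof, namely that the $L$-series has coefficients in $\mathbb F_p$.
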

\begin{proof} Recall that:
$$ B_N(t, \theta) =\frac{(-1)^{\frac{\ell_q(N)-1}{q-1}}}{\widetilde{\pi}}\sum_{d\geq 0} \sum_{a\in A_{+,d}} \frac{a(t)^N}{a}\, \prod_{l=0}^k\omega (t^{q^l})^{n_l},$$
where $N=\sum_{l=0}^k n_l q^l,$ $n_0, \ldots, n_k\in \{ 0, \ldots q-1\}.$ Observe that:
$$\sum_{d\geq 0} \sum_{a\in A_{+,d}} \frac{a(t)^N}{a} \in \mathbb F_p[t][[\frac{1}{\theta}]].$$
Thus:
$$B_N(t, \theta)\in \mathbb F_p[t, \theta].$$
Thus we get assertion 1). Assertion 2) is a consequence of the definition of $B_N(t, \theta).$ Let $\zeta \in \mathbb F_q.$ By \cite{AP2}, theorem 2.9, we have:
$$\omega(t)\mid_{t=\zeta}= \exp_C(\frac{\widetilde{\pi}}{\theta-\zeta}),$$
where $\exp_C: \mathbb C_\infty \rightarrow \mathbb C_\infty$ is the Carlitz exponential (\cite{GOS}, chapter 3, paragraph 3.2). Now, by \cite{PEL} Theorem 1, we get:
$$\sum_{d\geq 0} \sum_{a\in A_{+,d}} \frac{a(t)^N}{a}\mid_{t=\zeta}=\sum_{d\geq 0} \sum_{a\in A_{+,d}} \frac{a(\zeta)}{a}= \frac{\widetilde {\pi}} {(\theta-\zeta) \exp_C(\frac{\widetilde{\pi}}{\theta-\zeta})}.$$
But observe that:
$$\exp_C(\frac{\widetilde{\pi}}{\theta-\zeta})^{q-1}= -(\theta-\zeta).$$
Thus:
$$B_N(t, \theta)\mid_{t=\zeta}= (\theta-\zeta)^r.$$
Observe that $\frac{d}{dt} B_N(t, \theta) $ is equal to:\par
$$\frac{n_0(-1)^{r+1}}{\widetilde{\pi}}\left(\prod_{l=0}^k\omega (t^{q^l})^{-n_l}\right)\sum_{d\geq 0} \sum_{a\in A_{+,d}}\left( \frac{\frac{d}{dt} (a(t)) a(t)^{N-1}}{a}-\frac{a(t)^N}{a}\, \frac{\frac{d}{dt} (\omega (t))}{ \omega(t)}\right).$$
Thus we get  assertion 4).
\noindent Since for $\ell_q(N)=q,$ we have $B_N(t)=1.$ We get:
$$\forall \zeta\in \mathbb F_q, \frac{d}{dt} B_N(t, \theta) \mid_{t=\zeta}=0.$$
This concludes the proof of the Lemma.
\noindent 
\end{proof}
\begin{lemma}\label{LemmaATR6} Let $N\geq 2, N\equiv 1\pmod{q-1}.$ Then $\deg_tB_N(t, \theta)\geq p$  if $r\geq 1$ and the total degree in $t, \theta$ of $B_N(t, \theta)$ is less than or equal to  ${\rm Max}\{ rN+r-2, 0\}.$ Furthermore $B_N(t, \theta)$ (as a polynomial in $t$) is a primitive polynomial.
\end{lemma}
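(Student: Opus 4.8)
My plan is to prove the three assertions separately, relying on two things already available: the congruence of Lemma~\ref{LemmaATR5}(3), and the ``$L$-series form'' of $B_N$ extracted from the proof of Lemma~\ref{LemmaATR5}. For the latter, starting from $B_N=\frac{(-1)^{r+1}}{\widetilde\pi}L_N(t)\prod_{l}\omega(t^{q^l})^{n_l}$ and using $\widetilde\pi=\lambda_\theta\theta\prod_{j\ge1}(1-\theta^{1-q^j})^{-1}$, $\omega(t_i)=\lambda_\theta\prod_{j\ge0}(1-t_i\theta^{-q^j})^{-1}$, $\lambda_\theta^{q-1}=-\theta$ and $\frac{\ell_q(N)-1}{q-1}=r+1$, I would write
\[
B_N(t,\theta)=L_N(t)\,\theta^{r}\,\rho(t,\theta),\qquad
\rho(t,\theta)=\prod_{j\ge1}\bigl(1-\theta^{1-q^j}\bigr)\prod_{l=0}^{k}\prod_{j\ge0}\bigl(1-t^{q^l}\theta^{-q^j}\bigr)^{-n_l},
\]
where $\rho$ is a power series in $\theta^{-1}$ with constant term $1$ whose coefficient of $\theta^{-m}$ has $t$-degree at most $q^{k}m$ (only the index $l=k$, $j=0$ contributes to the top $t$-degree). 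I will repeatedly use that $q^{k}\le N-1$ when $r\ge1$: then $\ell_q(N)=r(q-1)+q\ge 2q-1\ge3$, so $N$ is not a power of $q$.

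For primitivity and for $\deg_tB_N\ge p$ I only need to inspect $B_N$ modulo powers of $t^q-t$. Evaluating the identity $B_N(\zeta,\theta)=(\theta-\zeta)^{r}$ (established inside the proof of Lemma~\ref{LemmaATR5}) at $\zeta=0$ gives $B_N(0,\theta)=\theta^{r}$, so the content of $B_N\in\mathbb F_q[\theta][t]$ divides $\theta^{r}$, and primitivity reduces to $B_N(t,0)\ne0$; but specialising $\theta=0$ in Lemma~\ref{LemmaATR5}(3) and reducing modulo $t^q-t$ gives $B_N(t,0)\equiv(-1)^{r}t^{r}\not\equiv0\pmod{t^q-t}$ (the case $r=0$, where $B_N=1$, being trivial). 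For the lower bound assume $r\ge1$ and suppose, for contradiction, $\deg_tB_N<p\ (\le q)$. If $p\nmid r$, extracting the $\theta^{r-1}$-coefficient of the congruence shows that coefficient of $B_N$ is $\equiv-rt^{q}\pmod{(t^q-t)^2}$; having $t$-degree $<p<2q$ it must then equal $-rt^{q}$, of $t$-degree $q\ge p$, a contradiction. If $p\mid r$, the congruence becomes $B_N\equiv(\theta-t)^{r}\pmod{(t^q-t)^2}$, so $B_N(t,0)$ and $(-1)^{r}t^{r}$ agree to order $2$ at every $\zeta\in\mathbb F_q$; since $p\mid r$ this forces $\frac{d}{dt}B_N(t,0)$ to vanish on all of $\mathbb F_q$, hence (its $t$-degree being $<q$) to be $0$, so $B_N(t,0)$ is a $p$-th power of $t$-degree $<p$, i.e.\ a constant $e\in\mathbb F_q$; but $e=B_N(\zeta,0)=(-1)^{r}\zeta^{r}$ for all $\zeta$ gives $0=e=(-1)^{r}$, which is absurd.

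The total-degree bound is where the real work lies. Writing $L_N=\sum_{d\ge0}S_d$ with $S_d=\sum_{a\in A_{+,d}}a(t)^N/a$, I would observe: $a(t)^N$ has $t$-degree $\le dN$ and no $\theta$, while $1/a=\theta^{-d}(1+O(\theta^{-1}))$ has constant $\mathbb F_q$-coefficients; hence every $\theta^{-i}$-coefficient of $S_d$ has $t$-degree $\le dN$, and its top one — the $\theta^{-d}$-coefficient, equal to $\sum_{a\in A_{+,d}}a(t)^N$ — has $t$-degree $\le dN-1$ for $d\ge1$, because its leading coefficient is $|A_{+,d}|=q^{d}\equiv0\pmod p$. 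Multiplying out $B_N=\theta^{r}\bigl(\sum_dS_d\bigr)\rho$ and isolating the coefficient of $\theta^{r-n}$ for $1\le n\le r$ — a finite sum of products (a $\theta^{-i}$-coefficient of some $S_d$, with $d\le i\le n$) $\times$ (a $\theta^{-(n-i)}$-coefficient of $\rho$) — one bounds its $t$-degree by $nN-1$, using $q^{k}\le N-1$ and the degree drop just noted. Together with the leading term $\theta^{r}$ this gives $\deg_{t,\theta}B_N\le\max\{r,\ \max_{1\le n\le r}(r-n+nN-1)\}=rN-1\le rN+r-2$ for $r\ge1$, while for $r=0$ one has $B_N=1$ and $0=\max\{-2,0\}$.

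The steps I expect to be the main obstacles are two. First, making the factorisation and the coefficient-wise degree bookkeeping rigorous: the $t$-degrees of the coefficients of $L_N$ are unbounded (by Lemma~\ref{LemmaATR1} they grow only like $\log i$), so one must work $\theta^{-1}$-adically in a suitable complete ring and argue degree by degree. Second, keeping the sharp constant: the target $rN+r-2$ beats the crude estimate $rN$ only for $r\ge2$, so at $r=1$ one genuinely needs the ``$-1$'' gained from $q^{d}\equiv0\pmod p$; a careless estimate would give only $\deg_{t,\theta}B_N\le rN=N$ there.
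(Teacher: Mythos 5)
Your proof is correct, and for its main part it takes a genuinely different route from the paper's. For primitivity and for $\deg_tB_N(t,\theta)\geq p$ you and the paper both work from the congruence of Lemma \ref{LemmaATR5}: your primitivity argument (the content divides $B_N(0,\theta)=\theta^r$ and $B_N(t,0)\not\equiv 0\pmod{t^q-t}$) is a minor variant of the paper's (which notes the content divides both $\theta^r$ and $B_N(1,\theta)=(\theta-1)^r$), and your two-case analysis for the lower bound fills in completely a step the paper dispatches in one line. The total-degree bound, however, is obtained differently. The paper bounds $\deg_tB_N$ by a valuation argument on the roots in $t$: the Euler product shows $B_N(t,\theta)\mid_{t=x}\neq 0$ whenever $v_\infty(x)>\frac{-1}{N}$, so every root has valuation $\leq \frac{-1}{N}$, and comparing the product of the roots with $B_N(0,\theta)=\theta^r$ gives $\deg_tB_N\leq (r-\deg_\theta\lambda)N\leq rN$; monicity in $\theta$ then yields the total-degree bound. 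You instead expand $B_N=\theta^rL_N\rho$ coefficient by coefficient in $\frac{1}{\theta}$ inside $\mathbb F_q[t][[\frac{1}{\theta}]]$ (which is unproblematic: each coefficient of the product is a finite sum, so no completion subtleties arise), gaining one degree from $|A_{+,d}|=q^d\equiv 0\pmod{p}$ and controlling $\rho$ via $q^k\leq N-1$. Your method is more computational but gives the sharper bound $rN-1$, consistent with what the paper later proves for $q$-minimal $N$; the paper's method is what produces the fact, reused later (e.g.\ in Corollary \ref{CorollaryATR2}), that all $t$-roots of $B_N(t,\theta)$ have valuation $\leq\frac{-1}{N}$, which your route does not recover. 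The only step worth spelling out is the $d=0$ term in your final sum, where the degree drop from $q^d\equiv 0$ is unavailable and one uses $\deg_t[\theta^{-n}]\rho\leq q^kn\leq nN-n\leq nN-1$ instead; this is already implicit in your bookkeeping, so it is a presentational point rather than a gap.
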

\begin{proof} Recall that if $r=0$ then $B_N(t, \theta)=1.$ Let's assume that $r\geq 1.$  Observe that by Lemma \ref{LemmaATR5},we have:
$$ B_N(t, 0) \equiv -(-t)^{r-1}(t+r(t^q-t))\pmod{(t^q-t)^2\mathbb F_p[t]}.$$
In particular $\deg_tB_N(t, \theta)\geq p.$  Let $x\in \mathbb C_\infty$ such that $v_\infty(x)>\frac{-1}{N}.$ Then:
$$\sum_{d\geq}\sum_{a\in A_{+,d} }\frac{a(x)^N}{a} =\prod_{P{\rm \, monic \, irreducible\, in\, } A}(1-\frac{P(x)^N}{P})^{-1}\in \mathbb C_\infty ^\times.$$
Write $N=\sum_{l=0}^k n_lq^l, n_l\in \{0, \ldots, q-1\}, n_k\not =0.$ For $l=0, \ldots, k,$ we have: 
$$\omega(t^{q^l})\mid_{t=x}\in \mathbb C_\infty^\times.$$
Therefore:
$$B_N(t, \theta)\mid_{t=x}\not =0.$$
This implies that, if $x \in \mathbb C_\infty$ is  a root of $B_N(t, \theta)$ then $v_\infty (x) \leq \frac{-1}{N}<0.$   Write in $\mathbb C_\infty[t]:$
$$B_N(t, \theta)= \lambda \prod_{j=1}^m(t-x_j), \lambda\in \mathbb F_p[\theta]\setminus\{0\} , \deg_\theta \lambda \leq r-1, x_1, \ldots, x_m \in \mathbb  C_\infty, $$
where $m=\deg_t \beta_N(t, \theta).$
Then:
$$\theta^r= (-1)^m \lambda \prod_{j=1}^m x_j.$$
Therefore:
$$\deg_\theta \lambda -r\leq \frac{-m}{N}.$$
We finally get:
$$\deg_tB_N(t, \theta) \leq (r-\deg_\theta \lambda)N\leq rN.$$
Since $B_N(t, \theta)$ is a monic polynomial in $\theta,$ the total degree in $t, \theta$ of $B_N(t, \theta)$ is less than or equal to  $\deg_tB_N(t, \theta)+r-2.$\par
\noindent Write:
$$B_N(t, \theta)= \alpha F, \alpha \in \mathbb F_p[\theta]\setminus\{0\},$$
where $F$ is a primitive polynomial (as a polynomial in $t$). In particular $\alpha$ must divide $\theta^r$ and $ B_N(1, \theta)$ in $\mathbb F_p[\theta].$ By Lemma \ref{LemmaATR5}, this implies that $\alpha \in \mathbb F_p^\times.$ 
\end{proof}
\begin{remark}  Let $f(\theta)$ be a monic irreducible polynomial in $\mathbb F_p[\theta].$  Let $P_1, \ldots, P_m$ be the monic irreducible polynomials in $A$ such that $f(\theta)=P_1\cdots P_m .$  We can order them such that if $d=\deg_\theta f,$ then for $i=1, \ldots, m,$ we have:
$$P_i=\sigma^{i-1} (P_1),$$
where $\sigma: A \rightarrow A$ is the morphism of $\mathbb F_p[\theta]$-algebras such that $\forall x\in \mathbb F_q,$ $\sigma(x)=x^p,$ and $m=[\mathbb F_{p^d} \cap \mathbb F_q: \mathbb F_p].$ Let $N\geq 1,$ and let's set:
$$\chi_N(f)=\prod_{i=1}^m(P_i(\theta)-P_i(t)^N)- f(\theta)\in \mathbb F_p[t, \theta].$$
We have:
$$\deg_t\chi_N(f)= N\deg_\theta f,$$
$$\deg_\theta \chi_N(f)\leq \deg_\theta f-\deg_\theta P_1\leq \deg_\theta (f) (1- \frac{\log(p)}{\log(q)}).$$

If $f_1, \ldots, f_n$ are $n$ irreducible monic polynomials in $\mathbb F_p[\theta],$ we set:
$$\chi_N(f_1\cdots f_m)=\prod_{l=1}^n \chi_N(f_l)\in \mathbb F_p[t, \theta].$$
Thus in $\mathbb F_p[t]((\frac{1}{\theta}))$:
$$L_N(t)=\sum_{d\geq 0} \sum_{a\in \mathbb F_p[\theta]_{+,d}}\frac{\chi_N(a)}{a}.$$
Observe that $\sum_{d\geq 0} \sum_{a\in \mathbb F_p[\theta]_{+,d}}\frac{\chi_N(a)}{a}$ converges on $\{x\in \mathbb C_\infty, v_\infty(x)>\frac{-1}{N}\}$ and does not vanish. Therefore on $\{x\in \mathbb C_\infty, v_\infty(x)>\frac{-1}{N}\}$ :
$$\sum_{d\geq 0} \sum_{a\in \mathbb F_p[\theta]_{+,d}}\frac{\chi_N(a)}{a}=\frac{1}{\theta^{\frac{\ell_q(N)-q}{q-1}}}B_N(t, \theta) \prod_{l=0}^k \prod_{j\geq 0} (1-\frac{t^{q^l}}{\theta^{q^j}})^{n_l},$$
where $N\equiv 1\pmod{q-1}, \ell_q(N)\geq q, N=\sum_{l=0}^k n_l q^l, n_l\in \{0, \ldots, q-1\}, n_k\not =0.$
\end{remark}

Let $s\geq 2,$ $s\equiv 1\pmod{q-1}.$ Recall that we have set:
$$\mathbb B_s=(-1) ^{\frac{s-1}{q-1}} \frac{ L_s \omega(t_1)\ldots \omega(t_s)}{\widetilde{\pi}}\in \mathbb T_s,$$
where $\mathbb T_s$ is the Tate algebra in the indeterminates $t_1, \ldots, t_s$ with coefficients in $\mathbb C_\infty$, and for $i=1, \ldots, s,$ $\omega(t_i)=\lambda_\theta \prod_{j\geq 0} (1-\frac{t_i}{\theta^{q^j}})^{-1}$. 
For $m\in \mathbb N,$ we denote by $BC_m\in K$ the $m$th Bernoulli-Carlitz number (\cite{GOS}, chapter 9, paragraph 9.2). 
\begin{proposition}\label{PropositionATR1} ${}$\par
\noindent 1)Let $N\geq 1, N\equiv 1\pmod{q-1},$ $\ell_q(N)\geq q.$ Recall that $r=\frac{\ell_q(N)-q}{q-1}.$ Let $d\geq 1$ such that $q^d>N, $ then we have the following equality in $\mathbb C_\infty:$
$$\frac{B_N(\theta, \theta^{q^d})}{\prod_{l=0}^k \prod_{n=0, n\not =l}^{d-1} (\theta^{q^l}-\theta^{q^n})^{n_l}}=(-1)^{r} \frac{BC_{q^d-N}}{\Pi(N)\Pi(q^d-N)},$$
where $\Pi(.)$ is the Carlitz factorial, and $N=\sum_{l=0}^k n_l q^l, n_l\in \{0, \ldots, q-1\}$.\par
\noindent 2) Let $N\geq 2, N\equiv 1\pmod{q-1}.$ Let $P$ be a monic irreducible polynomial in $A$ of degree $d\geq 1$ such that $q^d>N.$ Then $BC_{q^d-N} \equiv 0\pmod{P}$ if and only if $B_N(\theta,\theta)\equiv 0\pmod{P}.$
\end{proposition}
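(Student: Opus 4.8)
The plan is to derive part 1 from a Frobenius twist of the identity defining $B_N$, and then to deduce part 2 from it by reduction modulo $P$; the special case $\ell_q(N)=1$ of part 2 needs a short separate argument.

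For part 1, I would start from the identity recalled in the proof of Lemma~\ref{LemmaATR5}, namely (with $N=\sum_l n_lq^l$)
\[
B_N(t,\theta)=\frac{(-1)^{r+1}}{\widetilde{\pi}}\,L_N(t)\prod_{l=0}^k\omega(t^{q^l})^{n_l},
\]
and rewrite it as an identity of entire functions of $t$ (valid on all of $\mathbb C_\infty$) by clearing the $\omega$'s via $\omega(u)^{-1}=\lambda_\theta^{-1}\prod_{j\ge0}(1-u\theta^{-q^j})$. Then I would apply the endomorphism $\tau^d$: on the left $L_N(t)$ becomes $\sum_{m\ge0}\sum_{a\in A_{+,m}}a(t)^N/a^{q^d}$, and on the right $\theta$ is replaced by $\theta^{q^d}$ throughout, so the Euler factors become $\prod_{j\ge d}(1-t^{q^l}\theta^{-q^j})^{n_l}$. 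Since $q^d>N$ forces $d>k$, these twisted factors — equivalently the values $(\tau^d\omega)(\theta^{q^l})$ — are holomorphic and nonzero at $t=\theta$, so one may specialize $t=\theta$; using $\sum_m\sum_a a(\theta)^N/a^{q^d}=\sum_m\sum_a a^{N-q^d}=\zeta_A(q^d-N)$ this yields
\[
B_N(\theta,\theta^{q^d})=(-1)^{r+1}\,\widetilde{\pi}^{-q^d}\,\zeta_A(q^d-N)\prod_{l=0}^k\bigl((\tau^d\omega)(\theta^{q^l})\bigr)^{n_l}.
\]

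The key computational input is the evaluation $(\tau^d\omega)(\theta^{q^l})=-\widetilde{\pi}^{\,q^l}\prod_{n=l+1}^{d-1}(\theta^{q^l}-\theta^{q^n})$ for $0\le l\le k<d$. I would prove it from the factorization $\tau^d\omega(u)=\bigl(\prod_{j=0}^{d-1}(u-\theta^{q^j})\bigr)\omega(u)$ (iterate $\tau\omega=(t-\theta)\omega$): at $u=\theta^{q^l}$ the simple zero of the bracketed product cancels the simple pole of $\omega$, leaving $\prod_{j\ne l,\,j<d}(\theta^{q^l}-\theta^{q^j})$ times $\operatorname{Res}_{u=\theta^{q^l}}\omega(u)$, and the latter equals $-\widetilde{\pi}^{\,q^l}/D_l$ with $D_l=\prod_{j<l}(\theta^{q^l}-\theta^{q^j})$, as one sees by comparing with $\operatorname{Res}_{u=\theta^{q^l}}(\tau^l\omega)(u)=-\widetilde{\pi}^{\,q^l}$ (read off from $\tau^l\omega(u)=\lambda_\theta^{q^l}\prod_{m\ge l}(1-u\theta^{-q^m})^{-1}$ and the product formula for $\widetilde{\pi}$); splitting $\prod_{j\ne l,\,j<d}(\theta^{q^l}-\theta^{q^j})=D_l\prod_{n=l+1}^{d-1}(\theta^{q^l}-\theta^{q^n})$ then gives the stated value. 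Plugging this into the previous display, using $\sum_l q^ln_l=N$ and the formula $\Pi(N)=\prod_l D_l^{n_l}$ for the Carlitz factorial, and invoking $\zeta_A(q^d-N)/\widetilde{\pi}^{\,q^d-N}=BC_{q^d-N}/\Pi(q^d-N)$, produces exactly part 1. The one thing to watch is the accounting of signs: here one uses that $\ell_q(N)\equiv1\pmod{q-1}$ forces $(-1)^{\ell_q(N)}=-1$ in $\mathbb F_p$, so that all the powers of $-1$ collapse to $(-1)^r$. I expect this sign-and-residue bookkeeping to be the only real obstacle.

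For part 2 with $\ell_q(N)\ge q$: since $\deg P=d$ we have $A/P\cong\mathbb F_{q^d}$, hence $\theta^{q^d}\equiv\theta\pmod P$ and therefore $B_N(\theta,\theta^{q^d})\equiv B_N(\theta,\theta)\pmod P$ (as $B_N\in\mathbb F_p[t,\theta]$). Moreover every factor occurring in $\Pi(N)$, in $\Pi(q^d-N)$ (legitimate since $0<q^d-N<q^d$), and in $\prod_{l}\prod_{n\ne l,\,n<d}(\theta^{q^l}-\theta^{q^n})^{n_l}$ is, up to sign, of the form $(\theta^{q^m}-\theta)^{q^b}$ with $1\le m\le d-1$, and no monic irreducible of degree $d$ divides $\theta^{q^m}-\theta$ for $1\le m<d$ (its irreducible factors have degrees dividing $m$); hence all three quantities are units in the local ring $A_{(P)}$. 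Reading the identity of part 1 in $A_{(P)}$ — in which $BC_{q^d-N}$ is integral because $q^d-N\le q^d-2$ — therefore gives $v_P(B_N(\theta,\theta^{q^d}))=v_P(BC_{q^d-N})$, and combined with the congruence this yields $P\mid BC_{q^d-N}\iff P\mid B_N(\theta,\theta)$, which is the assertion. In the remaining case $\ell_q(N)=1$, i.e.\ $N=q^e$ with $e\ge1$ and $d>e$, one has $B_N(t,\theta)=1$, so it remains to check $BC_{q^d-q^e}\not\equiv0\pmod P$; applying the same twist-and-specialize argument to Pellarin's identity $L_1(t)=\widetilde{\pi}/((\theta-t)\omega(t))$ (note $L_{q^e}(t)=L_1(t^{q^e})$) gives
\[
\zeta_A(q^d-q^e)=-\,\widetilde{\pi}^{\,q^d-q^e}\big/\bigl((\theta^{q^{d-e}}-\theta)^{q^e}\textstyle\prod_{n=e+1}^{d-1}(\theta^{q^e}-\theta^{q^n})\bigr),
\]
so $BC_{q^d-q^e}=\Pi(q^d-q^e)\,\zeta_A(q^d-q^e)\,\widetilde{\pi}^{-(q^d-q^e)}$ is, by the same degree argument, a unit in $A_{(P)}$.
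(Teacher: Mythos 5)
Your proposal is correct and follows essentially the same route as the paper: part 1 is obtained by applying $\tau^d$ to the identity $B_N(t,\theta)=\frac{(-1)^{r+1}}{\widetilde{\pi}}L_N(t)\prod_l\omega(t^{q^l})^{n_l}$, specializing at $t=\theta$, and using the residue formula $(t-\theta^{q^l})\omega(t)\mid_{t=\theta^{q^l}}=-\widetilde{\pi}^{q^l}/D_l$; part 2 follows from $B_N(\theta,\theta^{q^d})\equiv B_N(\theta,\theta)\pmod P$. Your write-up merely supplies details the paper leaves implicit (the $P$-unit verification of the auxiliary factors and a self-contained treatment of the $\ell_q(N)=1$ case, which the paper handles by citation), and both are handled correctly.
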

\begin{proof} ${}$\par
\noindent 1) The first assertion of the Proposition is a consequence  of the proof of Theorem 2 in \cite{AP}. For the convenience of the reader, we give  the proof of this result. Let $s\geq q,$  $ s\equiv 1\pmod{q-1}.$ 
Then (\cite{APT}, Lemma 7.6), we have that  $\mathbb B_s\in \mathbb F_q[t_1, \ldots, t_s, \theta]$ is a monic polynomial in $\theta$ of degree $\frac{s-q}{q-1}.$ \par
Let $\tau:\mathbb T_s\rightarrow \mathbb T_s$  be the continuous morphism of $\mathbb F_q[t_1, \ldots, t_s]$-algebras such that $\forall x\in \mathbb C_\infty, \tau(x)=x^q.$ Since $\lambda_\theta^q= -\theta \lambda_\theta, $ we have:
$$\tau(\omega(t_i))=(t_i-\theta) \omega(t_i), i=1, \ldots,s.$$
Let $d\geq 1,$ we get:
$$(-1)^{\frac{s-1}{q-1}}\frac{\tau^d(L_s)}{\widetilde{\pi}^{q^d}} \omega(t_1)\ldots \omega(t_s)= \frac{\tau^d(\mathbb B_s)}{\prod_{l=0}^{d-1}(t_1-\theta^{q^l})\cdots (t_s-\theta^{q^l})}.$$
Recall that, by formula (24) in \cite{PEL}, we have:
$$ (t_i-\theta^{q^l})\omega(t_i)\mid_{t=\theta^{q^l}}=-\frac{\widetilde{\pi}^{q^l}}{D_l} .$$
Let $l_1, \ldots, l_s\in \mathbb N,$ we get:
$$(-1)^{\frac{s-1}{q-1}}\frac{\tau^d(L_s)}{\widetilde{\pi}^{q^d}} (t_1-\theta^{q^{l_1}})\omega(t_1)\ldots (t_s-\theta^{q^{l_s}})\omega(t_s)= \frac{\tau^d(\mathbb B_s) (t_1-\theta^{q^{l_1}})\cdots (t_s-\theta^{q^{l_s}})}{\prod_{l=0}^{d-1}(t_1-\theta^{q^l})\cdots (t_s-\theta^{q^l})}.$$
Now, let $N\geq1$ such that $\ell_q(N)=s$ (observe that $N\equiv 1\pmod{q-1}$ and  $\ell_q(N)\geq q$). Write $N=\sum_{l=0}^k n_l q^l, n_0, \ldots n_k\in \{0, \ldots, q-1\},$ $n_k\not =0.$ Let $d\geq k+1.$  We get:\par
\noindent $(-1)^{\frac{s-1}{q-1}}\frac{\sum_{u\geq 0}\sum_{a\in A_{+,u}}\frac{a(t)^N}{a^{q^d}}}{\widetilde{\pi}^{q^d}} (\prod_{l=0}^k((t^{q^l}-\theta^{q^l})\omega(t^{q^l}))^{n_l})= \frac{B_N(t, \theta^{q^d})}{\prod_{l=0}^k\prod_{n=0, n\not = l}^{d-1}(t^{q^l}-\theta^{q^n})^{n_l}}.$\par
We get:
$$\frac{B_N(t, \theta^{q^d})}{\prod_{l=0}^k\prod_{n=0, n\not = l}^{d-1}(t^{q^l}-\theta^{q^n})^{n_l}}\mid_{t=\theta}= (-1)^{\frac{\ell_q(N)-q}{q-1}}\frac{BC_{q^d-N}}{\Pi(N) \Pi(q^d-N)}.$$

\noindent 2) The result is well-known for $\ell_q(N)=1$ (this is a consequence of the definition of the Bernoulli-Carlitz numbers and \cite{GOS}, Lemma 8.22.4). Thus, we will assume $\ell_q(N)\geq q.$ The assertion is then a consequence of the fact that:
$$B_N(\theta, \theta)\equiv B_N(\theta, \theta^{q^d})\pmod{P}.$$
\end{proof}
We have already mentioned that  $\mathbb B_s\in \mathbb F_q[t_1, \ldots, t_s, \theta]$ is a monic polynomial in $\theta$ of degree $\frac{s-q}{q-1}$ (\cite{APT}, Lemma 7.6). Let's observe that we have:
\begin{lemma}\label{LemmaREC} For $s\geq 2q-1,$ $s\equiv 1\pmod{q-1},$  we have:
$$\mathbb B_s(t_1, \ldots, t_{s-(q-1)}, 0, \ldots, 0)= (\theta -t_1\cdots t_{s-(q-1)})\mathbb B_{s-(q-1)}(t_1, \ldots, t_{s-(q-1)}).$$
More generally, if $\zeta$ is in the algebraic closure of  $\mathbb F_q$ in $\mathbb C_\infty,$ let $P$ be the monic irreducible polynomial in $A$ such that $P(\zeta)=0.$ Let $s\equiv 1\pmod{q-1},$ $s\geq q+q^d-1,$ where $d$ is the degree of $P.$ Write $s'=s-(q^d-1)$. We have:
$$\mathbb B_s(t_1, \ldots, t_{s')}, \zeta, \ldots, \zeta)= (P -P(t_1)\cdots P(t_{s'}))\mathbb B_{s'}(t_1, \ldots, t_{s'}).$$
\end{lemma}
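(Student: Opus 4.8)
The plan is to exploit the specialization property of the $\omega$-functions and the fact that $\mathbb{B}_s$ is obtained from the $L$-series $L_s$ by dividing by $\widetilde{\pi}$ and multiplying by the $\omega(t_i)$'s. First I would recall the basic evaluation: for $\zeta$ in the algebraic closure of $\mathbb{F}_q$ with minimal polynomial $P$ of degree $d$, one has (by Lemma \ref{LemmaATR5}, or rather its proof, together with \cite{AP2}, Theorem 2.9) that $\omega(t)\mid_{t=\zeta}$ makes sense in $\mathbb{C}_\infty^\times$ and more precisely that $\prod_{i=0}^{d-1}(t-\zeta^{q^i})$ kills the poles of $\prod_{i=0}^{d-1}\omega(t^{q^i})$; equivalently, since $\tau(\omega(t_j)) = (t_j - \theta)\omega(t_j)$, the product $\omega(t_j)\omega(t_j^q)\cdots\omega(t_j^{q^{d-1}})$ specialized at $t_j = \zeta$ evaluates (up to an explicit unit involving $\widetilde{\pi}$ and $D_d$) in a controlled way. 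The point is that $q^d - 1$ of the variables $t_1,\dots,t_s$, when specialized to the conjugates $\zeta, \zeta^q, \dots, \zeta^{q^{d-1}}$ arranged appropriately, conspire so that the corresponding $\omega$-factors telescope.

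The core of the argument is then a manipulation of the defining identity $\mathbb{B}_s = (-1)^{(s-1)/(q-1)} L_s \,\omega(t_1)\cdots\omega(t_s)/\widetilde{\pi}$. Specializing the last $q^d - 1$ variables to $\zeta, \zeta^q, \dots$: on the one hand $L_s\mid_{t_{s'+1}=\zeta,\dots}$ becomes, by grouping the Euler product over monic irreducibles and using $\prod P_i(t_j) = $ evaluation of $P$ (as in the Remark's $\chi_N$ formalism), a product of $L_{s'}$ with correction factors; on the other hand the $\omega$-factors for the specialized variables produce, via \cite{PEL} formula (24), exactly the factors $\widetilde{\pi}^{q^i}/D_i$ that cancel against $L_s$'s extra Euler factors and leave behind the polynomial factor $P - P(t_1)\cdots P(t_{s'})$. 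Matching both sides and using that $\mathbb{B}_{s'}$ is itself $(-1)^{(s'-1)/(q-1)} L_{s'}\omega(t_1)\cdots\omega(t_{s'})/\widetilde{\pi}$ (and that $(s-1)/(q-1) - (s'-1)/(q-1) = (q^d-1)/(q-1)$, an integer, contributes a sign which one checks is $+1$) gives the claimed formula. The base case $d=1$, $\zeta\in\mathbb{F}_q$, $P = \theta - \zeta$, is the statement $\mathbb{B}_s(t_1,\dots,t_{s-(q-1)},\zeta,\dots,\zeta) = (\theta - \zeta - (t_1-\zeta)\cdots(t_{s'}-\zeta))\cdot\ldots$; wait — here $P(t_j) = t_j - \zeta$, so the factor is $P - P(t_1)\cdots P(t_{s'}) = (\theta-\zeta) - (t_1-\zeta)\cdots(t_{s'}-\zeta)$, and when $\zeta = 0$ this is exactly $\theta - t_1\cdots t_{s'}$, matching the first displayed formula.

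Concretely, the steps in order: (1) reduce to the case of a single $\zeta$ and iterate, i.e.\ first prove the statement for the base change $t_{s}=\dots=t_{s-(q^d-1)+1}=\zeta,\dots,\zeta^{q^{d-1}}$ with each conjugate appearing the right number of times (note $q^d-1 \equiv 0 \pmod{q-1}$ and the conjugates $\zeta^{q^i}$ each get multiplicity accounting to a valid specialization); (2) write out $L_s$ with the last block of variables specialized and extract the Euler factors at $P_1,\dots,P_d$ (the primes above $P$), using that for a monic irreducible $\ell\neq P$ the factor $1 - \ell(\zeta)\cdots/\ell$ is unchanged while at $P_i$ one gets a factor forcing the polynomial $P - \prod P(t_j)$; (3) simultaneously specialize the $\omega$-block and apply $(t_i - \theta^{q^l})\omega(t_i)\mid_{t_i=\theta^{q^l}} = -\widetilde{\pi}^{q^l}/D_l$ together with $\tau(\omega) = (t-\theta)\omega$ to identify the poles/units; (4) collect signs and powers of $\widetilde{\pi}$, invoke that $\mathbb{B}_{s'}\in\mathbb{F}_q[t_1,\dots,t_{s'},\theta]$ is already known to be a polynomial so the identity of rational functions is an identity of polynomials. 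The main obstacle I expect is bookkeeping in step (3): correctly tracking which conjugate $\zeta^{q^i}$ pairs with which power of $\tau$ so that the $\omega$-factors genuinely telescope to the single polynomial $P = \prod_{i=0}^{d-1}(\theta - \zeta^{q^i})$ rather than to a spurious rational function — this requires carefully choosing the order in which the $q^d-1$ variables are specialized and matching it against the Frobenius orbit structure, exactly the kind of combinatorial care that the paper's Remark on $\chi_N(f) = \prod_i(P_i(\theta) - P_i(t)^N) - f(\theta)$ is setting up.
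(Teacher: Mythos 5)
Your overall strategy is the paper's: specialize the product formula $\mathbb B_s=(-1)^{\frac{s-1}{q-1}}L_s\,\omega(t_1)\cdots\omega(t_s)/\widetilde{\pi}$, use the Euler product of $L_s$, and evaluate the specialized $\omega$-factors. For the first identity this works exactly as you indicate: $\omega(0)^{q-1}=\lambda_\theta^{q-1}=-\theta$, the Euler factor at $P=\theta$ trivializes since $P(0)=0$, while for $P\neq\theta$ one has $P(0)^{q-1}=1$, and the missing factor at $\theta$ of $L_{s-(q-1)}$ reappears as $\theta/(\theta-t_1\cdots t_{s-(q-1)})$; this is essentially verbatim the paper's one-line proof of the first assertion.

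For the general case, however, two points in your sketch would derail a write-up. First, the lemma specializes the last $q^d-1$ variables all to the \emph{same} $\zeta$, not to the Frobenius conjugates $\zeta,\zeta^q,\ldots,\zeta^{q^{d-1}}$ ``with multiplicities'': the all-$\zeta$ choice is what makes the Euler product collapse, since $\ell(\zeta)\in\mathbb F_{q^d}^\times$ gives $\ell(\zeta)^{q^d-1}=1$ for every monic irreducible $\ell\neq P$, while $P(\zeta)=0$ kills the factor at $P$ (distributing the conjugates evenly would moreover require $d\mid q^d-1$, which fails in general). Second, formula (24) of \cite{PEL}, i.e. $(t-\theta^{q^l})\omega(t)\mid_{t=\theta^{q^l}}=-\widetilde{\pi}^{q^l}/D_l$, is about the poles of $\omega$ at $t=\theta^{q^l}$ and is irrelevant here: $\zeta$ is algebraic over $\mathbb F_q$, so $\omega(\zeta)$ is a unit of $\mathbb C_\infty$ (a Gauss--Thakur sum, by \cite{AP2}, Theorem 2.9); no poles are being cancelled and no $D_i$'s appear. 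What the computation actually requires, after the Euler factors have been matched, is the norm-type relation $\omega(\zeta)^{q^d-1}=(-1)^{\frac{q^d-1}{q-1}}P(\theta)$ (for $d=1$ this is $\exp_C(\widetilde{\pi}/(\theta-\zeta))^{q-1}=-(\theta-\zeta)$, already used in Lemma \ref{LemmaATR5}; for general $d$ it is the function-field analogue of $g\bar g=\pm p$ supplied by the properties of Gauss--Thakur sums \cite{THA}). This is precisely the ingredient the paper cites for the second assertion, and it is the one piece your plan does not isolate correctly.
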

\begin{proof}  The polynomial $ \mathbb B_s(t_1, \ldots, t_{s-(q-1)}, 0, \ldots, 0)$ is equal to:
$$(-\theta)(-1) ^{\frac{s-1}{q-1}} \frac{  \omega(t_1)\ldots \omega(t_{s-(q-1)})}{\widetilde{\pi}}\,\prod\limits_{\substack{P{\rm \, monic \, irreducible \, in\,  }A, \\ P\not =\theta}}(1-\frac{P(t_1)\cdots P(t_{s-(q-1)})}{P})^{-1}  .$$
The proof of the second  assertion of the Lemma is similar, using  \cite{AP2}, Theorem 2.9, and the properties of Gauss-Thakur sums (\cite{THA}).
\end{proof}

\begin{lemma}\label{Lemma1}
$$\mathbb B_{2q-1}=\theta -\sum_{1\leq i_1< \ldots <i_q\leq 2q-1}t_{i_1} \cdots t_{i_q}.$$
\end{lemma}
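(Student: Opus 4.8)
The plan is to compute $\mathbb B_{2q-1}$ directly from its definition as a monic degree-one polynomial in $\theta$ and pin down the constant term using the recursion of Lemma~\ref{LemmaREC}. Since $s=2q-1$ gives $r=\frac{s-q}{q-1}=1$, we know $\mathbb B_{2q-1}$ is monic of degree $1$ in $\theta$, so we may write $\mathbb B_{2q-1}=\theta - c(t_1,\dots,t_{2q-1})$ for some polynomial $c\in\mathbb F_q[t_1,\dots,t_{2q-1}]$. First I would observe that $\mathbb B_{2q-1}$ is symmetric in the variables $t_1,\dots,t_{2q-1}$ (this is immediate from the defining formula $\mathbb B_s=(-1)^{\frac{s-1}{q-1}}L_s\,\omega(t_1)\cdots\omega(t_s)/\widetilde\pi$, since $L_s$ and the product $\prod\omega(t_i)$ are both symmetric), so $c$ is a symmetric polynomial. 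Next, combining the degree bound in Lemma~\ref{LemmaATR6} (or rather its two-variable incarnation — more directly, the fact that $\mathbb B_s$ has total degree controlled and is monic in $\theta$ of degree $1$) with the fact that $c$ depends on $2q-1$ variables, I would argue that $c$ is a specific elementary-symmetric-type expression; the candidate is exactly $e_q(t_1,\dots,t_{2q-1})=\sum_{1\le i_1<\dots<i_q\le 2q-1}t_{i_1}\cdots t_{i_q}$.

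The key step is to use Lemma~\ref{LemmaREC} with $s=2q-1$ and $s'=q$ (the base case $d=1$, $\zeta=0$, $P=\theta$), which gives
$$\mathbb B_{2q-1}(t_1,\dots,t_q,0,\dots,0)=(\theta - t_1\cdots t_q)\,\mathbb B_q(t_1,\dots,t_q).$$
Since $s=q$ gives $r=0$, Lemma~\ref{LemmaATR6} tells us $\mathbb B_q=1$, so the right-hand side is $\theta - t_1\cdots t_q$. Therefore $c(t_1,\dots,t_q,0,\dots,0)=t_1\cdots t_q$. Now a symmetric polynomial $c$ in $2q-1$ variables whose specialization at $t_{q+1}=\dots=t_{2q-1}=0$ equals $t_1\cdots t_q$ is heavily constrained: writing $c$ in the basis of monomial symmetric functions, only the terms supported on a monomial of the form $t_{i_1}\cdots t_{i_q}$ (squarefree of degree $q$) survive the specialization, and among those exactly the term $m_{(1^q)}=e_q$ contributes $t_1\cdots t_q$. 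So the plan is to prove $c=e_q$ by showing $c$ has no other monomial symmetric components — this is where the degree estimates from Lemma~\ref{LemmaATR6} enter, bounding $\deg_t\mathbb B_{2q-1}$ (and hence $\deg c$) so tightly that $c$ must be a symmetric polynomial of degree exactly $q$ that is moreover a sum of squarefree degree-$q$ monomials, i.e. $c\in\mathbb F_q\cdot e_q$, and the specialization fixes the coefficient as $1$.

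I would also sanity-check this against part 3) of Lemma~\ref{LemmaATR5}: for $N$ with $\ell_q(N)=2q-1$ one has $r=1$, and the congruence $B_N(t,\theta)\equiv(\theta-t)-(t^q-t)\pmod{(t^q-t)^2}$ should be recovered by specializing $t_i=t^{q^{e_i}}$ in $\theta - e_q(t_1,\dots,t_{2q-1})$ and reducing modulo $(t^q-t)^2$; modulo $t^q-t$ every $t_i$ becomes $t$, the elementary symmetric $e_q$ of $2q-1$ copies of $t$ is $\binom{2q-1}{q}t^q$, and one checks $\binom{2q-1}{q}\equiv ?$; the linear-in-$(t^q-t)$ correction should match $-(t^q-t)$, confirming consistency.

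The main obstacle I anticipate is \emph{rigorously} ruling out the other symmetric monomials in $c$: a priori $c$ could contain, say, a multiple of $e_{q-1}\cdot(\text{something})$ or non-squarefree monomials of degree $q$ that also vanish under the specialization $t_{q+1}=\dots=t_{2q-1}=0$ only if their support lies in $\{1,\dots,q\}$ — but a squarefree degree-$q$ monomial supported in $q$ variables is forced, and a non-squarefree one (like $t_1^2 t_2\cdots t_{q-1}$) would survive the specialization and spoil the equality $c(t_1,\dots,t_q,0,\dots,0)=t_1\cdots t_q$ unless it cancels, while monomials of degree $\ne q$ are excluded by the total-degree bound combined with the structure forced by Lemma~\ref{LemmaREC} applied at other specializations $\zeta\in\mathbb F_q$ (using $P=\theta-\zeta$, which gives $\mathbb B_{2q-1}(t_1,\dots,t_q,\zeta,\dots,\zeta)=(\theta-\zeta-(t_1-\zeta)\cdots(t_q-\zeta))$, producing enough linear constraints on $c$ to force $c=e_q$). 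Assembling these specializations into a clean argument — rather than an ad hoc monomial chase — is the delicate part; the cleanest route is probably to note that $\theta-c$ must, for every $\zeta\in\mathbb F_q$ and every way of setting $q-1$ of the variables to $\zeta$, reduce to $\theta-\zeta-\prod(t_{i}-\zeta)$ over the remaining $q$ variables, and that this family of identities determines a symmetric polynomial of degree $\le q$ uniquely as $e_q$.
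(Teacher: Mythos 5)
Your strategy (write $\mathbb B_{2q-1}=\theta-c$, then pin down $c$ via the specializations of Lemma \ref{LemmaREC} together with a degree bound) is genuinely different from the paper's proof, which computes $\mathbb B_{2q-1}$ directly as the characteristic polynomial of $\phi_\theta$ acting on the rank-one Taelman class module $H_\phi=\mathbb T_{2q-1}/(A[t_1,\ldots,t_{2q-1}]\oplus\exp_\phi(\mathbb T_{2q-1}))$ generated by $\tfrac1\theta$, by evaluating $\phi_\theta(\tfrac1\theta)$ modulo $A[t_1,\ldots,t_{2q-1}]\oplus N$. However, your argument has a real gap exactly where you flag the "delicate part", and the gap is not closable with the tools you invoke. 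The family of identities $\mathbb B_{2q-1}(t_1,\ldots,t_q,\zeta,\ldots,\zeta)=(\theta-\zeta)-(t_1-\zeta)\cdots(t_q-\zeta)$, for all $\zeta\in\mathbb F_q$ and all choices of which $q-1$ variables to specialize, does \emph{not} determine a symmetric polynomial uniquely: the symmetric polynomial $g=\prod_{i=1}^{2q-1}(t_i^q-t_i)$ vanishes under every single one of these specializations, so $c$ and $c+g$ (and $c+gh$ for any symmetric $h$) satisfy the identical constraints. Hence some a priori bound on $c$ in the variables $t_i$ is indispensable.

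The bound you propose to import from Lemma \ref{LemmaATR6} does not do the job: that lemma controls only the one-variable specializations $B_N(t,\theta)=\mathbb B_{2q-1}\mid_{t_i=t^{q^{e_i}}}$, and a bound on $\deg_t B_N(t,\theta)$ (which is of order $N\approx 2q^2$ here, in any case $\geq q$) gives no control on the total degree or the partial degrees of $\mathbb B_{2q-1}$ itself, since distinct multivariable monomials collapse under the specialization. What you actually need is something like $\deg_{t_i}\mathbb B_{2q-1}\leq q-1$ for each $i$ (indeed $\leq 1$, i.e.\ multilinearity, which is what the table suggests), and no such statement is established in the paper before this lemma nor proved in your proposal; it would itself require an argument (e.g.\ via the $\tau$-difference equation satisfied by $L_s\omega(t_1)\cdots\omega(t_s)$, or via the class-module description the paper uses). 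Even granting a partial-degree bound $\leq q-1$, the deduction that the specializations then force $c=e_q$ needs to be written out (it works, via expanding in elementary symmetric polynomials and using that $\binom{q-1}{j}=(-1)^j\neq0$ in $\mathbb F_p$, but it is not the one-line "heavily constrained" step you describe). As it stands, the proof is incomplete at its central step.
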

\begin{proof} Let $\mathbb T_{2q-1}(K_\infty)$ be the Tate algebra in the variable $t_1, \ldots, t_{2q-1}$ with coefficients in $K_\infty.$ Then:
$$\mathbb T_{2q-1}=\frac{1}{\theta}A[t_1, \ldots, t_{2q-1}]\oplus N,$$
where $N= \{ f\in \mathbb T_{2q-1}, v_\infty(f)\geq 2\}.$ Let $\phi: A\rightarrow A[t_1, \ldots, t_{2q-1}]\{Ê\tau\}$
 be the morphism of $\mathbb F_q$-algebras give by $\phi_\theta= (t_1-\theta)\cdots (t_{2q-1}-\theta)\tau +\theta.$ Then by the results in \cite{APT}, we have:
 $$N=\exp_\phi( \mathbb T_{2q-1}),$$
 and $H_\phi: =\frac{\mathbb T_{2q-1}}{A[t_1, \ldots, t_{2q-1}]\oplus N}$ is a free $\mathbb F_q[t_1, \ldots, t_{2q-1}]$-module of rank one generated by $\frac{1}{\theta}.$ Furthermore:
 $$\mathbb B_s=\det_{\mathbb F_q[t_1, \ldots, t_{2q-1}][Z]}(Z{\rm Id}-\phi_\theta\mid_{H_\phi\otimes_{\mathbb F_q[t_1, \ldots, t_{2q-1}]}\mathbb F_q[t_1, \ldots, t_{2q-1}][Z]})\mid_{Z=\theta}.$$
 Now, observe that:
 $$\phi_\theta (\frac{1}{\theta}) \equiv \frac{\sum_{1\leq i_1< \ldots <i_q\leq 2q-1}t_{i_1} \cdots t_{i_q}}{\theta}\pmod{A[t_1, \ldots, t_{2q-1}]\oplus N}.$$
 The Lemma follows.
\end{proof}
 For $\underline{m}=(m_1, \ldots, m_d)\in \mathbb N^d,$ we set $m_0: =s-(m_1+\cdots +m_d),$ and:
$$\sigma_s(\underline{m})=\sum\prod_{u=1}^d\prod_{i\in J_u} t_i^u,$$
where the sum runs through the disjoint unions $J_1\bigsqcup \cdots \bigsqcup J_k\subset \{ 1, \ldots, s\}$ such that $\mid J_u\mid =m_u,$ $u=1, \ldots, d.$ Notice in particular that $\sigma_s(\underline m)= 0$ if $m_1+\cdots +m_d>s$, that is, if $m_0<0$. To give an example, the above lemma shows that $\mathbb B_{2q-1}=\theta - \sigma((q))$.

\begin{lemma}\label{Lemma2} Let $\underline{m}\in \mathbb N^d.$ We have :
$$\sigma_s (\underline{m})\mid_{t_s=0, \ldots, t_{s-(q-2)}=0}= \sigma_{s-(q-1)} (\underline{m}).$$
In particular, if $m_0<q-1,$ we have:
$$\sigma_s (\underline{m})\mid_{t_s=0, \ldots, t_{s-(q-2)}=0}=0.$$
\end{lemma}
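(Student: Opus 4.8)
The plan is to argue directly from the combinatorial description of $\sigma_s(\underline m)$ and to track which monomials are annihilated by the specialization. Recall that $\sigma_s(\underline m)=\sum\prod_{u=1}^d\prod_{i\in J_u}t_i^u$, the sum being over tuples of pairwise disjoint subsets $J_1,\dots,J_d\subseteq\{1,\dots,s\}$ with $|J_u|=m_u$; since the substitution $t_{s-(q-2)}=\dots=t_s=0$ is a ring homomorphism on $\mathbb F_q[t_1,\dots,t_s,\theta]$, it suffices to understand its effect termwise.

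First I would observe that every exponent $u$ appearing in a term is at least $1$, so the monomial attached to a tuple $(J_1,\dots,J_d)$ is sent to $0$ as soon as one of the sets $J_u$ contains an index from $\{s-(q-2),\dots,s\}$, and is left unchanged otherwise. Hence the image of $\sigma_s(\underline m)$ under the substitution is the sum of exactly those monomials whose tuples satisfy $J_1\cup\dots\cup J_d\subseteq\{1,\dots,s-(q-1)\}$. But these are precisely the tuples indexing $\sigma_{s-(q-1)}(\underline m)$, and the corresponding monomials agree on the nose; this gives the displayed identity.

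For the ``in particular'' statement I would simply recompute the degenerate-case bookkeeping: the quantity playing the role of $m_0$ for $\sigma_{s-(q-1)}(\underline m)$ is $(s-(q-1))-(m_1+\dots+m_d)=m_0-(q-1)$, which is negative precisely when $m_0<q-1$. In that case one cannot place pairwise disjoint subsets of total cardinality $m_1+\dots+m_d>s-(q-1)$ inside $\{1,\dots,s-(q-1)\}$, so $\sigma_{s-(q-1)}(\underline m)=0$ by the convention recorded just before the lemma, and the claim follows.

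I do not expect any genuine obstacle here; the whole argument is a monomial-by-monomial matching. The only point requiring a little care is the dependent nature of $m_0$: one must remember that decreasing $s$ by $q-1$ decreases $m_0$ by $q-1$ as well, so that the vanishing range $m_0<q-1$ on the left corresponds exactly to the already-agreed vanishing $\sigma_{s-(q-1)}(\underline m)=0$ on the right.
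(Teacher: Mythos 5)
Your proof is correct and is precisely the ``straight computation'' that the paper's one-line proof alludes to: since every variable occurring in a monomial of $\sigma_s(\underline m)$ carries a positive exponent, the specialization kills exactly the terms whose tuple $(J_1,\dots,J_d)$ meets $\{s-(q-2),\dots,s\}$, and the survivors match $\sigma_{s-(q-1)}(\underline m)$ term by term. Your bookkeeping for the ``in particular'' clause (that $m_0$ drops by $q-1$ when $s$ does, so $m_0<q-1$ forces the right-hand side to vanish by the stated convention) is also exactly right.
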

\begin{proof} This is a straight computation.
\end{proof}
Let $\rho: \mathbb F_p[t_1, \ldots, t_s]\rightarrow \mathbb N\cup \{ +\infty\}$ be the function given by:\par
\noindent - if $f=0,$ $\rho(f)=+\infty,$\par
\noindent - if $f\not =0,$ $f=\sum \alpha_{j_1, \ldots, j_s}t_1^{j_1}\cdots t_s^{j_s},$ $\alpha_{j_1, \ldots, j_s}\in \mathbb F_p,$ then $\rho (f)={\rm Inf}\{ j_1+\ldots +j_s,\, \alpha_{j_1, \ldots, j_s}\not =0\}.$\par
 Let's write:
$$ \mathbb B_s= \sum_{ i=0}^rB_{i,s} \theta^{r-i},$$
where $B_{i,s}\in \mathbb F_p[t_1, \ldots, t_s]$ is a symmetric polynomial, and $B_{0,s}=1.$

\begin{proposition}\label{Proposition1}
For $i=1, \ldots, r,$ we have:
$$\rho (B_{i,s})\geq  i(q-1)+1.$$
\end{proposition}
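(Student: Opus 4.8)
The plan is to induct on $s$ (over the arithmetic progression $s\equiv 1\pmod{q-1}$, $s\geq q$), using the recursion of Lemma \ref{LemmaREC} in the form $\mathbb{B}_s(t_1,\dots,t_{s-(q-1)},0,\dots,0)=(\theta-t_1\cdots t_{s-(q-1)})\mathbb{B}_{s-(q-1)}(t_1,\dots,t_{s-(q-1)})$, with base case $s=q$ (where $\mathbb{B}_q=1$, so there is nothing to prove) and the next case $s=2q-1$ supplied by Lemma \ref{Lemma1}: there $r=1$, $B_{1,2q-1}=-\sigma_{2q-1}((q))$, and every monomial appearing has degree exactly $q=1\cdot(q-1)+1$, so $\rho(B_{1,2q-1})\geq q$ as required. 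First I would record the behaviour of $\rho$ under the specialization $\nu\colon t_{s-(q-2)}=\dots=t_s=0$: since this is an $\mathbb{F}_p$-algebra map sending generators to $0$ or to generators, one has $\rho(\nu(f))\geq \rho(f)$ for all $f$. Comparing coefficients of $\theta^{r-i}$ on both sides of the Lemma \ref{LemmaREC} identity (noting $r$ drops by $1$ when $s$ drops by $q-1$), I get, writing $\pi=t_1\cdots t_{s-(q-1)}$ and $r'=r-1$,
$$\nu(B_{i,s})=B_{i,s-(q-1)}-\pi\,B_{i-1,s-(q-1)}\qquad(1\leq i\leq r),$$
with the conventions $B_{0,\bullet}=1$ and $B_{r',s-(q-1)}$ the top term below the relevant range handled separately. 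By the inductive hypothesis $\rho(B_{i,s-(q-1)})\geq i(q-1)+1$ and $\rho(\pi B_{i-1,s-(q-1)})=(s-(q-1))+\rho(B_{i-1,s-(q-1)})\geq (s-(q-1))+((i-1)(q-1)+1)$, and since $s-(q-1)\geq q\geq q-1$ the second quantity is also $\geq i(q-1)+1$; hence $\rho(\nu(B_{i,s}))\geq i(q-1)+1$.

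The remaining issue is that $\rho(\nu(B_{i,s}))\geq i(q-1)+1$ only bounds the degree of those monomials of $B_{i,s}$ that survive the specialization $\nu$; a priori $B_{i,s}$ could contain a low-degree monomial that is killed by setting the last $q-1$ variables to zero, i.e. a monomial divisible by every one of $t_{s-(q-2)},\dots,t_s$. This is exactly where symmetry is used. I would argue: suppose $B_{i,s}$ contains a monomial $\mu$ of degree $\rho(B_{i,s})=:\delta<i(q-1)+1$. Among all monomials of $B_{i,s}$ of this minimal degree $\delta$, pick one, say $\mu=\prod_j t_j^{a_j}$. Since $B_{i,s}$ is symmetric, every permutation $\mu^\sigma$ of $\mu$ also appears with the same coefficient; these all have degree $\delta$, so no cancellation among them can occur (distinct monomials), and the whole $S_s$-orbit of $\mu$ lies in $B_{i,s}$. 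Now I claim this orbit contains a monomial not divisible by $t_{s-(q-2)}\cdots t_s$: indeed $\mu$ involves at most $\delta$ of the $s$ variables, and since $\delta < i(q-1)+1\leq r(q-1)+1 = \frac{s-q}{1}\cdot\frac{q-1}{q-1}+1 = s-q+1 \le s-(q-1)$ (using $q\geq 2$), we have $\delta\leq s-(q-1)$, so some $S_s$-translate of $\mu$ uses only the variables $t_1,\dots,t_{s-(q-1)}$ and hence is not killed by $\nu$. But then $\rho(\nu(B_{i,s}))\leq \delta<i(q-1)+1$, contradicting the paragraph above. Therefore $\rho(B_{i,s})\geq i(q-1)+1$, completing the induction.

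The main obstacle I anticipate is the bookkeeping at the very top coefficient and the precise inequality $\delta\leq s-(q-1)$ that makes the symmetry argument go through; one must check that the strict inequality $\delta<i(q-1)+1$ together with $i\leq r=\frac{s-q}{q-1}$ genuinely forces $\delta\leq s-(q-1)$, so that there is ``room'' to slide the monomial onto the first $s-(q-1)$ variables. If for some reason that counting is too tight for small $s$, one falls back on treating $s=2q-1$ (and possibly $s=3q-2$) by hand via Lemmas \ref{Lemma1} and \ref{Lemma2} and running the induction from there. The identity $\nu(\sigma_s(\underline m))=\sigma_{s-(q-1)}(\underline m)$ of Lemma \ref{Lemma2} is exactly the compatibility one needs if one prefers to phrase the whole argument directly in terms of the $\sigma_s(\underline m)$ rather than abstract symmetric polynomials, and I would use it to double-check the coefficient comparison above.
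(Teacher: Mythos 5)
Your proof is correct and follows essentially the same route as the paper: induction via the specialization $t_{s-(q-2)}=\dots=t_s=0$ together with Lemma \ref{LemmaREC}, the coefficient identity $\nu(B_{i,s})=B_{i,s-(q-1)}-t_1\cdots t_{s-(q-1)}B_{i-1,s-(q-1)}$, and base case Lemma \ref{Lemma1}. The only difference is presentational: where you use symmetry to show that a minimal-degree monomial of degree $\le r(q-1)$ has an $S_s$-translate surviving the specialization, the paper decomposes $B_{i,s}$ into the orbit sums $\sigma_s(\underline m)$ and notes that the orbits annihilated by the specialization (those with $m_0<q-1$) automatically have degree $\ge r(q-1)+2$ --- the same observation in different packaging.
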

\begin{proof} By Lemma \ref{Lemma1}, this is true for $r=1,$ thus we can assume that $r\geq 2.$ The proof is by induction on $r.$  Recall that by Lemma \ref{LemmaATR5}, we have:
$$\mathbb B_s\mid_{t_1=\ldots= t_s=0}= \theta^r.$$
Thus, for $i=1, \ldots, r,$ we can write:
$$B_{i,s}=\sum_{\underline{m}\in S} x_{i, \underline{m}} \sigma_s (\underline{m}), x_{i, \underline{m}}\in \mathbb F_p,$$
where $S=\{(m_1, \dots, m_s)\in \mathbb N^s, 1\le m_1+\cdots+m_d\le s\}$.
Set:
$$\widetilde{B_{i,s}}=B_{i,s}-\sum_{\underline{m}\in S, m_0<q-1} x_{i, \underline{m}} \sigma_s (\underline{m}).$$
Then:
$$\rho(B_{i,s}-\widetilde{B_{i,s}})\geq r(q-1)+2.$$
Therefore we have to prove:
$$\rho(\widetilde{B_{i,s}})\geq i(q-1)+1.$$
Observe that, by Lemma \ref{Lemma2}, we have:
$$B_{i,s}\mid_{t_s= \ldots= t_{s-(q-2)}=0}= \widetilde{B_{i,s}}\mid_{t_s= \ldots= t_{s-(q-2)}=0}= \sum_{\underline{m}\in S, m_0\geq q-1} x_{i, \underline{m}} \sigma_{s-(q-1)} (\underline{m}).$$
By Lemma \ref{LemmaREC}, we have:
$$\mathbb B_s\mid_{t_s= \ldots= t_{s-(q-2)}=0}= (\theta-t_1\cdots t_{s-(q-1)}) \mathbb B_{s-(q-1)}.$$
We therefore get, for $i=1, \ldots, r:$
$$\widetilde{B_{i,s}}\mid_{t_s= \ldots= t_{s-(q-2)}=0}= B_{i, s-(q-1)}- t_1\cdots t_{s-(q-1)}B_{i-1, s-(q-1)},$$
where we have set $ B_{r, s-(q-1)}=0.$ Now, by the induction hypothesis:
$$\rho(B_{i, s-(q-1)}- t_1\cdots t_{s-(q-1)}B_{i-1, s-(q-1)})\geq i(q-1)+1.$$
Thus:
$$\rho(B_{i,s})\geq i(q-1)+1.$$
\end{proof}

\begin{corollary}\label{Corollary1} Let $N\equiv 1\pmod{q-1},$ $\ell_q(N)\geq 2q-1.$ Then $\forall a\in \overline{\mathbb FÑ}_q[\theta],$ we have:
$$B_N(t, \theta)\mid_{t=a}\not =0.$$
\end{corollary}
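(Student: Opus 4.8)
The plan is to establish that the two‑variable polynomial $B_N(t,\theta)\in\mathbb F_p[t,\theta]$ has no root in $\overline{\mathbb F}_q[\theta]$, and essentially everything will come from Proposition \ref{Proposition1}. Writing $B_N(t,\theta)=\sum_{i=0}^{r}b_i(t)\,\theta^{r-i}$ with $b_i(t)=B_{i,s}\mid_{t_k=t^{q^{e_k}}}\in\mathbb F_p[t]$ (so $b_0=1$, since $B_{0,s}=1$), I would first note that, because each exponent $q^{e_k}$ is $\ge1$, the substitution $t_k\mapsto t^{q^{e_k}}$ never lowers degrees; hence Proposition \ref{Proposition1} tells us that for $1\le i\le r$ every monomial occurring in $b_i(t)$ has $t$-degree at least $i(q-1)+1$. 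Two consequences to record: first, $b_i(0)=0$ for $i\ge1$, so $B_N(0,\theta)=\theta^{r}$; second, in any monomial $c\,t^{l}\theta^{r-i}$ of $B_N(t,\theta)$ with $c\ne0$ and $i\ge1$ one has $l\ge i(q-1)+1>i$ (using $q\ge2$).

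Next, suppose for contradiction that some $a\in\overline{\mathbb F}_q[\theta]$ satisfies $B_N(t,\theta)\mid_{t=a}=0$. If $a=0$ this already contradicts $B_N(0,\theta)=\theta^{r}\ne0$, so assume $a\ne0$. As $B_N(t,\theta)-B_N(0,\theta)$ lies in $t\,\mathbb F_p[t,\theta]$, specializing $t=a$ yields $\theta^{r}=B_N(0,\theta)\in a\,\overline{\mathbb F}_q[\theta]$, i.e. $a$ divides $\theta^{r}$ in the unique factorization domain $\overline{\mathbb F}_q[\theta]$; hence $a=\beta\theta^{j}$ for some $\beta\in\overline{\mathbb F}_q^{\times}$ and some integer $j$ with $0\le j\le r$. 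I expect this divisibility reduction to be the one delicate point: a frontal attack on a general $a(\theta)$ fails because expanding $b_i(a(\theta))$ can create a $\theta^{r}$-term, whereas once $a=\beta\theta^{j}$ each monomial $c\,t^{l}\theta^{r-i}$ of $B_N$ specializes to the \emph{single} monomial $c\,\beta^{l}\theta^{jl+r-i}$.

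Finally, I would read off the coefficient of $\theta^{r}$ in $B_N(\beta\theta^{j},\theta)$. The term with $i=0$ contributes exactly $\theta^{r}$ (recall $b_0=1$). A monomial with $i\ge1$ contributes to $\theta^{r}$ only if $jl+r-i=r$, that is $jl=i$; but this forces $j\ge1$, and then $l\le jl=i<l$, a contradiction. Hence the coefficient of $\theta^{r}$ in $B_N(\beta\theta^{j},\theta)$ equals $1$, so $B_N(\beta\theta^{j},\theta)\ne0$, contradicting the assumption. This proves $B_N(t,\theta)\mid_{t=a}\ne0$ for every $a\in\overline{\mathbb F}_q[\theta]$. (The hypothesis $\ell_q(N)\ge2q-1$ amounts exactly to $r\ge1$.)
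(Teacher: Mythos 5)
Your proof is correct and takes essentially the same route as the paper: the paper deduces from Proposition \ref{Proposition1} that $B_N(t,\theta)-\theta^r\in t\,(t,\theta)^r$ and then asserts that the corollary ``follows easily.'' Your reduction to $a=\beta\theta^j$ via the divisibility $a\mid\theta^r$, followed by reading off the coefficient of $\theta^r$, is a correct and careful filling-in of that final step.
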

\begin{proof} By Proposition \ref{Proposition1}, we have:
$$B_N(t, \theta) -\theta^r \in t(t, \theta)^{r}.$$
The Corollary follows easily.
\end{proof}
\noindent {\sl Proof of Theorem \ref{TheoremATR-BC}}${}$\par
We can assume that $\ell_q(N) \geq q.$ By Lemma \ref{LemmaATR6}, the total degree in $t, \theta$ of $B_N(t, \theta)$ is strictly less than $(r+1)N,$  where $r=\frac{\ell_q(N)-q}{q-1}.$ Now, by Corollary \ref{Corollary1}:
$$B_N(\theta, \theta)\not =0.$$
Furthermore, $\deg_\theta B_N(\theta, \theta) <(r+1)N.$ Thus if $P$ is a monic irreducible polynomial in $A$ such that $\deg_\theta P \geq (r+1)d,$ we have:
$$B_N(\theta, \theta )\not \equiv 0\pmod{P}.$$
We conclude the proof of the Theorem by Proposition \ref{PropositionATR1}.

\section{Exceptional zeros and eigenvalues of certain $K$-endomorphisms}
\subsection{The $L$-series $L_N(t)$}${}$\par
Let $N\geq 1$ be an integer. Recall that 
$$L_N(t)=\sum_{d\geq 0}\sum_{a\in A_{+,d}}\frac{a(t)^N}{a} =\sum_{i\geq 0} \alpha_{i,N}(t) \theta^{-i}, \, \alpha_{i,N}(t) \in \mathbb T_t^\times.$$

Let $d\geq 1$ be an integer, we set for ${\bf k}= (k_0, \ldots, k_{d-1})\in \mathbb N^{d}$:\par
\noindent - $\ell({\bf k})=d,$\par
\noindent - $\mid {\bf k}\mid= k_0+\cdots +k_{d-1},$\par
\noindent - if $a=a_0+a_1\theta+\cdots +a_{d-1} \theta^{d-1} +\theta^d, a_i\in \mathbb F_q, i=0, \ldots, d-1,$ $a^{{\bf k}}=\prod_{i=0}^{d-1} a_i^{k_{i}}.$\par
${}$\par
Let's begin by a simple observation.
Let $d\geq 1$ and let ${\bf k}=(k_0, \ldots, k_{d-1})\in \mathbb N^{d}.$   Let $N\geq 1$ be an integer, we get:
$$\sum_{a\in A_{+,d}} a(t)^N a^{{\bf k}}= \sum_{a\in A_{+,d}}\sum_{{\bf m}\in \mathbb N^{d+1}, \mid{\bf m}\mid = N} C(N, {\bf m}) a^{{\bf k}}a^{{\bf m}}t^{   
m_1+2m_2+\cdots + dm_{d}} ,$$
where:
$$C(N, {\bf m})=\frac{N!}{m_0!\cdots m_{d}!}\in \mathbb F_p.$$
Recall that by Luca's Theorem $C(N, {\bf m})\not =0$ if and only if there is no carryover $p$-digits in the sum $N=m_0+\cdots+m_{d}.$ Furthermore, recall that, for $n\in \mathbb N,$ $\sum_{\lambda \in \mathbb F_q} \lambda^n \not = 0$ if and only if $n\equiv 0\pmod{q-1}$ and $n\geq 1.$ Thus,  for ${\bf m}\in \mathbb N^{d+1},$ $\sum_{a\in A_{+,d}} a^{{\bf m}}=0$ unless $(m_0, \dots, m_{d-1})\in ((q-1)(\mathbb N\setminus \{0\}))^d$ and in this latter case $\sum_{a\in A_{+,d}} a^{{\bf m}}=(-1)^d.$\par
Thus for $d,N\geq 1,$ $ {\bf k}\in \{0, \ldots, q-1\}^d,$ we denote by $U_{d}(N, {\bf k})$ the set of elements ${\bf m}\in \mathbb N^{d+1}$ such that:\par
\noindent - there is no carryover $p$-digits in the sum $N=m_0+\cdots+m_d,$\par
\noindent - for $n=0, \ldots, d-1,$ $m_n-k_n \in (q-1) \mathbb N.$  \par 
\noindent For ${\bf m}\in U_d(N, {\bf k}),$ we set:
$$\deg {\bf m}= m_1+2m_2+\cdots + d m_{d}.$$
An element ${\bf m}\in U_{d}(N,{\bf k})$ is called optimal if $\deg {\bf m}= {\rm Max}\{ \deg {\bf n}, {\bf n}\in U_d(N, {\bf k})\}.$ If $U_{d}(N,{\bf k})\not =\emptyset,$ the greedy element of $ U_{d}(N,{\bf k})$ is the element ${\bf m}= (m_0, \ldots, m_d)\in U_{d}(N,{\bf k})$ such that $(m_{d}, \ldots, m_1)$ is largest lexicographically.\par
Let ${\bf k}\in \mathbb N^d, d\geq 1.$ For $n=0, \ldots, d-1,$ let $\bar k_n \in \{ 0, \ldots, q-1\}$ be the least integer such that $k_n +\bar k_n \in (q-1)(\mathbb N\setminus\{0\}).$ We set:
$$\bar {\bf k}=(\bar k_0, \ldots, \bar k_{d-1}).$$
We get:
$$\sum_{a\in A_{+,d}} a(t)^N a^{{\bf k}}=(-1)^d\sum_{{\bf m} \in U_d(N, \bar {\bf k})}C(N, {\bf m}) t^{\deg {\bf m}}.$$
${}$\par
Let $N\geq 1$ be an integer and let $\ell_q(N)$ be the sum of digits of $N$ in base $q.$ Then we can write in a unique way:
$$N=\sum_{n=1}^{\ell_q(N)} q^{e_n}, \, e_1\leq e_2\leq \cdots \leq e_{\ell_q(N)}.$$
We set:
$$r={\rm Max} \{ 0, [\frac{\ell_q(N)-q}{q-1}]\}\in \mathbb N.$$

\begin{lemma}\label{LemmaATR2} We have:
$$\forall i\geq 0,\,  \alpha_{i,N}(t)=\sum_{\ell({\bf k})+w({\bf k})=i} (-1)^{\ell ({\bf k})}C_{{\bf k}}\sum_{a\in A_{+,\ell({\bf k})}} a(t)^N a^{{\bf k}} \in \mathbb F_p[t] ,$$
where $C_{{\bf k}}= (-1)^{\mid {\bf k}\mid} \frac{\mid {\bf k}\mid !}{k_0!\cdots k_{d-1}!} \in \mathbb F_p,$ $w({\bf k})= k_{d-1}+\cdots +(d-1)k_1+dk_0, $ for  ${\bf k}=(k_0, \ldots, k_{d-1}).$
\end{lemma}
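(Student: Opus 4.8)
The plan is to expand each summand $a(t)^{N}/a$ as a power series in $\theta^{-1}$ and to read off the coefficient of $\theta^{-i}$. Fix $d\ge 0$ and $a\in A_{+,d}$, and write $a=\theta^{d}+a_{d-1}\theta^{d-1}+\cdots+a_{0}$ with $a_{0},\dots,a_{d-1}\in\mathbb F_{q}$, so that
$$\frac{1}{a}=\theta^{-d}\Bigl(1+\sum_{n=0}^{d-1}a_{n}\theta^{\,n-d}\Bigr)^{-1}.$$
Since $v_{\infty}\bigl(\sum_{n}a_{n}\theta^{\,n-d}\bigr)\ge 1$, the second factor equals the convergent series $\sum_{j\ge 0}(-1)^{j}\bigl(\sum_{n}a_{n}\theta^{\,n-d}\bigr)^{j}$; expanding each $j$-th power by the multinomial theorem, the contribution of a multi-index ${\bf k}=(k_{0},\dots,k_{d-1})\in\mathbb N^{d}$ with $|{\bf k}|=j$ is
$$(-1)^{|{\bf k}|}\,\frac{|{\bf k}|!}{k_{0}!\cdots k_{d-1}!}\Bigl(\prod_{n=0}^{d-1}a_{n}^{k_{n}}\Bigr)\,\theta^{-\sum_{n=0}^{d-1}(d-n)k_{n}}=C_{{\bf k}}\,a^{{\bf k}}\,\theta^{-w({\bf k})},$$
which is exactly where the quantities $C_{{\bf k}}$, $a^{{\bf k}}$ and $w({\bf k})$ enter. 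Multiplying through by $\theta^{-d}=\theta^{-\ell({\bf k})}$ and by $a(t)^{N}$ gives, for $a$ of degree $d$,
$$\frac{a(t)^{N}}{a}=\sum_{{\bf k}\in\mathbb N^{d}}C_{{\bf k}}\,a(t)^{N}a^{{\bf k}}\,\theta^{-(\ell({\bf k})+w({\bf k}))}.$$

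Next I would sum over $a\in A_{+,d}$ and then over $d\ge 0$; the outer sum converges since the $\infty$-adic valuation of $\sum_{a\in A_{+,d}}a(t)^{N}/a$ tends to $+\infty$ as $d\to\infty$, an estimate already established in the proof of Lemma~\ref{LemmaATR1}. To regroup the resulting double sum according to powers of $\theta^{-1}$ it suffices to know that, for each fixed $i$, only finitely many pairs $(d,{\bf k})$ with ${\bf k}\in\mathbb N^{d}$ satisfy $\ell({\bf k})+w({\bf k})=i$: every coefficient $d-n$ occurring in $w$ is $\ge 1$, so $w({\bf k})\ge|{\bf k}|\ge 0$, whence $d\le i$, and for each such $d$ the equation $w({\bf k})=i-d$ has finitely many solutions in $\mathbb N^{d}$. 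Collecting the coefficient of $\theta^{-i}$ then yields the stated formula for $\alpha_{i,N}(t)$. The point one must watch here is the exact power of $-1$ multiplying $C_{{\bf k}}$: whether it carries an additional factor $(-1)^{\ell({\bf k})}$ depends on whether one leaves $\sum_{a\in A_{+,\ell({\bf k})}}a(t)^{N}a^{{\bf k}}$ un-evaluated or replaces it by its value $(-1)^{\ell({\bf k})}\sum_{{\bf m}\in U_{\ell({\bf k})}(N,\bar{\bf k})}C(N,{\bf m})\,t^{\deg{\bf m}}$ coming from the power-sum identity $\sum_{a\in A_{+,d}}a^{{\bf m}}\in\{0,(-1)^{d}\}$, and the sign displayed must be matched to the form used afterwards.

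Finally, for the assertion $\alpha_{i,N}(t)\in\mathbb F_{p}[t]$: the formula exhibits $\alpha_{i,N}(t)$ as a finite $\mathbb F_{p}$-linear combination — the $C_{{\bf k}}$ being (signed) multinomial coefficients — of the polynomials $\sum_{a\in A_{+,d}}a(t)^{N}a^{{\bf k}}$, and by the computation carried out just before the lemma each of these equals $(-1)^{d}\sum_{{\bf m}\in U_{d}(N,\bar{\bf k})}C(N,{\bf m})\,t^{\deg{\bf m}}$ — one expands $a(t)^{N}$ multinomially, uses $\sum_{\lambda\in\mathbb F_{q}}\lambda^{n}\in\{0,-1\}$ to discard every term with ${\bf m}\notin U_{d}(N,\bar{\bf k})$, and notes $C(N,{\bf m})\in\mathbb F_{p}$ — so $\alpha_{i,N}(t)\in\mathbb F_{p}[t]$. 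Of the whole argument only two steps call for genuine care: the legitimacy of the term-by-term rearrangement, secured by the finiteness remark above, and the precise tracking of signs through the geometric series and the multinomial and power-sum expansions; the latter is, in my view, the one real obstacle.
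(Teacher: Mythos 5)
Your proof is correct and follows essentially the same route as the paper's: expand $1/a$ as a geometric series in $\theta^{-1}$, apply the multinomial theorem to get the $C_{\bf k}\,a^{\bf k}\,\theta^{-w({\bf k})}$ terms, then sum over $a$ and $d$ and collect the coefficient of $\theta^{-i}$ (your finiteness remark justifying the rearrangement is a welcome addition). The sign subtlety you flag is genuine: the direct expansion produces $C_{\bf k}$, not $(-1)^{\ell({\bf k})}C_{\bf k}$, in front of the un-evaluated sum $\sum_{a\in A_{+,\ell({\bf k})}}a(t)^N a^{\bf k}$ — the extra $(-1)^{\ell({\bf k})}$ is only correct after replacing that sum by $\sum_{{\bf m}\in U_{\ell({\bf k})}(N,\bar{\bf k})}C(N,{\bf m})t^{\deg{\bf m}}$ — but this discrepancy is harmless since only $\deg_t\alpha_{i,N}$ is used later.
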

\begin{proof}  Let $a\in A_{+,d}.$ We have:
$$\frac{1}{a}= \frac{1}{\theta^d}\sum_{{\bf k}\in \mathbb N^d}C_{{\bf k}} a^{{\bf k}} \frac{1}{\theta^{w({\bf k})}},$$
where $C_{{\bf k}}= (-1)^{\mid {\bf k}\mid} \frac{\mid {\bf k}\mid !}{k_0!\cdots k_{d-1}!} \in \mathbb F_p, $ $w({\bf k})= k_{d-1}+\cdots +(d-1)k_1+dk_0. $ 
Thus:
$$\sum_{a\in A_{+,d}}\frac{a(t)^N}{a} =\frac{(-1)^d}{\theta^d}\sum_{{\bf k}\in \mathbb N^d}C_{{\bf k}}  \frac{1}{\theta^{w({\bf k})}}\sum_{a\in A_{+,d}} a(t)^N a^{{\bf k}}.$$
Therefore: 
$$ \alpha_{i,N}(t)=\sum_{\ell({\bf k})+w({\bf k})=i} (-1)^{\ell ({\bf k})}C_{{\bf k}}\sum_{a\in A_{+,\ell({\bf k})}} a(t)^N a^{{\bf k}} \in \mathbb F_p[t].$$
\end{proof}
\begin{lemma}\label{LemmaATR3} Let $j\in \mathbb Z.$ Then $L_N(t)\mid_{t=\theta^{q^j}}=0$ if and only if $N\equiv 1\pmod{q-1},$  and $q^jN>1.$
\end{lemma}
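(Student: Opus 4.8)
The plan is to evaluate $L_N(t)$ at $t = \theta^{q^j}$ directly using the specialization formula and the functional equation for $\omega$. First I would recall that, by Pellarin's formula, for $N \equiv 1 \pmod{q-1}$ we have a closed expression relating $\sum_{d\geq 0}\sum_{a\in A_{+,d}} a(t)^N a^{-1}$ to the Carlitz period $\widetilde\pi$, the polynomial $B_N(t,\theta)$, and the product $\prod_{l=0}^k \prod_{j\geq 0}(1 - t^{q^l}/\theta^{q^j})^{n_l}$ (as in the Remark in \S\ref{BN}). Evaluating at $t = \theta^{q^j}$, the factor $(1 - t^{q^l}/\theta^{q^m})$ vanishes exactly when $m = j + l$, so the infinite product contributes a zero of order $\sum_{l=0}^k n_l = \ell_q(N) \geq 1$ at $t = \theta^{q^j}$, \emph{provided} $q^j N > 1$, i.e. provided the relevant exponents $q^{j+l}$ are genuinely among the $\theta^{q^m}$ appearing in the product (which for $j \geq 0$ is automatic, and for $j < 0$ requires $q^{j}q^{e_n} = q^{j}\cdot(\text{powers summing to }N) \geq 1$ for at least one term, i.e. $q^j N \geq 1$, with the strict inequality coming from the edge case).

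Next I would handle the converse and the congruence condition on $N$. If $N \not\equiv 1 \pmod{q-1}$, then $L_N(\theta^{q^j})$ should be a nonzero element: one sees this by writing $L_N(t) = \sum_{d}\sum_{a\in A_{+,d}} a(t)^N/a$ and noting that for $v_\infty(x) > -1/N$ the sum has the Euler product $\prod_{P}(1 - P(x)^N/P)^{-1} \in \mathbb C_\infty^\times$ — but $\theta^{q^j}$ has $v_\infty = -q^j$, which lies in the convergence region only when $q^j < 1/N$... so instead I would argue via the $\omega$-twisted form: the twisted sum equals $B_N(t,\theta)$ times a product that is a \emph{unit} at $t = \theta^{q^j}$ when $N \not\equiv 1 \pmod{q-1}$ (here the combinatorics of digits forces the vanishing orders of numerator and denominator in the product to cancel), and $B_N(\theta^{q^j},\theta) \neq 0$ by Corollary \ref{Corollary1} (or Lemma \ref{LemmaATR6}) since $\theta^{q^j} \in \overline{\mathbb F}_q[\theta]$. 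For the case $q^j N = 1$ (only possible if $j \leq 0$ and $N = q^{-j}$, hence $N \equiv 1 \pmod{q-1}$ only if $-j$ is a multiple making $q^{-j} \equiv 1$, e.g. $N=1, j=0$), one checks directly that $L_N(\theta^{q^j}) = L_1(\theta) \neq 0$ because $\sum_{a\in A_{+,0}} = 1$ and the higher terms vanish, leaving the value $1$ — or more precisely the product acquires no zero because the factor $(1 - t/\theta)$ at $t=\theta$ is killed but compensated.

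The cleanest route, which I would actually take, is to use the identity from the Remark: for $N \equiv 1 \pmod{q-1}$ with $\ell_q(N) \geq q$,
$$L_N(t)\,\omega(t)^{n_0}\cdots\omega(t^{q^k})^{n_k} \cdot (\text{unit}) = c \cdot \widetilde\pi \cdot \theta^{-r} B_N(t,\theta),$$
and separately handle $\ell_q(N) < q$ via Lemma \ref{LemmaATR5} and Proposition \ref{PropositionATR1}(2) together with the classical fact (\cite{GOS}, Lemma 8.22.4) when $\ell_q(N)=1$. Since each $\omega(t^{q^l})$ has simple zeros exactly at $t^{q^l} = \theta^{q^m}$ for $m \geq 0$, i.e. at $t = \theta^{q^{m-l}}$, the function $L_N(t)$ — being $\widetilde\pi B_N(t,\theta)\theta^{-r}$ divided by all these $\omega$'s — has, at $t = \theta^{q^j}$, a zero precisely of order equal to the number of pairs $(l,m)$ with $m - l = j$, $m\geq 0$, $0 \leq l \leq k$, weighted by $n_l$, minus the order of vanishing of $B_N$ there (which is $0$). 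That count is positive iff there exists $l$ with $n_l \neq 0$ and $j + l \geq 0$, equivalently iff $q^j N > 1$ (the smallest $q^{e_n}$ dividing into $N$ is $q^{e_1}$, and $q^{j+e_1} \geq 1 \iff j \geq -e_1$, while $q^j N > 1 \iff q^j \sum q^{e_n} > 1$ which for $j \geq -e_1$ holds and fails otherwise except the boundary). The main obstacle I anticipate is bookkeeping the $j < 0$ case carefully: tracking exactly which factors of the infinite product $\prod_{m\geq 0}$ land on $t = \theta^{q^j}$ when $j$ is negative, and verifying that the strict inequality $q^j N > 1$ (not $\geq 1$) is the correct cutoff — this requires a clean argument that at $q^j N = 1$ the would-be zero from $\omega$ is exactly balanced, which should follow from $L_1(\theta) = \widetilde\pi/(\theta\exp_C(\widetilde\pi/(\theta-\theta)))$-type degenerate evaluation giving a finite nonzero limit, or more safely from the Euler product being a nonzero unit right at the boundary of convergence.
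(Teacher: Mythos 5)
There is a genuine gap. The paper's proof is a one-line specialization: since $a\in A_{+,d}$ has coefficients in $\mathbb F_q$, one has $a(\theta^{q^j})=a^{q^j}$ (interpreted via the unique $q^{-j}$-th root when $j<0$), so
$L_N(t)\mid_{t=\theta^{q^j}}=\sum_{d\geq 0}\sum_{a\in A_{+,d}}a^{q^jN-1}$,
a Goss zeta value; the two standard facts --- $\sum_{d}\sum_{a}a^{-n}\neq 0$ for $n\geq 1$, and $\sum_{d}\sum_{a}a^{n}=0$ iff $n\geq 1$ and $n\equiv 0\pmod{q-1}$ --- then give exactly the stated criterion for \emph{every} $N$ and $j$ simultaneously. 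Your route through the factorization $L_N(t)=\pm\widetilde\pi\,B_N(t,\theta)\prod_l\omega(t^{q^l})^{-n_l}$ can be made to work for the ``if'' direction (the $\omega^{-n_l}$ supply the zeros and the polynomial $B_N$ cannot cancel them, so you do not even need Corollary \ref{Corollary1} there), but it cannot cover the case $N\not\equiv 1\pmod{q-1}$ with $q^jN\geq 1$: the polynomial $B_N(t,\theta)$ and the identity you invoke only exist when $\ell_q(N)\equiv 1\pmod{q-1}$ (this is what makes $\mathbb B_s$ a polynomial in \cite{APT}, Lemma 7.6), and your parenthetical claim that ``the combinatorics of digits forces the vanishing orders \ldots to cancel'' is not an argument --- there is no such identity to apply. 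The Euler product only disposes of $q^jN<1$, so the non-vanishing for $N\not\equiv 1\pmod{q-1}$, $q^jN\geq 1$ is simply not proved in your proposal.

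Two secondary points. First, Corollary \ref{Corollary1} is stated for $a\in\overline{\mathbb F}_q[\theta]$ and $\ell_q(N)\geq 2q-1$, so it does not apply to $\theta^{q^j}$ with $j<0$ (which is not in $\overline{\mathbb F}_q[\theta]$) nor to small $\ell_q(N)$; fortunately, as noted above, you never actually need $B_N(\theta^{q^j},\theta)\neq 0$ for the direction where you use it. Second, the identity defining $B_N$ lives a priori in $\mathbb T_t$, whereas $v_\infty(\theta^{q^j})=-q^j<0$ puts the evaluation point outside the closed unit disk; one must first observe that both sides are entire (Lemma \ref{LemmaATR1} for $L_N$) before evaluating. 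The boundary case $q^jN=1$ forces $N=q^{-j}$, and is most cleanly settled by $L_{q^k}(t)=L_1(t^{q^k})$, whence $L_N(\theta^{q^j})=L_1(\theta)=1$, rather than by the ``compensated pole'' heuristic you sketch. I would recommend abandoning the $\omega$-factorization here and using the direct substitution $a(\theta^{q^j})=a^{q^j}$, which is both shorter and uniform in $N$.
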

\begin{proof} This comes from the following facts:\par
\noindent - $\forall n\geq 1,$ $\sum_{d\geq 0} \sum_{a\in A_{+,d}}\frac{1}{a^n}\not=0,$\par
\noindent - for $n\geq 0,$ $\sum_{d\geq 0} \sum_{a\in A_{+,d}}{a^n}=0$ if and only if $n\geq 1,$ $n\equiv 0\pmod{q-1}.$
 \end{proof}
We will need the following Lemma in the sequel:
\begin{lemma}\label{LemmaATR4}
Let $F(t)=\sum_{n\geq 0} \beta_n(t) \frac{1}{\theta^n}\in \mathbb F_q[t][[\frac{1}{\theta}]],\, \beta_n(t) \in \mathbb F_q[t],$ and let $\rho\in \mathbb R$ such that $F(t)$ converges on $\{ x\in \mathbb C_\infty, v_\infty(x)\geq \rho\}.$ Let $M\geq 1$ and set $F_M(t)=\sum_{n=0}^M\beta_n(t) \frac{1}{\theta^n} \in K_\infty[t].$  Let $\varepsilon \in \mathbb R, \varepsilon \geq \rho.$ Suppose that $F_M(t)$ has exactly $k\geq 1$ zeros in $\mathbb  C_\infty$ with valuation $\varepsilon.$  Then either $F(t)$ has $k$ zeros with valuation $\varepsilon$ or $F(t)$
 has at least  $\deg_tF_M(t)+1$ zeros with valuation $>\varepsilon.$ 
\end{lemma}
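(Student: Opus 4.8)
The plan is to compare the Newton polygons of $F(t)$ and of the truncation $F_M(t)$, viewing both as power series (resp. polynomials) in $t$ with coefficients in $K_\infty$ — or more precisely, to exploit the fact that $F$ and $F_M$ agree modulo $\theta^{-(M+1)}$ and that the extra tail $F(t)-F_M(t)$ has $v_\infty \geq M+1$ on its coefficient series. First I would write $F(t)=F_M(t)+G(t)$ with $G(t)=\sum_{n>M}\beta_n(t)\theta^{-n}$, and fix the annulus $\{v_\infty(x)=\varepsilon\}$. The hypothesis that $F_M$ has exactly $k\geq 1$ zeros (counted with multiplicity) of valuation $\varepsilon$ says that a certain segment of the Newton polygon of $F_M$, viewed in the variable $t$ with the $\infty$-adic valuation on coefficients, has horizontal length $k$ and slope $-\varepsilon$ (with the usual sign conventions of the paper).

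The key step is a Rouché/Weierstrass-type comparison. On the circle $v_\infty(x)=\varepsilon$, one estimates $v_\infty(G(x))$ from below: since each $\beta_n(t)$ contributes, after substituting $x$ with $v_\infty(x)=\varepsilon$, a term of valuation at least $n - (\deg_t \beta_n)\cdot\max\{-\varepsilon,0\}$ — and here one must use the convergence hypothesis on $\{v_\infty \geq \rho\}$ together with $\varepsilon\geq\rho$ to control $\deg_t\beta_n$ as in Lemma \ref{LemmaATR1} — one gets that $v_\infty(G(x))$ is strictly larger than the value taken by the dominant monomials of $F_M(x)$ on that circle, unless $F_M(x)$ itself is "small" there. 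Concretely: let $m=\deg_t F_M(t)$ and consider the Newton polygon of $F_M$ in $t$. If the segment of slope $-\varepsilon$ has length exactly $k$, then for $x$ with $v_\infty(x)=\varepsilon$ outside the finitely many zeros, $v_\infty(F_M(x))$ equals a well-defined value $c$, and I claim $v_\infty(G(x))>c$ on the whole circle; then $F$ and $F_M$ have the same number of zeros of valuation $\varepsilon$, namely $k$. If instead the slope-$-\varepsilon$ behaviour of $F_M$ forces $v_\infty(F_M(x))$ to be large (this is the alternative where the comparison fails), then one argues that the full Newton polygon of $F$ must have a vertex pushing the slope-$(\geq\varepsilon$ in the relevant sign$)$ part to have horizontal length at least $m+1$: indeed the monomials of $F$ of $t$-degree $>m$ come entirely from $G$, have coefficient valuations $\geq M+1$, and together with the leading coefficient of $F_M$ they produce, after a standard Newton-polygon lower-bound argument, at least $\deg_t F_M(t)+1$ roots of valuation strictly exceeding $\varepsilon$.

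The main obstacle I anticipate is making the dichotomy clean: the two alternatives in the statement are genuinely exclusive outcomes of whether the "mass" of $F_M$ on the circle $v_\infty=\varepsilon$ dominates the tail $G$ or not, and the delicate point is bounding $v_\infty(G(x))$ uniformly on that circle using only the stated convergence on $\{v_\infty\geq\rho\}$ — this is where one invokes the growth bound $\deg_t\beta_n=O(\log n)$ (the analogue of Lemma \ref{LemmaATR1}, which holds precisely because $F$ is entire / converges on a half-plane) so that the term $n-(\deg_t\beta_n)\max\{-\varepsilon,0\}$ tends to $+\infty$ with $n$ and in particular exceeds $c$ for all $n>M$ once $M$ is in the regime of interest. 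Once that uniform lower bound is in hand, the argument is the standard ultrametric Rouché theorem: equality of zero-counts on the circle in the first case, and in the second case a forced appearance of many new zeros at valuation $>\varepsilon$ coming from the high-$t$-degree monomials of $F$ that simply are not present in $F_M$. I would phrase the final counting via the slopes of the Newton polygon of $F$ rather than via a contour-integral version of Rouché, since we are in equal characteristic and the Newton-polygon formalism is both available and self-contained here.
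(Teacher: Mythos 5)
Your plan lands, in its last sentence, exactly where the paper's proof lives: a direct comparison of the Newton polygons of $F$ and $F_M$ in the variable $t$. The paper's argument is three lines: either the slope-$(-\varepsilon)$ side of the polygon of $F_M$ is a portion of a side of the polygon of $F$, in which case $F$ has $k$ zeros of valuation $\varepsilon$; or it is not, in which case the polygon of $F$ has a vertex at some abscissa $k'>\deg_tF_M(t)$ lying strictly below that side, so every side of the polygon of $F$ up to abscissa $k'$ has slope $<-\varepsilon$ and $F$ has $k'\geq \deg_tF_M(t)+1$ zeros of valuation $>\varepsilon$. So the approach is essentially the same, but two points in your write-up deserve correction. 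First, the growth bound $\deg_t\beta_n=O(\log n)$ of Lemma \ref{LemmaATR1} is a property of the specific series $L_N(t)$, not of an arbitrary $F$ satisfying the hypotheses of Lemma \ref{LemmaATR4}; fortunately it is also not needed --- convergence on $\{v_\infty(x)\geq\rho\}$ with $\varepsilon\geq\rho$ is exactly what makes the Newton polygon of $F$ well defined with only finitely many sides of slope $\geq -\varepsilon$, and no quantitative degree bound enters. Second, your dichotomy (``does $G$ dominate $F_M$ on the circle, unless $F_M$ is small there'') is not something you can decide by an a priori estimate on $v_\infty(G(x))$; the claim $v_\infty(G(x))>c$ on the whole circle is \emph{equivalent} to the favourable branch, not a lemma you prove first. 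The one observation that makes the unfavourable branch produce the stated count --- and which your sketch gestures at but does not isolate --- is this: for every $j$ at which $F_M$ has a nonzero coefficient, the coefficient of $t^j$ in $F$ has the \emph{same} valuation (the leading $\theta^{-n}$-term is unchanged), while every other coefficient of $F$ has valuation $\geq M+1$, hence lies on or above the entire polygon of $F_M$ whenever $j\leq\deg_tF_M(t)$. Therefore a point disrupting the slope-$(-\varepsilon)$ side can only occur at abscissa $>\deg_tF_M(t)$, which is precisely what forces at least $\deg_tF_M(t)+1$ zeros of valuation $>\varepsilon$ in the second alternative. With that observation made explicit, your argument closes and coincides with the paper's.
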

\begin{proof}  Let's assume that the side of the Newton polygon of $F_M(t)$ corresponding to the $k$ zeros of valuation $\varepsilon$  is not a portion of a side of the Newton polygon of $F(t),$ then $F(t)$ has a side of slope $-\varepsilon'<-\varepsilon$ with end point of abscissa $k'>\deg_tF_M(t).$ Thus the Newton polygon of $F(t)$ delimited by the vertical axis  of abscissas $0$ and $k'$ has only sides of slope $\leq -\varepsilon'.$ Thus $F(t)$ has $k'$ zeros of valuation $\geq \varepsilon'.$
\end{proof}

\subsection{An example}${}$\par
For the convenience of the reader, we treat a basic example:  $N=1.$ We set $\ell_0=1$ and for $d\geq 1,$ $\ell_d=(\theta-\theta^q)\cdots (\theta-\theta^{q^d}).$
\begin{lemma}\label{LemmaEX1} Let $d\geq 0.$ Then:
$$\sum_{a\in A_{+,d}}\frac{1}{a}= \frac{1}{\ell_d}.$$
\end{lemma}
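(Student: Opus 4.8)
The plan is to compute the sum $\sum_{a\in A_{+,d}}\frac{1}{a}$ by a direct induction on $d$, recognizing $\ell_d=(\theta-\theta^q)\cdots(\theta-\theta^{q^d})$ as precisely the product that arises in the Carlitz module formalism (indeed $\ell_d = (-1)^d D_d / L_d$ in Goss's notation, up to sign conventions, but I will not need that identification). The base case $d=0$ is immediate since $A_{+,0}=\{1\}$ and $\ell_0=1$. For the inductive step, I would write each monic $a$ of degree $d$ as $a=(\theta-\beta)b+c$ is not quite the right decomposition; instead the cleanest route is to use the standard identity that relates sums over degree $d$ to sums over degree $d-1$.

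The key step is the recursion
\[
\sum_{a\in A_{+,d}}\frac{1}{a} \;=\; \frac{1}{\theta^{q^{d-1}}-\theta}\sum_{b\in A_{+,d-1}}\frac{1}{b},
\]
which one proves as follows. Write a monic $a$ of degree $d$ uniquely as $a = (\theta^{q^{d-1}}-\theta)\,\tau(b)\cdot(\text{something})$ — more concretely, I would use the fact, already invoked repeatedly in the paper (the computation of $(t_i-\theta^{q^l})\omega(t_i)|_{t=\theta^{q^l}}$ and formula (24) of \cite{PEL}), that the Carlitz exponential/the $\ell_d$'s satisfy $\exp_C(z) = \sum_{d\ge 0} z^{q^d}/\ell_d$ with $\ell_d$ exactly as defined here. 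Alternatively, and most self-containedly, I would group the monic polynomials of degree $d$ according to their reduction: every $a\in A_{+,d}$ can be written $a = b\cdot(\theta - \lambda) + r$ with $\deg r < d-1$... — actually the slick argument is the classical one: $\sum_{a\in A_{+,d}} \frac1a$ is the coefficient extraction from the partial fraction expansion, and one shows by comparing $\exp_C$ coefficients that it equals $1/\ell_d$.

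Concretely I would argue: for $d\ge 1$, partition $A_{+,d}$ by writing $a = (\theta^{q}-\theta)\cdot(\cdots)$ is messy, so let me instead use the telescoping via the Carlitz factorials: it is classical (Goss, Chapter 3) that $\exp_C(\widetilde\pi z) = \widetilde\pi z\prod(1-z/a)$ over monic $a$, hence taking logarithmic derivatives and comparing the $1/\theta^{?}$ expansion gives that $\sum_{d}\big(\sum_{a\in A_{+,d}}\frac1a\big)X^{q^d}$ is the compositional inverse structure of $\sum X^{q^d}/\ell_d = \Log_C$... The honest simplest proof is the induction
\[
\sum_{a\in A_{+,d}}\frac{1}{a}=\sum_{\lambda\in\mathbb F_q}\sum_{b\in A_{+,d-1}}\frac{1}{(\theta-\lambda)b + \mu_\lambda}
\]
is not clean either.

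The main obstacle — and the thing I would need to get right — is choosing the correct bijective reindexing of $A_{+,d}$ that makes the denominator factor. The cleanest is: since $\ell_d = (\theta-\theta^q)\ell_{d-1}^{(1)}$ where $(\cdot)^{(1)}$ denotes the Frobenius twist $\theta\mapsto\theta^q$, and since the map $b\mapsto$ (the unique monic $a$ of degree $d$ that is "$\equiv b^{(1)}$ mod lower order and divisible appropriately") realizes $A_{+,d}\leftrightarrow A_{+,d-1}$, I expect the recursion $\sum_{A_{+,d}}\frac1a = \frac{1}{\theta-\theta^q}\big(\sum_{A_{+,d-1}}\frac1a\big)^{(1)}$ (Frobenius twist), which together with $\ell_d=(\theta-\theta^q)\ell_{d-1}^{(1)}$ and the base case closes the induction immediately. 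I would verify this recursion for $d=1$ by hand ($\sum_{\lambda\in\mathbb F_q}\frac{1}{\theta-\lambda} = \frac{-1}{\theta^q-\theta}=\frac{1}{\theta-\theta^q}$, matching $1/\ell_1$), and in general deduce it from the standard fact that $A_{+,d}$ is, under reduction modulo $\theta^q-\theta$-type relations, the Frobenius image of $A_{+,d-1}$ shifted — this is exactly the content behind Carlitz's original computation and can be cited from \cite{GOS}, Chapter 3, if a fully combinatorial proof is deemed too long.
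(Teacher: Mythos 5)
Your final argument rests on two claims that are both false. First, the identity $\ell_d=(\theta-\theta^q)\,\ell_{d-1}^{(1)}$ does not hold: with $(\cdot)^{(1)}$ the twist $\theta\mapsto\theta^q$ one has $\ell_{d-1}^{(1)}=(\theta^q-\theta^{q^2})\cdots(\theta^q-\theta^{q^d})$, so already for $d=2$ you get $(\theta-\theta^q)(\theta^q-\theta^{q^2})\neq(\theta-\theta^q)(\theta-\theta^{q^2})=\ell_2$. The quantity that does satisfy a Frobenius-twist recursion is the \emph{product} $D_d=\prod_{a\in A_{+,d}}a$, via $D_d=(\theta^{q^d}-\theta)D_{d-1}^q$; you have transferred that recursion to $\ell_d$, where it fails. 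Consequently your proposed recursion $\sum_{A_{+,d}}\frac1a=\frac{1}{\theta-\theta^q}\bigl(\sum_{A_{+,d-1}}\frac1b\bigr)^{(1)}$ is also false for $d\geq 2$ (it only passes your $d=1$ check because the twist acts on the constant $1$): e.g.\ for $q=2$, $d=2$ it predicts $\frac{1}{\theta^3(\theta+1)^3}$, whereas the true value is $\frac{1}{\theta^2(\theta+1)^2(\theta^2+\theta+1)}=\frac1{\ell_2}$. The correct recursion, equivalent to the lemma, is $\sum_{a\in A_{+,d}}\frac1a=\frac{1}{\theta-\theta^{q^d}}\sum_{b\in A_{+,d-1}}\frac1b$, with the \emph{new} factor and no twist; and it cannot come from any bijective reindexing $A_{+,d}\leftrightarrow A_{+,d-1}$, since these sets have cardinalities $q^d$ and $q^{d-1}$.

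The missing idea is the one the paper uses: work with the polynomial $e_d(X)=\prod_{\deg_\theta a<d}(X-a)$, which by Carlitz is $\mathbb F_q$-linear with $X$-coefficient $\frac{D_d}{\ell_d}$ and satisfies $e_d(\theta^d)=D_d$. Since $e_d(X-\theta^d)=\prod_{a\in A_{+,d}}(X-a)$, taking the logarithmic derivative in $X$ at $X=0$ gives $-\sum_{a\in A_{+,d}}\frac1a=\frac{e_d'(-\theta^d)}{e_d(-\theta^d)}=\frac{D_d/\ell_d}{-D_d}=-\frac{1}{\ell_d}$, because the derivative of an additive polynomial is its linear coefficient. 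Your proposal gestures at this circle of ideas (logarithmic derivatives, $\exp_C$), but never commits to a correct computation, and the one concrete path you do commit to does not work.
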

\begin{proof} This is a well-known  consequence of a result of Carlitz (\cite{GOS}, Theorem 3.1.5). Let's give a proof for the convenience of the reader. We can assume that $d\geq 1.$ Set:
$$e_d(X)=\prod_{a\in A, \deg_\theta a<d}(X-a)\in A[X].$$
Then (\cite{GOS}, Theorem 3.1.5):
$$e_d(X)= \sum_{i=0}^d \frac{D_d}{D_i\ell_{d-i}^{q ^i}}X^{q^i},$$
where $D_0=1, $ and for $i\geq 1,$ $D_i=(\theta^{q ^i}-\theta)D_{i-1}^q.$ Now observe that:
$$\frac{\frac{d}{dX}(e_d(X-\theta^d))}{e_d(X-\theta^d)}\mid_{X=0}= -\sum_{a\in A_{+,d}}\frac{1}{a}.$$
Since $e_d(\theta^d)=D_d$ (\cite{GOS}, Corollary 3.1.7), we get the desired result.
\end{proof}
\begin{lemma}\label{LemmaEX2} Let $d\geq 0.$ then:
$$\sum_{a\in A_{+,d}}\frac{a(t)}{a} = \frac{(t-\theta) \cdots (t-\theta^{q^{d-1}})}{\ell_d}.$$
\end{lemma}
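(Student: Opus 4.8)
The plan is to reduce the claimed identity to Lemma \ref{LemmaEX1} by a substitution trick analogous to the one used in the proof of that lemma, exploiting the fact that $a(t)$ for $a\in A_{+,d}$ runs over the shifted polynomials $e_d(X-\theta^d)$ evaluated suitably. First I would recall, as in the proof of Lemma \ref{LemmaEX1}, the Carlitz polynomial $e_d(X)=\prod_{a\in A,\ \deg_\theta a<d}(X-a)=\sum_{i=0}^d\frac{D_d}{D_i\ell_{d-i}^{q^i}}X^{q^i}$, so that $e_d(X-\theta^d)=\prod_{a\in A_{+,d}}(X-a)$ (the monic degree-$d$ polynomials are exactly $\theta^d+b$ with $\deg_\theta b<d$). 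Then for an indeterminate (or parameter) $t$ one has the partial-fraction-type expansion
$$\frac{\frac{d}{dX}\bigl(e_d(X-\theta^d)\bigr)}{e_d(X-\theta^d)}=\sum_{a\in A_{+,d}}\frac{1}{X-a},$$
and evaluating at $X=t$ gives $\sum_{a\in A_{+,d}}\frac{1}{t-a}=\frac{\frac{d}{dX}(e_d(X-\theta^d))|_{X=t}}{e_d(t-\theta^d)}$.

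That handles $\sum 1/(t-a)$, not $\sum a(t)/a$, so the second step is the algebraic identity relating the two. Writing $a(t)=t-(\,t-a(t)\,)$ and noting $t-a(t)$ is independent of... — more carefully, the cleanest route is to observe directly that $\sum_{a\in A_{+,d}}\frac{a(t)}{a}$ is an $\mathbb F_q(\theta)$-rational function of $t$ of degree $\le d-1$ in $t$ (the sum $S_d=\sum a(t)^N/a$ satisfies $\deg_t S_d=dN$, here $N=1$ so $\deg_t\le d$, and the leading term $t^d\cdot\sum 1/a$ must be subtracted off) and to pin it down by its values. Concretely, I would show $\sum_{a\in A_{+,d}}\frac{a(t)}{a}-t^d\sum_{a\in A_{+,d}}\frac1a$ has degree $<d$ in $t$; alternatively and more robustly, evaluate the right-hand side candidate $\frac{(t-\theta)\cdots(t-\theta^{q^{d-1}})}{\ell_d}$ and the left-hand side at $t=\theta^{q^j}$ for $j=0,\dots,d-1$: by Lemma \ref{LemmaATR3} (with $N=1$, which is $\equiv 1\pmod{q-1}$, and $q^j\cdot 1>1$ for... actually one needs $j\ge 1$; the case $t=\theta$ is handled separately since $a(\theta)/a$ has a term equal to $1$) both sides vanish there, giving $d$ coincidences for two polynomials in $t$ of degree $\le d$ with matching leading behaviour.

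The real content — and the step I expect to be the main obstacle — is identifying the leading coefficient in $t$. Both sides are polynomials in $t$ over $K$ of degree exactly $d-1$ (the left side a priori could have degree $d$, but the $t^d$-coefficient is $\sum_{a\in A_{+,d}}1/a=1/\ell_d$... wait, that is nonzero, so in fact $\deg_t=d$ on the left and $d-1$ is wrong). Let me instead match at $t=\theta^{q^j}$ for $j=1,\dots,d-1$ (that is $d-1$ points where both sides vanish by Lemma \ref{LemmaATR3}), note that $\frac{(t-\theta)(t-\theta^q)\cdots(t-\theta^{q^{d-1}})}{\ell_d}$ has $t^d$-coefficient $\frac{1}{\ell_d}$, which by Lemma \ref{LemmaEX1} equals the $t^d$-coefficient $\sum_{a\in A_{+,d}}\frac1a$ of the left-hand side, so the difference of the two sides is a polynomial in $t$ of degree $\le d-1$ vanishing at the $d-1$ points $\theta^{q},\dots,\theta^{q^{d-1}}$ together with $t=\theta$ (the latter needing a direct check that $\sum_{a\in A_{+,d}}\frac{a(\theta)}{a}=0$, which follows since $a(\theta)$ as a polynomial identity gives $a(\theta)=a$, so the sum is $\#A_{+,d}\bmod p=0$ in characteristic $p$ as $q^d\equiv 0$)—that is $d$ roots of a degree-$\le d-1$ polynomial, forcing it to be identically zero. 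The delicate bookkeeping is exactly this leading-coefficient comparison plus the special value at $t=\theta$, so I would present those two verifications carefully and leave the vanishing at the $\theta^{q^j}$ to Lemma \ref{LemmaATR3}.
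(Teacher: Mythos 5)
Your proof is correct and follows essentially the same route as the paper's: the paper also pins down $S=\sum_{a\in A_{+,d}}\frac{a(t)}{a}$ by noting it has degree $d$ in $t$ with leading coefficient $\frac{1}{\ell_d}$ (Lemma \ref{LemmaEX1}) and vanishes at the $d$ points $t=\theta^{q^i}$, $i=0,\dots,d-1$. One small caveat: Lemma \ref{LemmaATR3} concerns the full series $L_1(t)$ rather than the single-degree sum $S_d$, so the vanishing you need is really $\sum_{a\in A_{+,d}}a^{q^j-1}=0$ for $0\le j\le d-1$, which follows from the elementary fact quoted in the proof of Lemma \ref{LemmaATR1} (specialize $s=j(q-1)<d(q-1)$ variables to $\theta,\theta^q,\dots,\theta^{q^{j-1}}$), a fact the paper likewise uses implicitly.
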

\begin{proof} We can assume that $d\geq 1.$  Set:
$$S=\sum_{a\in A_{+,d}}\frac{a(t)}{a}.$$
Then for $i=0, \ldots, d-1,$ we have:
$$S\mid_{t=\theta^{q^i}}=0.$$
Furthermore, by Lemma \ref{LemmaEX1}, $S$ has degree $d$ in $t$ and the coefficient of $t^d$ is $\frac{1}{\ell_d}.$ The Lemma follows.
\end{proof}
\begin{lemma}\label{LemmaEX3}
The edge points of the Newton polygon of $L_1(t)$ are $(d, q\frac{q^d-q}{q-1}), d\in \mathbb N.$
\end{lemma}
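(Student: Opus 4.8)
The plan is to make $L_1(t)$ completely explicit via Lemma~\ref{LemmaEX2} and then to read off the $\infty$-adic valuation of each coefficient of $L_1(t)$, viewed as a power series $L_1(t)=\sum_{n\geq 0}c_nt^n$ with $c_n\in K_\infty$; the Newton polygon is the lower convex hull of the points $(n,v_\infty(c_n))$, and its vertices are the edge points. By Lemma~\ref{LemmaEX2},
$$L_1(t)=\sum_{d\geq 0}S_d(t),\qquad S_d(t)=\frac{\prod_{i=0}^{d-1}(t-\theta^{q^i})}{\ell_d}\in K_\infty[t],$$
a polynomial of degree $d$ in $t$ with leading coefficient $\ell_d^{-1}$, where $v_\infty(\ell_d)=-(q+q^2+\cdots+q^d)$. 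Fix $n\geq 0$. Only the terms with $d\geq n$ contribute to $c_n$, and the coefficient of $t^n$ in $S_d(t)$ equals $\pm\,\ell_d^{-1}\,e_{d-n}(\theta^{q^0},\dots,\theta^{q^{d-1}})$, where $e_k$ denotes the $k$-th elementary symmetric polynomial.

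Now I would run a one-term-dominates estimate. For $d=n$ this coefficient is exactly $\ell_n^{-1}$, of valuation $q+q^2+\cdots+q^n$. For $d>n$, every monomial of $e_{d-n}(\theta^{q^0},\dots,\theta^{q^{d-1}})$ is a product of $d-n$ of the $\theta^{q^i}$ with $0\leq i\leq d-1$, so its $\theta$-degree is at most $q^n+q^{n+1}+\cdots+q^{d-1}$ (the sum of the $d-n$ largest of the $q^i$); hence
$$v_\infty\bigl([t^n]S_d(t)\bigr)\;\geq\;(q+q^2+\cdots+q^d)-(q^n+q^{n+1}+\cdots+q^{d-1})\;=\;(q+q^2+\cdots+q^{n-1})+q^d,$$
which, since $q^d>q^n$ for $d>n$, is strictly larger than $q+q^2+\cdots+q^n$. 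Therefore in $c_n=\sum_{d\geq n}[t^n]S_d(t)$ the term $d=n$ strictly dominates all the others, so $c_n\neq 0$ and $v_\infty(c_n)=q+q^2+\cdots+q^n$.

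Finally, $n\mapsto q+q^2+\cdots+q^n$ has strictly increasing first differences $q^{n+1}$, hence is strictly convex, so every point $\bigl(d,\,q+q^2+\cdots+q^d\bigr)$ is a vertex of the lower convex hull of $\{(n,v_\infty(c_n))\}_{n\geq0}$; these are exactly the edge points of the Newton polygon of $L_1(t)$. I do not see a genuine obstacle here once Lemma~\ref{LemmaEX2} is in hand: the whole computation is the domination estimate above, the only point needing (a minimal amount of) care being the trivial bound on the $\theta$-degrees of the monomials of the symmetric polynomials. As a consistency check one can avoid Lemma~\ref{LemmaEX2} altogether: Pellarin's identity $L_1(t)\,\omega(t)=\widetilde\pi/(\theta-t)$ (\cite{PEL}, Theorem~1) together with the product expansions of $\omega(t)$ and $\widetilde\pi$ gives $L_1(t)=u\prod_{j\geq1}\bigl(1-t\theta^{-q^j}\bigr)$ with $u=\prod_{j\geq1}(1-\theta^{1-q^j})^{-1}\in K_\infty^\times$ of valuation $0$; so the zeros of $L_1(t)$ are precisely the simple points $\theta^{q^j}$ $(j\geq1)$ of valuation $-q^j$, and the Newton polygon, starting from $(0,0)$, has successive sides of slopes $q,q^2,q^3,\dots$, each of horizontal length $1$, recovering the same edge points.
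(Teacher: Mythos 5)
Your argument is correct and is essentially the paper's own proof: both use Lemma \ref{LemmaEX2} to write $S_d(t)=\ell_d^{-1}\prod_{i=0}^{d-1}(t-\theta^{q^i})$, observe that the coefficient of $t^n$ in $S_n(t)$ is exactly $\ell_n^{-1}$ of valuation $q+q^2+\cdots+q^n$, and show by the same elementary-symmetric-function bound that the contributions from $S_d(t)$ with $d>n$ have strictly larger valuation, after which convexity of $n\mapsto q\frac{q^n-1}{q-1}$ finishes the proof. Note that your computation (like the paper's own proof, and like your consistency check via Pellarin's identity) yields the ordinates $q\frac{q^d-1}{q-1}=q+\cdots+q^d$ rather than the $q\frac{q^d-q}{q-1}$ printed in the statement, which must be a typo since $\alpha_0=1$ forces the polygon to start at $(0,0)$.
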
 
\begin{proof} Let's write:
$$L_1(t)=\sum_{d\geq 0} S_d(t),$$
where:
$$S_d(t) =\sum_{a\in A_{+,d}}\frac{a(t)}{a}.$$
Let $d\in \mathbb N$ and let $d'>d.$ Let $x\in K$ be the coefficient of $t^d$ in $S_{d'}(t).$ Then, by Lemma \ref{LemmaEX2}, we get:
$$v_\infty(x)\geq -v_\infty(\ell_{d'})-q^{d}-\cdots -q^{d'-1}> q\frac{{q^d}-1}{q-1}.$$
Thus, if we write:
$$L_1(t)=\sum_{d\geq 0} \alpha_d t^d \in K_\infty[[t]], \alpha_d\in K_\infty, d\in \mathbb N,$$
by the above observation and again by Lemma \ref{LemmaEX2}, we get:
$$\forall d\geq 0, v_\infty(\alpha_d)=q\frac{{q^d}-1}{q-1}.$$
\end{proof}
This latter Lemma implies the following formula due to F. Pellarin (\cite{PEL}, Theorem 1):
\begin{proposition}\label{PropositionEX1}
Let $\lambda_\theta\in \mathbb C_\infty^\times$ be a fixed $(q-1)$th root of $-\theta.$ Set:
$$\widetilde{\pi}= \lambda_\theta \theta \prod_{j\geq 1} (1-\theta^{1-q^j})^{-1}\in \mathbb C_\infty^\times.$$
Then:
$$(\theta-t) L_1(t) =\frac{\widetilde{\pi}}{\lambda_\theta} \prod_{j\geq 0} (1-\frac{t}{\theta^{q^j}}).$$
\end{proposition}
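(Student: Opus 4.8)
The goal is to recover Pellarin's identity
\[
(\theta-t)\,L_1(t) = \frac{\widetilde{\pi}}{\lambda_\theta}\prod_{j\geq 0}\Bigl(1-\frac{t}{\theta^{q^j}}\Bigr)
\]
from the Newton polygon data of Lemma~\ref{LemmaEX3}. The plan is to compare the two sides as entire functions of $t$ over $\mathbb{C}_\infty$ and show they agree by identifying their zeros (with multiplicity) and a single leading coefficient. First I would record that $L_1(t)$ is entire (a special case of Lemma~\ref{LemmaATR1}, or directly from Lemma~\ref{LemmaEX3}, since $v_\infty(\alpha_d)=q\frac{q^d-1}{q-1}\to\infty$), so the product $(\theta-t)L_1(t)$ is also entire and admits a Weierstrass-type factorization determined by its zeros.

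Next I would locate the zeros. By Lemma~\ref{LemmaATR3} (with $N=1$), $L_1(t)$ vanishes at $t=\theta^{q^j}$ for every $j\geq 0$ (here $q^jN=q^j\geq 1$, and for $j<0$ we have $q^jN\le 1$ so those are excluded), and these are exactly the points forced to be zeros; conversely the Newton polygon of Lemma~\ref{LemmaEX3} has, between abscissas $d$ and $d+1$, a side of slope $-\bigl(q\frac{q^{d+1}-1}{q-1}-q\frac{q^d-1}{q-1}\bigr) = -q^{d+1}$, so $L_1(t)$ has exactly one zero of valuation $q^{d+1}$ for each $d\geq 0$ and no others. Since $v_\infty(\theta^{q^j}) = -q^j$... wait, one must be careful: $v_\infty(\theta^{q^j})=-q^j<0$, whereas the Newton polygon slopes are negative so the zeros have positive valuation. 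Let me instead phrase it via the Newton polygon of $L_1(t)$ as a power series in $1/\theta$ with coefficients in $\mathbb{F}_q[t]$: the edge points $(d,q\frac{q^d-q}{q-1})$ in Lemma~\ref{LemmaEX3} govern the $\theta$-adic behaviour and, combined with Lemma~\ref{LemmaEX2} which gives each truncation $\sum_{a\in A_{+,d}}\frac{a(t)}{a}=\ell_d^{-1}(t-\theta)\cdots(t-\theta^{q^{d-1}})$ explicitly, one extracts that the zeros of $L_1(t)$ in $t$ are precisely $\{\theta^{q^j}: j\geq 0\}$, each simple. This is the step I expect to require the most care: one must run the argument of Lemma~\ref{LemmaATR4} to pass from the zeros of the truncations $L_1^{(M)}(t)$ (which are among the $\theta^{q^i}$ by Lemma~\ref{LemmaEX2}) to the zeros of the limit $L_1(t)$, checking that no mass escapes, i.e.\ that $L_1(t)$ does not acquire extra zeros or lose any.

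Once the zero set is pinned down, both sides of the claimed identity are entire functions in $t$ with the same simple zeros $\theta, \theta^q, \theta^{q^2},\dots$ and no others, and $(\theta-t)L_1(t)$ has the extra zero at $t=\theta$ matching the $j=0$ factor on the right — so the quotient is an entire function with no zeros, and by the Weierstrass factorization over $\mathbb{C}_\infty$ (an entire function with prescribed zeros is determined up to a constant, as the relevant convergence is automatic from the valuations growing like $q^j$) the quotient is a constant $c\in\mathbb{C}_\infty^\times$. Finally I would evaluate at $t=0$: the left side is $\theta\cdot L_1(0) = \theta\sum_{d\geq0}\sum_{a\in A_{+,d}}\frac1a = \theta\sum_{d\geq0}\ell_d^{-1}$ by Lemma~\ref{LemmaEX1}, and the right side is $\frac{\widetilde\pi}{\lambda_\theta}\prod_{j\geq0}1 = \frac{\widetilde\pi}{\lambda_\theta}$; using $\widetilde\pi = \lambda_\theta\theta\prod_{j\geq1}(1-\theta^{1-q^j})^{-1}$, it remains to check $\sum_{d\geq 0}\ell_d^{-1} = \prod_{j\geq1}(1-\theta^{1-q^j})^{-1}$, which is the classical identity $\sum_{d\ge0}\frac{1}{\ell_d}=\frac{1}{\widetilde\pi/(\lambda_\theta\theta)}$ (equivalently a standard Carlitz computation, \cite[Chapter~3]{GOS}), giving $c=1$. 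This yields the stated formula, and incidentally re-proves Pellarin's Theorem 1 for $s=1$.
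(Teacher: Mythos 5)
Your skeleton is the same as the paper's: use the Newton polygon of Lemma \ref{LemmaEX3} to see that $(\theta-t)L_1(t)$ has exactly one simple zero of valuation $-q^{j}$ for each $j\geq 0$ and no others, identify these zeros with the $\theta^{q^j}$, conclude that the two sides differ by a multiplicative constant, and evaluate at one point. The fatal problem is in the evaluation. You set $t=0$ and write $L_1(0)=\sum_{d}\sum_{a\in A_{+,d}}\frac{1}{a}=\sum_d \ell_d^{-1}$, but $a(t)\mid_{t=0}=a(0)$ is the \emph{constant coefficient} of the polynomial $a$, not $1$; by Lemma \ref{LemmaEX2} one actually has $L_1(0)=\sum_{d\geq 0}(-1)^d\theta^{\frac{q^d-1}{q-1}}\ell_d^{-1}$, which is not $\zeta_A(1)$. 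Worse, the identity you then need, $\sum_{d\geq 0}\ell_d^{-1}=\prod_{j\geq 1}(1-\theta^{1-q^j})^{-1}$, is simply false: the left-hand side equals $1+\frac{1}{\theta-\theta^q}+\cdots=1+O(\theta^{-q})$, while the right-hand side equals $1+\theta^{1-q}+\cdots$ and already has a nonzero term of valuation $q-1$. The paper instead normalizes at $t=\theta$: since $\sum_{a\in A_{+,d}}1=q^d=0$ in characteristic $p$ for $d\geq 1$, one gets $L_1(\theta)=1$, and after cancelling the factor vanishing at $\theta$ the right-hand side evaluates to $\frac{\widetilde\pi}{\lambda_\theta}\prod_{j\geq 1}(1-\theta^{1-q^j})=\theta$, which pins the constant down immediately.

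A second error feeds into this. You cite Lemma \ref{LemmaATR3} to claim $L_1(\theta^{q^j})=0$ for all $j\geq 0$, but the condition there is $q^jN>1$ \emph{strictly}, so for $N=1$ it gives vanishing only for $j\geq 1$; indeed $L_1(\theta)=1\neq 0$, which is precisely what makes the correct normalization possible. As written, your claims are inconsistent: if $L_1$ vanished at $\theta$, then $(\theta-t)L_1(t)$ would have a double zero there and could not be a constant multiple of $\prod_{j\geq 0}(1-t/\theta^{q^j})$, whose zero at $\theta$ is simple. The Newton polygon resolves this correctly (its first side accounts for a single zero of valuation $-q$ and none of valuation $-1$), and your later remark that the factor $\theta-t$ "supplies" the $j=0$ zero is the right picture — but you need to delete the incorrect application of Lemma \ref{LemmaATR3} at $j=0$ and repair the determination of the constant, for instance by switching to the paper's evaluation at $t=\theta$.
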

\begin{proof} We observe that:
$$\forall n\geq 1, L_1(t)\mid_{t=\theta^{q^n}}= 0,$$
$$L_1(\theta)=1.$$
By Lemma \ref{LemmaEX3}, the entire function $(t-\theta) L_1(t)$ has simple zeros in $K_\infty$ and if $x\in K_\infty$ is such a zero, $v_\infty (x) \in \{ -q^i, i\in \mathbb N\}.$ Thus, there exists $\alpha \in \mathbb C_\infty^\times$ such that:
$$(t-\theta) L_1(t) =\alpha \prod_{j\geq 0} (1-\frac{t}{\theta^{q^j}}).$$
But, observe that:
$$\frac{\widetilde{\pi}}{\lambda_\theta} \prod_{j\geq 1} (1-\frac{t}{\theta^{q^j}})\mid_{t=\theta}=\theta.$$
Therefore:
$$\alpha= \frac{-\widetilde{\pi}}{\lambda_\theta}.$$
\end{proof}

\subsection{Eigenvalues  and Bernoulli-Carlitz numbers}${}$\par

In this paragraph, we slightly change our point of view. Let $t$ be an indeterminate over $\mathbb C_\infty$ and let $\varphi: \mathbb C_\infty[[\frac{1}{t}]]\rightarrow\mathbb C_\infty[[\frac{1}{t}]]$ be the continuous (for the $\frac{1}{t}$-adic topology) morphism of $\mathbb C_\infty$-algebras such that $\varphi(t)=t^q.$ We first recall some consequences of the work of  F. Demeslay's  (see the appendix of \cite{APT} or \cite{DEM}) generalizing the work of L. Taelman (\cite{TAE1.1}).\par
Let $N\geq 1, N\equiv 1\pmod{q-1},$ $\ell_q(N)\geq q.$ Write $N=\sum_{l=0}^kn_l q^l, $ $n_l\in \{0, \ldots, q-1\}, l=0, \ldots, k, $ and $n_k\not =0.$ We set $B=K[t].$  Let $\phi^{(N)}: B\rightarrow B\{\varphi\}$ be the morphism of $K$-algebras given by:
$$\phi^{(N)}_t= (\prod_{l=0}^k (\theta^{q^l }-t)^{n_l}) \varphi +t.$$
Since $t$ is transcendental over $\mathbb F_q,$ there exists a unique "power series" $\exp_{\phi^{(N)}}\in K(t)\{\{\varphi\}\}$ such that:
$$\exp_{\phi^{(N)}}\equiv 1\pmod{\varphi},$$
$$\exp_{\phi^{(N)}} t=\phi^{(N)}_t \exp_{\phi^{(N)}}.$$
One can easily see that:
$$\exp_{\phi^{(N)}}=\sum_{j\geq 0} \frac{\prod_{l=0}^k (\prod_{n=0}^{j-1}(\theta^{q^l}- t^{q^n}))^{n_l}}{\prod_{n=0}^{j-1}(t^{q^n}-t^{q^j})}\varphi^j.$$
In particular $\exp_{\phi^{(N)}}$ induces a continuous $K$-linear endomorphism of  $K((\frac{1}{t}))$ which is an isometry on a sufficiently small neighborhood of zero (for the $\frac{1}{t}$-adic topology). Let's set:
$$H(\phi^{(N)})= \frac{K((\frac{1}{t}))}{(B+ \exp_{\phi^{(N)}}(K((\frac{1}{t})))}.$$
Then $H(\phi^{(N)})$ is a finite $K$-vector space and a $B$-module via $\phi.$ Let's denote by $[H(\phi^{(N)})]_B$ the monic generator (as a polynomial in $t$) of the Fitting ideal of the $B$-module $H(\phi^{(N)}),$ i.e.:
$$[H(\phi^{(N)})]_B=\det_{K[Z]}(Z{\rm Id}-\phi^{(N)}_t\mid_{H(\phi^{(N)})})\mid_{Z=t}.$$
As in \cite{APT}, Proposition 7.2,  one can prove that:
$$\dim_K H(\phi^{(N)})= \frac{\ell_q(N)-q}{q-1},$$
$$\{ x\in K((\frac{1}{t})),\exp_{\phi^{(N)}}(x)\in B\}= \frac{\bar \pi}{t^{\frac{\ell_q(N)-q}{q-1}}\prod_{l=0}^k\bar { \omega} (\theta^{q^l})^{n_l}}B,$$
where:
$$\bar \pi = \prod_{j\geq 1} (1-t^{1-q^j})^{-1} \in \mathbb F_p[[\frac{1}{t}]]^\times,$$
$$\bar\omega(\theta^{q^l})= \prod_{j\geq 0} (1-\frac{\theta^{q^l}}{t^{q^j}})^{-1}\in A[[\frac{1}{t}]]^\times.$$ 
Furthermore, if we set:
$$\mathcal L_N(t)=\prod_{P(t) {\rm \, monic\,  irreducible\,  polynomial\,  of\, }\mathbb F_q[t]}\frac{[\frac{B}{P(t)B}]_B}{[\phi^{(N)}(\frac{B}{P(t)B})]_B},$$
then, by the appendix of \cite{APT}, $\mathcal L_N(t)$ converges in $K((\frac{1}{t})),$ and:
$$[H(\phi^{(N)})]_B \frac{\bar \pi}{t^{\frac{\ell_q(N)-q}{q-1}}\prod_{l=0}^k\bar { \omega} (\theta^{q^l})^{n_l}} =\mathcal L_N(t).$$
Now, one can compute $\mathcal L_N(t)$ as in \cite{APT}, paragraph 5.3, and we get:
$$\mathcal L_N(t)= \sum_{d\geq 0} \sum_{a\in A_{+,d}}\frac{a^N}{a(t)}\in A[[\frac{1}{t}]]^\times.$$
Therefore:
$$[H(\phi^{(N)})]_B=B_N(\theta,t).$$
We warn the reader not to confuse $B_N(\theta,t)$ and $B_N(t,\theta)$, here and in the sequel of the paper, since we will be interested in those two polynomials. Recall that $r=\frac{\ell_q(N)-q}{q-1}.$ Let $\alpha_1(N), \ldots, \alpha_r(N)\in \mathbb C_\infty$ be the eigenvalues (counted with multiplicity) of the $K$-endomorphism of $H(\phi^{(N)})$: $\phi^{(N)}_t.$ We get:
$$B_N(\theta, \theta)= \prod_{j=1}^r (\theta-\alpha_j(N)).$$

Recall that $\mathcal L_N(t)= \sum_{d\geq 0} \sum_{a\in A_{+,d}}\frac{a^N}{a(t)}\in A[[\frac{1}{t}]].$ By Lemma \ref{LemmaATR1}, $\mathcal L_N(t)$ converges on $\{ x\in \mathbb C_\infty, v_\infty(x)<0\}.$ Let's write $N=\sum_{l=0}^k n_lq^l , n_l\in \{0, \ldots, q-1\}, n_k\not =0.$ Set $S=\{ \theta^{q^i}, i\leq k\}.$ Then, by Lemma \ref{LemmaATR3},  the elements of $S$ are zeros of $\mathcal L_N(t)$ The elements of $S$ are called the trivial zeros of $\mathcal L_N(t).$ A zero of $\mathcal L_N(t)$ which does  belong to $\{ x\in \mathbb C_\infty, v_\infty(x)<0\}\setminus S$ will be called an exceptional zero of $\mathcal L_N(t).$ It is clear that the exceptional zeros of $\mathcal L_N(t)$ are roots of $B_N(\theta, t)$ with the same multiplicity. Our aim in the remaining of the article is to study the following problem:
\begin{problem}\label{Conjecture} ${}$\par
Let $N\geq 2, $ $N\equiv 1\pmod{q-1}, \ell_q(N)\geq q.$  Then all the eigenvalues of  $\phi^{(N)}_t$ (viewed as a $K$-endomorphism of $H(\phi^{(N)})$) are simple and belong to $\mathbb F_p((\frac{1}{\theta}))$.
\end{problem} 
Theorem \ref{TheoremATR-BC} implies that $\theta$ is not an eigenvalue of $\phi^{(N)}_t$. We presently do not know whether another trivial zero of $\mathcal L_N(t)$ can be an eigenvalue of $\phi^{(N)}_t$.On the other side, the above problem implies that the exceptional zeros of $\mathcal L_N(t)$ are simple. Observe that, by Lemma \ref{LemmaATR5}, the above Problem has an affirmative answer for $q\leq \ell_q(N)\leq 2q-1.$ \par

\section {Answer to  Problem \ref{Conjecture}  for $q=p$}
In this section we  give an affirmative answer to  Problem \ref{Conjecture} in the case $q=p.$ By Proposition \ref{PropositionATR1} and Proposition \ref{PropositionATR6} below, this implies Theorem \ref{TheoremATR-BC}. For the convenience of the reader, we have tried to keep the text of this section as self-contained as possible.\par
In this  section $q=p.$\par
\subsection{Preliminaries}$ $\par
Lemma \ref{LemmaATR7} and Proposition \ref{PropositionATR2} below are  slight generalizations of the arguments used in the  proof of Theorem 1 in \cite{DIV}.\par

\begin{lemma}\label{LemmaATR7} Let $d,N\geq 1$ and ${\bf k}=(k_0, \ldots, k_{d-1})\in \mathbb \{ 0, \ldots, p-1\}^d.$ We assume that $\mid {\bf k}\mid  \leq \ell_p(N).$ For $1\leq i\leq d,$ we set : $\sigma_i=\sum_{n=0}^{i-1}  k_n.$ We also set $\sigma_0=0$ and $\sigma_{d+1}=\ell_p(N)$.  Let ${\bf m}=(m_0, \ldots, m_{d})\in \mathbb N^{d+1}$ be the element defined as follows:
$$n=0, \ldots, d, \ m_n=\sum_{l=\sigma_n+1}^{\sigma_{n+1}} p^{e_l}.$$
Then:
$${\bf m}\in U_d(N, {\bf k}).$$
Furthermore ${\bf m}$ is the greedy element of $U_d(N, {\bf k}).$ In particular $U_d(N, {\bf k})\not = \emptyset$ if and only if $\mid {\bf k}\mid  \leq \ell_p(N).$ 
\end{lemma}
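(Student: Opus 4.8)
The plan is to translate everything into a combinatorial picture of the base-$p$ expansion $N=\sum_{l=1}^{\ell_p(N)}p^{e_l}$ viewed as a multiset of $\ell_p(N)$ \emph{units}, the $l$-th unit having value $p^{e_l}$ and the units listed in nondecreasing order of value. The point is that an ${\bf m}=(m_0,\dots,m_d)\in\mathbb N^{d+1}$ with no carryover in $N=m_0+\cdots+m_d$ is the same datum as a partition of this multiset into $d+1$ (possibly empty) blocks, block $n$ having value-sum $m_n$: because every base-$p$ digit of $N$ is $\le p-1$, the count of units of value $p^j$ landing in block $n$ is itself $<p$, so it is exactly the $j$-th base-$p$ digit of $m_n$, and summing over $n$ recovers the $j$-th digit of $N$. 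In this picture, if $C_n$ is the number of units assigned to block $n$, then $m_n\equiv C_n\pmod{p-1}$ and $\sum_{n=0}^{d}C_n=\ell_p(N)$.

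First I would verify that the prescribed ${\bf m}$ lies in $U_d(N,{\bf k})$. By definition its block $n$ is made of the units with indices $\sigma_n+1,\dots,\sigma_{n+1}$, so $C_n=\sigma_{n+1}-\sigma_n$, which is $k_n$ for $0\le n\le d-1$ and $\ell_p(N)-|{\bf k}|$ for $n=d$. Since the blocks partition the units, $\sum m_n=N$ with no carryover; and for $n\le d-1$ the quantity $m_n$ is a sum of exactly $k_n$ powers of $p$, whence $m_n\ge k_n$ and $m_n\equiv k_n\pmod{p-1}$, i.e. $m_n-k_n\in(p-1)\mathbb N$. So all the defining conditions of $U_d(N,{\bf k})$ hold.

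For the greedy statement the key inequality is that \emph{any} ${\bf m}'=(m'_0,\dots,m'_d)\in U_d(N,{\bf k})$ satisfies $C'_n\ge k_n$ for $0\le n\le d-1$: indeed $C'_n\equiv m'_n\equiv k_n\pmod{p-1}$ from the two defining conditions, and if $0\le k_n\le p-2$ this already forces $C'_n\ge k_n$, while if $k_n=p-1$ the condition $m'_n\ge k_n>0$ forces block $n$ to be nonempty, so $C'_n\ge 1$ and hence $C'_n\ge p-1$. Summing, $\sum_{n=0}^{d-1}C'_n\ge|{\bf k}|$, so $C'_d\le\ell_p(N)-|{\bf k}|$. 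Since a sum of $C'_d$ units is at most the sum of the $\ell_p(N)-|{\bf k}|$ largest units (a one-line swap argument: the units being positive powers of $p$, take the biggest ones), we get $m'_d\le m_d$, with equality forcing $C'_d=\ell_p(N)-|{\bf k}|$, hence $C'_n=k_n$ for all $n\le d-1$ and block $d$ of ${\bf m}'$ using exactly the top $\ell_p(N)-|{\bf k}|$ units. I would then iterate downward: among those ${\bf m}'$ with $m'_d=m_d$, maximizing $m'_{d-1}$ means handing block $d-1$ the $k_{d-1}$ largest of the remaining bottom $|{\bf k}|$ units, i.e. the units of indices $\sigma_{d-1}+1,\dots,\sigma_d$, so $m'_{d-1}\le m_{d-1}$; continuing to $m'_1$ pins block $n$ to the units $\sigma_n+1,\dots,\sigma_{n+1}$ for every $n\ge1$, whereupon block $0$ is forced. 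Thus $(m'_d,\dots,m'_1)$ is lexicographically largest precisely when ${\bf m}'={\bf m}$, i.e. ${\bf m}$ is the greedy element. Finally, the construction gives $U_d(N,{\bf k})\ne\emptyset$ whenever $|{\bf k}|\le\ell_p(N)$, and conversely the inequality $\sum_{n=0}^{d-1}C'_n\ge|{\bf k}|$ combined with $\sum_{n=0}^{d}C'_n=\ell_p(N)$ shows $U_d(N,{\bf k})=\emptyset$ as soon as $|{\bf k}|>\ell_p(N)$.

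The only genuinely delicate point I anticipate is the lower bound $C'_n\ge k_n$, and in particular the edge case $k_n=p-1$, where the congruence alone would permit $C'_n=0$ and one must really invoke $m'_n-k_n\ge 0$; the rest is bookkeeping in the unit/partition picture together with the elementary fact that a bounded-size sub-multiset of powers of $p$ has maximal value-sum exactly when it consists of the largest available powers.
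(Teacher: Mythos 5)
Your proof is correct and follows essentially the same route as the paper: the paper defines the same $\mathbf m$, declares membership and greediness ``straightforward to verify,'' and derives nonemptiness from the count inequality $\ell_p(m_n')\geq k_n$, which is exactly your $C_n'\geq k_n$. Your write-up merely supplies the details the paper omits — in particular you correctly flag the edge case $k_n=p-1$, where the congruence $\ell_p(m_n')\equiv k_n\pmod{p-1}$ alone does not give $\ell_p(m_n')\geq k_n$ and one must also invoke $m_n'\geq k_n>0$, a point the paper's one-line justification glosses over.
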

\begin{proof} Observe that $\sigma_d = \mid {\bf k}\mid  \leq \ell_p(N).$ Thus ${\bf m}$ is well-defined. It is then straightforward to verify that ${\bf m}\in U_d(N, {\bf k})$ and that ${\bf m}$ is the greedy element of $U_d(N, {\bf k}).$\par
\noindent Now assume that  $U_d(N, {\bf k})\not = \emptyset .$ Let ${\bf m}'\in U_d(N, {\bf k}).$ Then:
$$ n= 0, \ldots, d-1, \ell_p(m_n')\equiv  k_n\pmod{p-1}.$$
This implies:
$$ n= 0, \ldots, d-1, \ell_p(m_n')\geq   k_n.$$
Thus:
$$\ell_p(N)\geq \sum_{n=0}^{d-1} \ell_p(m_n')\geq \mid {\bf k}\mid.$$
\end{proof}

\begin{proposition}\label{PropositionATR2}
Let $d,N\geq 1$ and ${\bf k}\in \mathbb N^d.$ We assume that $\mid \bar {\bf k}\mid  \leq \ell_p(N).$ Then $U_d(N, \bar {\bf k})$ contains a unique optimal element which is equal to the greedy element of $U_d(N, \bar {\bf k}).$  In particular $\sum_{a\in A_{+,d}} a(t)^N a^{{\bf k}}\not =0$ if and only if $\mid \bar {\bf k}\mid  \leq \ell_p(N).$
\end{proposition}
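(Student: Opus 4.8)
The plan is to prove Proposition \ref{PropositionATR2} by a careful analysis of the set $U_d(N,\bar{\bf k})$ and the degree function $\deg{\bf m}=m_1+2m_2+\cdots+dm_d$, in the spirit of Diaz-Vargas and Sheats. By Lemma \ref{LemmaATR7} applied to $\bar{\bf k}\in\{0,\dots,p-1\}^d$ (valid since $|\bar{\bf k}|\le\ell_p(N)$ by hypothesis), we already know $U_d(N,\bar{\bf k})\neq\emptyset$ and that it has a greedy element ${\bf m}^{\mathrm{gr}}$, obtained by distributing the base-$p$ digits $p^{e_1},\dots,p^{e_{\ell_p(N)}}$ of $N$ into the slots $m_0,\dots,m_d$ so that the high slots (large index) receive the smallest exponents first, subject to $\ell_p(m_n)$ filling up exactly $\bar k_n$ digits for $n<d$ and all the rest going into $m_d$ (with the constraint $\sigma_{d+1}=\ell_p(N)$, i.e. $m_d$ absorbs everything remaining). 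So the content is the \emph{uniqueness} of the optimal element and the identification optimal $=$ greedy.

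First I would set up the combinatorial picture: an element ${\bf m}\in U_d(N,\bar{\bf k})$ is the same datum as an ordered partition of the multiset of digits $\{p^{e_1},\dots,p^{e_{\ell_p(N)}}\}$ into $d+1$ blocks, where block $n$ (for $0\le n\le d-1$) must have size $\equiv \bar k_n\pmod{p-1}$ — and because $0\le\bar k_n\le p-1$ and the block sizes must be at least $\bar k_n$ while summing (over $n<d$) to at most $\ell_p(N)$, each block $n<d$ has size \emph{exactly} $\bar k_n$ (here I use that $|\bar{\bf k}|\le\ell_p(N)$ and the pigeonhole argument from the end of the proof of Lemma \ref{LemmaATR7}), except possibly when $\bar k_n=0$, in which case the block could also have size $p-1,2(p-1),\dots$; this last case is where care is needed. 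Then $\deg{\bf m}=\sum_{n=1}^d n\cdot(\text{sum of digits in block }n)$, so maximizing $\deg{\bf m}$ means pushing large digits $p^{e_l}$ into high-index blocks. The key exchange argument: if ${\bf m}$ is optimal and some block $n$ contains a digit $p^{e}$ while a strictly higher block $n'>n$ contains a strictly smaller digit $p^{e'}<p^e$, then — provided this swap stays inside $U_d$, which it does because swapping a single digit of the same power-of-$p$ type does not change $\ell_p$ of either block when... — actually one must swap so that $\ell_p(m_n)$, $\ell_p(m_{n'})$ are preserved; swapping one digit for another of possibly different size changes the number of digits only if carries occur, so I would argue that in an optimal element no carries occur within any block (a carry would let us split and re-distribute to increase the degree), hence each block's digit-count equals its size exactly, and then swapping two distinct-power digits between blocks $n<n'$ preserves membership in $U_d$ and strictly increases $\deg{\bf m}$ unless the larger digit is already in the higher block. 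This forces the optimal element to be exactly the greedy one, and in particular it is unique. The final sentence of the Proposition then follows: $\sum_{a\in A_{+,d}}a(t)^N a^{\bf k}=(-1)^d\sum_{{\bf m}\in U_d(N,\bar{\bf k})}C(N,{\bf m})t^{\deg{\bf m}}$ (displayed before the Proposition) has a unique term of top $t$-degree, namely the greedy/optimal ${\bf m}$, with coefficient $C(N,{\bf m})=\pm1\neq0$ (no carries, so Lucas gives a nonzero multinomial coefficient mod $p$), hence the sum is nonzero; conversely if $|\bar{\bf k}|>\ell_p(N)$ then $U_d(N,\bar{\bf k})=\emptyset$ by Lemma \ref{LemmaATR7} and the sum vanishes.

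The main obstacle I anticipate is handling the slots $n<d$ with $\bar k_n=0$: there the block is allowed to have size $0,p-1,2(p-1),\dots$, so "size $=\bar k_n$ exactly" can fail, and one must show that an optimal element never puts a nonempty block of size a positive multiple of $p-1$ in such a slot — intuitively because those $p-1$ digits would be better placed in a higher-index slot, but one has to check the arithmetic constraints ($\ell_p$ congruences mod $p-1$ and no carries) permit the relocation. This is exactly the subtlety that Sheats's argument addresses, and I would follow that structure: argue first that an optimal ${\bf m}$ is "carry-free" in every coordinate (so $\ell_p(m_n)$ is literally the number of base-$p$ digits used), then run the digit-exchange / relocation argument to conclude optimal $\Rightarrow$ greedy, which gives uniqueness for free.
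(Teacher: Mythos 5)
Your overall strategy (reduce to exact block sizes, then a digit-exchange argument showing optimal $=$ greedy) is the same as the paper's, which runs a two-case analysis in the spirit of Diaz-Vargas and Sheats. But there is a concrete error in your setup. You claim that for $n<d$ each block is forced by pigeonhole to have exactly $\bar k_n$ digits, "except possibly when $\bar k_n=0$". This is false: the membership condition only gives $\ell_p(m_n)\equiv \bar k_n\pmod{p-1}$ and $\ell_p(m_n)\geq \bar k_n$, so \emph{any} block, whether $\bar k_n$ is zero or not, may carry an excess of digits equal to a positive multiple of $p-1$, as long as the total $\sum_{n<d}\ell_p(m_n)$ does not exceed $\ell_p(N)$ — and the hypothesis $|\bar{\bf k}|\leq \ell_p(N)$ leaves plenty of room for this. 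For instance with $p=3$, $N=1+3+9+27$, $d=1$, $\bar k_0=1$, the element $(1+3+9,\,27)$ lies in $U_1(N,\bar{\bf k})$ with $\ell_p(m_0)=3>1=\bar k_0$. So the case you defer to the end ("the main obstacle") is not confined to the slots with $\bar k_n=0$; your case analysis as stated would simply miss these configurations.

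The repair is exactly the relocation argument you sketch, applied uniformly: whenever $\ell_p(m_j)>\bar k_j$ for some $j\leq d-1$, replace $m_j$ by the sum of its $\bar k_j$ smallest base-$p$ digits and add the removed part to $m_d$; this stays in $U_d(N,\bar{\bf k})$ and strictly increases $\deg{\bf m}$ by $(d-j)(m_j-m_j')>0$, so an optimal element has $\ell_p(m_n)=\bar k_n$ for all $n<d$. (This is the paper's Case 1.) After that, your exchange argument — swapping a larger digit in a lower block with a smaller digit in a higher block — is the paper's Case 2 and does force optimal $=$ greedy. One further remark: your worry about "carries within a block" is vacuous. The no-carryover condition in the definition of $U_d$ already means the base-$p$ digits of $N$ are genuinely partitioned among the $m_n$, so $\ell_p(m_n)$ automatically counts the digits allocated to block $n$; no separate "carry-free" lemma is needed. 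The "in particular" statement you derive at the end (unique top-degree term with coefficient $\pm C(N,{\bf m})\neq 0$ by Lucas, plus emptiness of $U_d(N,\bar{\bf k})$ when $|\bar{\bf k}|>\ell_p(N)$) is correct.
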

\begin{proof} Let ${\bf u}=(u_0, \ldots, u_{d})$ be the greedy element of $U_d(N, \bar{\bf k}).$ Let ${\bf m}\in U_d(N,\bar {\bf k})$ such that ${\bf m} \not = {\bf u}.$  We will show that ${\bf m}$ is not optimal.\par
Write $c_n=\ell_p(m_n),$ $n=0, \ldots d-1.$  Then :
$$n=0, \ldots, d-1, c_n \geq \bar k_n, c_n\equiv \bar k_n \pmod{p-1}.$$
For $n=0, \ldots, d-1,$ there exist $f_{n,1}\leq\cdots  \leq f_{n, c_n}$ such that we can write in a unique way:
$$m_n=\sum_{l=1}^{c_n} p^{f_{n,l}}.$$
{\sl Case 1) There exists an integer $j$, $0\leq j \leq d-1$, such that $c_{j}>\bar k_j.$}\par
\noindent Let ${\bf m}'\in \mathbb N^{d+1}$ be defined as follows:\par
\noindent - $m'_n=m_n $ for $0\leq n \leq d-1$, $n\not =j,$\par
\noindent -$m'_j= \sum_{l=1}^{\bar k_j} p^{f_{j,l}}.$\par
\noindent -$m'_d=N-m_0'-\cdots- m_{d-1}' = m_d+m_j-m_j' .$\par
\noindent Then ${\bf m}'\in U_d(N, {\bf k})$ and:
$$\deg  {\bf m}'= \deg {\bf m} + (d-j)(m_j-m_j')>\deg {\bf m} .$$
Thus ${\bf m}$ is not optimal.\par
\noindent {\sl Case 2) For $n=0, \ldots, d-1,$ $c_n=\bar k_n.$}\par
\noindent Let $j\in \{0, \ldots d-1\}$ be the smallest integer such that $m_j \not = u_j.$ Then, by the construction of ${\bf u},$ we have:
$$m_j > u_j. $$
Thus there exists an integer $l$ such that  the number of times $p^l$  appears in the sum of $m_j$ as $\bar k_j$   powers of $p$  is strictly greater than the number of times it appears in the sum of $u_j$ as $\bar k_j$ powers of $p.$ Also, there exists an integer $v$ such that  the number of times $p^v$  appears in the sum of $u_j$ as $\bar k_j$   powers of $p$  is strictly greater than the number of times it appears in the sum of $m_j$ as $\bar k_j$ powers of $p.$ Thus there exists an integer $t>j$ such that $p^v$ appears in the sum of $m_t$ as $\ell_p(m_t)$ powers of $p$. We observe that, by the construction of ${\bf u},$ we can choose $v$ and $l$ such that $ v< l.$  Now set:\par
\noindent - for $n=0, \ldots, d,$ $n\not = j, n\not = t,$ $m'_n=m_n,$\par
\noindent - $m'_j= m_j-p^l +p^v,$\par
\noindent - $m'_t= m_t-p^v+p^l.$\par
\noindent Let ${\bf m}'= (m'_0, \ldots, m'_{d-1})\in \mathbb N^d.$ Then ${\bf m}'\in U_d(N, \bar{\bf k})$ and:
$$\deg {\bf m}'=\sum_{l=0}^{d} lm'_l=\deg {\bf m} + (t-j)(p^l-p^v) >\deg {\bf m} .$$
Thus ${\bf m}$ is not optimal.
\end{proof}
We have the following key result:
\begin{proposition}\label{PropositionATR3}
Let $d,N\geq 1$ and ${\bf k}\in \mathbb N^d.$ We assume that $\ell_p(N)\geq p$ and that  $d(p-1)  \leq \ell_p(N)-p.$
Then:
$$N(d-1)< \deg_t \sum_{a\in A_{+,d}} a(t)^N a^{{\bf k}} \leq Nd.$$
\end{proposition}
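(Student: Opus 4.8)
The plan is to reduce the statement to a combinatorial inequality about the digit powers of $N$. Starting from the expansion already derived, $\sum_{a\in A_{+,d}}a(t)^Na^{\mathbf k}=(-1)^d\sum_{\mathbf m\in U_d(N,\bar{\mathbf k})}C(N,\mathbf m)\,t^{\deg\mathbf m}$, I first observe that $|\bar{\mathbf k}|\le d(p-1)\le \ell_p(N)-p<\ell_p(N)$, so Proposition \ref{PropositionATR2} applies: $U_d(N,\bar{\mathbf k})$ is non-empty and has a \emph{unique} optimal element $\mathbf m^{\ast}$, namely the greedy one. Because membership in $U_d(N,\bar{\mathbf k})$ forbids $p$-carryover in $N=m_0+\cdots+m_d$, each coefficient $C(N,\mathbf m)$ is non-zero in $\mathbb F_p$, so the leading monomial (the unique one of degree $\deg\mathbf m^{\ast}$) is not cancelled and $\deg_t\big(\sum_a a(t)^Na^{\mathbf k}\big)=\deg\mathbf m^{\ast}$. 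The upper bound is then immediate: $\deg\mathbf m^{\ast}=\sum_{n=1}^{d}n\,m^{\ast}_n\le d\sum_{n=0}^{d}m^{\ast}_n=dN$. Everything therefore hinges on the strict lower bound $\deg\mathbf m^{\ast}>(d-1)N$.

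For that I would list the digit powers of $N$ in non-decreasing order, $P_1\le\cdots\le P_L$ with $L=\ell_p(N)$, so that $N=\sum_{l}P_l$, and use the single structural fact that no power of $p$ appears more than $p-1$ times among the $P_l$; this translates into the monotonicity estimate $P_{l'}\ge p^{m}P_l$ whenever $l'-l\ge m(p-1)$. By Lemma \ref{LemmaATR7} (applied to $\bar{\mathbf k}$) the greedy element is $m^{\ast}_n=P_{\sigma_n+1}+\cdots+P_{\sigma_{n+1}}$ with $\sigma_n=\bar k_0+\cdots+\bar k_{n-1}$ and $\sigma_{d+1}=L$. Summation by parts gives $\deg\mathbf m^{\ast}=\sum_{n=1}^{d}(m^{\ast}_n+\cdots+m^{\ast}_d)=\sum_{n=1}^{d}\big(N-(P_1+\cdots+P_{\sigma_n})\big)=dN-\Sigma$, where $\Sigma:=\sum_{n=1}^{d}(P_1+\cdots+P_{\sigma_n})$, so the lower bound is exactly the inequality $\Sigma<N$.

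The key estimate I would run for this: since $\sigma_n\le n(p-1)$ one has $\Sigma\le\sum_{n=1}^{d}(P_1+\cdots+P_{n(p-1)})=\sum_{j=1}^{d}(d+1-j)\big(P_{(j-1)(p-1)+1}+\cdots+P_{j(p-1)}\big)$, all indices being legitimate because $(d+1)(p-1)=d(p-1)+(p-1)\le L-1$. For $l\le j(p-1)$ the gap to $(d+1)(p-1)$ is at least $(d+1-j)(p-1)$, so $P_l\le p^{\,j-d-1}P_{(d+1)(p-1)}$; substituting and summing the resulting series, bounded by $\sum_{m\ge 1}m\,p^{-m}=p/(p-1)^2$, yields $\Sigma<\tfrac{p}{p-1}\,P_{(d+1)(p-1)}$. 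On the other hand the hypothesis $d(p-1)\le L-p$ says precisely $L\ge(d+1)(p-1)+1$, so there are at least two indices $l$ with $l\ge(d+1)(p-1)$, hence $N\ge 2\,P_{(d+1)(p-1)}\ge\tfrac{p}{p-1}\,P_{(d+1)(p-1)}$ since $p\ge 2$. Combining the two inequalities gives $\Sigma<N$, i.e.\ $\deg\mathbf m^{\ast}=dN-\Sigma>(d-1)N$, which completes the proof.

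The step I expect to be the real obstacle is precisely the inequality $\Sigma<N$: crude estimates such as $\Sigma\le d\,(P_1+\cdots+P_{|\bar{\mathbf k}|})$, or $\deg\mathbf m^{\ast}\ge d\,m^{\ast}_d$, are off by a factor that grows with $d$, and one genuinely needs both the summation-by-parts identity for $\deg\mathbf m^{\ast}$ and the ``at most $p-1$ repetitions'' constraint on the digit powers (which forces the small digit powers to decay geometrically relative to $N$) to obtain a $d$-independent comparison — in fact one tight up to the harmless constant $p/(p-1)\le 2$. All the remaining ingredients (the passage to $\mathbf m^{\ast}$, non-cancellation of the leading term, the easy upper bound) are routine given the results already established above.
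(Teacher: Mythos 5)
Your proof is correct and follows essentially the same route as the paper's: reduce to the greedy element via Proposition \ref{PropositionATR2}, then bound the defect $dN-\deg\mathbf m^{\ast}$ by exploiting the geometric decay of the small digit powers (at most $p-1$ repetitions of each power of $p$), the series estimate $\sum_{m\ge1}mp^{-m}=p/(p-1)^2$, and the observation that $N$ dominates twice the reference power of $p$. The only differences (Abel summation for $\deg\mathbf m^{\ast}$, and normalizing against $P_{(d+1)(p-1)}$ rather than $p^{e_{\ell_p(N)-1}}$) are cosmetic.
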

\begin{proof}   It is clear that $\deg_t \sum_{a\in A_{+,d}} a(t)^N a^{{\bf k}} \leq Nd.$ Observe that $\mid \bar {\bf k}\mid \leq d(p-1).$ Let ${\bf m}$ be the greedy element of $U_d(N,\bar{\bf k}).$  By Proposition \ref{PropositionATR2},  we have:
$$\deg_t \sum_{a\in A_{+,d}} a(t)^N a^{{\bf k}}=\sum_{n=0}^{d} nm_n = dN-\sum_{n=1}^d nm_{d-n}.$$
By Lemma \ref{LemmaATR7}, we have:
$$n=0, \ldots, d-1,m_{n}\leq \bar k_n p^{e_{\bar k_0+\cdots + \bar k_n}},$$
where we recall that:
$$N=\sum_{n=1}^{\ell_p(N)} p^{e_n}.$$
Let $l= \ell_p(N)-1.$ Observe that:
$$e_{\ell_p(N) -p-(p-1)t}\leq e_{l}-1-t.$$
Since $d(p-1)\leq \ell_p(N)-p,$ we get:
$$n=1, \ldots, d,\bar k_0+\cdots + \bar k_{d-n}\leq (p-1) (d-n+1)\leq \ell_p(N)-p -(p-1)(n-1).$$
Thus:
$$n=1, \ldots, d,m_{d-n}\leq (p-1) p^{e_{l}-n},$$
Therefore:
$$\sum_{n=1}^d nm_{d-n} \leq (p-1)p^{e_{l}} \sum_{n=1}^d np^{-n}.$$
Recall that if $x\in \mathbb R\setminus \{1\},$ we have:
$$\sum_{n=1}^d nx^{n-1}= \frac{1-x^{d+1}+(d+1)(x-1)x^d }{(x-1)^2}.$$
Thus:
$$(p-1)\sum_{n=1}^d np^{-n}= \frac{p-p^{-d}-(d+1)(p-1)p^{-d}}{p-1}<\frac{p}{p-1}. $$
Now:
$$\sum_{n=1}^d nm_{d-n} <\frac{p}{p-1}p^{e_l}.$$
Thus:
$$\sum_{n=1}^d nm_{d-n} < 2p^{e_l}.$$
But $2p^{e_l}\leq N$ since $l=\ell_p(N)-1.$ Thus:
$$\deg_t \sum_{a\in A_{+,d}} a(t)^N a^{{\bf k}}= dN-\sum_{n=1}^d nm_{d-n}> dN -N.$$
\end{proof}

\subsection{Newton polygons of truncated $L$-series}${}$\par
 For $i,j\geq 0,$ we set:
$$S_j(i)= \sum_{a\in A_{+,j}}a(t)^i\in \mathbb F_p[t].$$
Note that by Proposition \ref{PropositionATR2}, we have $S_{i}(N)\not =0$ for $i=0, \ldots, r.$
\begin{proposition}\label{PropositionATR4} We have:
$$i=0, \ldots, r, \deg_t \alpha_{i,N}(t) = \deg_t S_{i}(N).$$
\end{proposition}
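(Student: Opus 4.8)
The plan is to compute $\deg_t \alpha_{i,N}(t)$ from the formula in Lemma \ref{LemmaATR2}, namely
$$\alpha_{i,N}(t)=\sum_{\ell({\bf k})+w({\bf k})=i} (-1)^{\ell ({\bf k})}C_{{\bf k}}\sum_{a\in A_{+,\ell({\bf k})}} a(t)^N a^{{\bf k}},$$
and to show that, for $i\le r$, the unique term of maximal $t$-degree is the one with ${\bf k}=(0,\dots,0)\in\mathbb N^{i}$, which contributes exactly $S_i(N)$. First I would record that $\ell({\bf k})+w({\bf k})=i$ forces $d:=\ell({\bf k})\le i$ (since $w({\bf k})\ge 0$), and that the degree of each inner sum is governed by Proposition \ref{PropositionATR3}: if $d(p-1)\le \ell_p(N)-p$, i.e. $d\le r$, then $N(d-1)<\deg_t\sum_{a\in A_{+,d}}a(t)^N a^{\bf k}\le Nd$. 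In particular for $d=i\le r$ the term with $w({\bf k})=0$ (forcing ${\bf k}=0$) has degree in the window $(N(i-1),Ni]$, and since $S_i(N)\ne0$ by Proposition \ref{PropositionATR2} (as $|\bar{\bf k}|=0\le \ell_p(N)$), its degree equals $\deg_t S_i(N)>N(i-1)$.

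Next I would bound the degrees of all the other terms, i.e. those with $d=\ell({\bf k})<i$. For such a ${\bf k}$ we still have $d\le r$, so Proposition \ref{PropositionATR3} gives $\deg_t\sum_{a\in A_{+,d}}a(t)^N a^{\bf k}\le Nd\le N(i-1)<\deg_t S_i(N)$. Hence every term with $\ell({\bf k})<i$ has strictly smaller $t$-degree than the ${\bf k}=0$ term, so no cancellation can affect the top-degree coefficient of $\alpha_{i,N}(t)$, and $\deg_t\alpha_{i,N}(t)=\deg_t S_i(N)$. I should be slightly careful about the case $d=0$: there $\ell({\bf k})+w({\bf k})=i$ with $\ell({\bf k})=0$ is impossible for $i\ge1$ since then $w({\bf k})$ is an empty sum equal to $0$; for $i=0$ the statement $\alpha_{0,N}=1=S_0(N)$ is immediate. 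I would also note that $C_{\bf k}\in\mathbb F_p$ could in principle vanish, but this only removes terms and cannot raise the degree, so the argument is unaffected — only the surviving ${\bf k}=0$ term matters and its coefficient $C_{\bf 0}=1$ is nonzero.

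The one genuine subtlety — and the step I expect to be the main obstacle — is making sure the hypotheses of Proposition \ref{PropositionATR3} are actually met for all ${\bf k}$ appearing, which requires $\ell_p(N)\ge p$ and $d(p-1)\le\ell_p(N)-p$, i.e. $d\le r=\frac{\ell_p(N)-p}{p-1}$. For $i\le r$ and $\ell({\bf k})\le i$ this holds, but I should double-check the edge case $\ell_p(N)<p$ (then $r=0$) and the boundary $i=r$, and confirm that "$d\le r$" together with $r(p-1)\le \ell_p(N)-p$ indeed gives $d(p-1)\le\ell_p(N)-p$. Assuming $\ell_p(N)\ge p$ throughout this subsection (consistent with the running hypothesis $\ell_q(N)\ge q$ in Problem \ref{Conjecture}), the chain of inequalities closes cleanly: the key numerical input is precisely that the gap $Nd-N(d-1)=N$ between consecutive degree windows exceeds nothing problematic, because Proposition \ref{PropositionATR3} already separates the window for degree-$i$ sums from the window for degree-$(i-1)$ sums. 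So the proof reduces to assembling Lemma \ref{LemmaATR2}, Proposition \ref{PropositionATR2} (nonvanishing of $S_i(N)$), and Proposition \ref{PropositionATR3} (the degree window), with the degree separation doing all the work.
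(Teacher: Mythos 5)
Your proposal is correct and follows essentially the same route as the paper: the paper's proof likewise combines Lemma \ref{LemmaATR2} with Proposition \ref{PropositionATR3} to see that, for $i\le r$, the maximum of $\deg_t\sum_{a\in A_{+,\ell(\mathbf k)}}a(t)^Na^{\mathbf k}$ over $\ell(\mathbf k)+w(\mathbf k)=i$ is attained uniquely at $\mathbf k=(0,\dots,0)\in\mathbb N^i$. You have merely spelled out the degree-window separation $Nd\le N(i-1)<\deg_tS_i(N)$ and the edge cases ($i=0$, vanishing $C_{\mathbf k}$, the hypothesis $d(p-1)\le\ell_p(N)-p$), all of which the paper leaves implicit.
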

\begin{proof} We recall that:
$$r={\rm Max} \{ [\frac{\ell_p(N)-p}{p-1}], 0\} \in \mathbb N.$$ We can assume that $r\geq 1.$ By Proposition \ref{PropositionATR3}, for $i=0, \ldots,r, $   
$${\rm Max}\{ \deg_t \sum_{a\in A_{+,\ell({\bf k})}} a(t)^N a^{{\bf k}}, w({\bf k})+ \ell ({\bf k})=i\}$$
is attained for a unique ${\bf k}$ which is $(0, \ldots, 0)\in \mathbb N^{i}.$ It remains to apply Lemma \ref{LemmaATR2}. 
\end{proof}
We set:
$$\Lambda_r(N)= \sum_{i=0}^r\alpha_i(N) \theta^{-i} \in K_\infty[t].$$ 
Let's write:
$$N=\sum_{l=0} ^k n_l p^l, n_0, \ldots, n_k\in \{0, \ldots p-1\}, n_k\not =0.$$
\begin{proposition}\label{PropositionATR5}
We have $\deg_t\Lambda_r(N)= \deg_tS_r(N).$  Furthermore the edge points of the Newton polygon of $\Lambda_r(N)$ are:
$$(\deg_tS_j(N),j), j=0, \ldots, r.$$
\end{proposition}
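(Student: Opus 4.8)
The plan is to determine the Newton polygon of $\Lambda_r(N) = \sum_{i=0}^r \alpha_{i,N}(t)\theta^{-i}$ by first pinning down the $t$-degree of each coefficient $\alpha_{i,N}(t)$ and then checking that the candidate vertices $(\deg_t S_j(N), j)$ for $j = 0, \ldots, r$ are genuinely \emph{below} the segments joining any pair of them — i.e. that the points are in convex position as a lower hull. First I would recall from Proposition \ref{PropositionATR4} that $\deg_t \alpha_{i,N}(t) = \deg_t S_i(N)$ for $i = 0, \ldots, r$, which immediately gives $\deg_t \Lambda_r(N) = \deg_t S_r(N)$ (the top coefficient $\alpha_{r,N}$ has the largest $t$-degree among the relevant range, and no cancellation occurs since these are the leading terms). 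The Newton polygon here should be read with the $x$-axis recording $t$-degree (abscissa of a monomial $t^j/\theta^i$) and the $y$-axis recording the $\theta^{-1}$-order $i$; so the edge points claimed are $(\deg_t S_j(N), j)$.

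The key step is a convexity/monotonicity estimate on the sequence $\deg_t S_j(N)$ for $j = 0, \ldots, r$. From Proposition \ref{PropositionATR3} (applied with ${\bf k} = (0,\ldots,0)$, using $j(p-1) \le r(p-1) \le \ell_p(N) - p$) we have $N(j-1) < \deg_t S_j(N) \le Nj$ for each such $j$. This means each successive difference $\deg_t S_j(N) - \deg_t S_{j-1}(N)$ lies strictly between $N(j-1) - Nj + 1 = -N+1$ wait — more carefully, $\deg_t S_j(N) - \deg_t S_{j-1}(N) > N(j-1) - N(j-1) = 0$ is too weak; instead one compares $\deg_t S_j(N) > N(j-1)$ against $\deg_t S_{j-1}(N) \le N(j-1)$, giving strict increase, and also bounds the increments so that the slopes $\bigl(\deg_t S_j(N) - \deg_t S_{j-1}(N)\bigr)^{-1}$ (or rather $-1/(\text{increment})$, the Newton-polygon slope) are \emph{decreasing} in $j$. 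Concretely, I would use the explicit description of the greedy/optimal element ${\bf m}$ of $U_j(N, {\bf 0})$ from Lemma \ref{LemmaATR7} to write $\deg_t S_j(N) = \sum_n n m_n$ exactly, and show the increments $\deg_t S_j(N) - \deg_t S_{j-1}(N)$ are strictly increasing in $j$ — this is precisely what forces the points $(\deg_t S_j(N), j)$ onto the lower convex hull with each one an actual vertex. The arithmetic is a direct manipulation of the partial sums $\sum_{l=1}^{j(p-1)} p^{e_l}$ underlying the greedy decomposition.

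The main obstacle I expect is controlling the increments $\deg_t S_j(N) - \deg_t S_{j-1}(N)$ tightly enough to get \emph{strict} convexity (so that every candidate point is a true edge point, not merely on an edge). The estimate in Proposition \ref{PropositionATR3} only sandwiches $\deg_t S_j(N)$ between $N(j-1)$ and $Nj$, which is enough for strict monotonicity but a priori not for convexity; one needs the finer information that, in passing from $j-1$ to $j$, the greedy element absorbs the $(p-1)$ \emph{next-smallest} available powers $p^{e_l}$ into the new low-index slot $m_0$, and each such power, being smaller than those used before, contributes a \emph{larger} shift toward the top coefficient $m_j$. Making this monotonicity of increments precise via Lemma \ref{LemmaATR7}'s formula $m_n = \sum_{l = \sigma_n+1}^{\sigma_{n+1}} p^{e_l}$ — and verifying that the resulting slopes $-1/(\deg_t S_j(N) - \deg_t S_{j-1}(N))$ are strictly increasing — is the technical heart of the argument. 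Once that is done, together with $\alpha_{0,N}(t) = 1$ (so the polygon starts at $(0,0)$) and Proposition \ref{PropositionATR4} identifying the $t$-degrees of all intermediate coefficients, the claim that the edge points are exactly $(\deg_t S_j(N), j)$ for $j = 0, \ldots, r$ follows.
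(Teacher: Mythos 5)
Your overall strategy is the paper's: combine Proposition \ref{PropositionATR4} (which gives $\deg_t\alpha_{i,N}(t)=\deg_tS_i(N)$ for $i=0,\ldots,r$, hence $\deg_t\Lambda_r(N)=\deg_tS_r(N)$ and the fact that the coefficient of $t^{\deg_tS_j(N)}$ has valuation exactly $j$) with a monotonicity statement for the increments of $j\mapsto\deg_tS_j(N)$, extracted from the greedy element of Lemma \ref{LemmaATR7}. (Minor point: $S_j(N)$ corresponds to $U_j(N,\bar{\bf 0})=U_j(N,(p-1,\ldots,p-1))$, not $U_j(N,{\bf 0})$, though your later formulas show you have the right object in mind.)

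However, you assert the key monotonicity in the wrong direction, and this is not a typo you could absorb elsewhere: it is the technical heart of the proof, as you yourself say. The increments $\deg_tS_j(N)-\deg_tS_{j-1}(N)$ are strictly \emph{decreasing} in $j$, not increasing. Indeed, letting ${\bf m}=(m_0,\ldots,m_r)$ be the greedy element of $U_r(N,(p-1,\ldots,p-1))$ and ${\bf m}(j)=(m_0,\ldots,m_{j-1},N-\sum_{n<j}m_n)$ the greedy element of $U_j(N,(p-1,\ldots,p-1))$, a direct computation gives $\deg{\bf m}(j+1)-\deg{\bf m}(j)=N-\sum_{n=0}^{j}m_n=m_{j+1}+\cdots+m_r$, which visibly shrinks as $j$ grows (each $m_n\geq 1$ since $\ell_p(m_n)=p-1$); the increment is the mass left in the top slot, and passing from $j$ to $j+1$ peels off the next $p-1$ smallest remaining powers of $p$, it does not refill $m_0$. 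Decreasing increments are exactly what you need for your stated goal: the slope of the segment from $(\deg_tS_{j-1}(N),j-1)$ to $(\deg_tS_j(N),j)$ is $1/(\deg_tS_j(N)-\deg_tS_{j-1}(N))$, and these slopes must strictly \emph{increase} for the points to be vertices of the lower convex hull. Had the increments been strictly increasing as you claim, the intermediate points would lie strictly \emph{above} the chord joining $(\deg_tS_0(N),0)$ to $(\deg_tS_r(N),r)$ and would not be edge points at all (compare the example $N=682$, $q=4$ in the paper, where the consecutive $t$-degrees are $0$, $640$, $680$, with increments $640>40$). So as written the argument would prove the opposite of what is needed; replacing ``strictly increasing'' by ``strictly decreasing'' and justifying it by the identity $\deg{\bf m}(j+1)-\deg{\bf m}(j)=N-\sum_{n=0}^{j}m_n$ repairs the proof and recovers the paper's argument.
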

\begin{proof} We can assume that $r\geq 1.$ By Proposition \ref{PropositionATR2}, we have $U_r(N, (p-1, \ldots, p-1))\not =\emptyset.$ Let ${\bf m}\in \mathbb  N^{r+1}$ be the optimal element of $U_r(N, (p-1, \ldots, p-1))$ given by Proposition \ref{PropositionATR2} and Lemma \ref{LemmaATR7}. For $j=0, \ldots, r,$ let ${\bf m}(j)=(m_0, \ldots, m_{j-1}, N-\sum_{n=0}^{j-1}m_n)\in \mathbb N^{j+1}.$  Then, again by Proposition \ref{PropositionATR2} and Lemma \ref{LemmaATR7},  ${\bf m}(j)$ is the optimal element of $U_{j}(N, (p-1, \ldots, p-1)).$ Therefore, $\deg_tS_j(N)=\deg {\bf m}(j), j=0, \ldots,r.$ For $j=0, \ldots, r,$ we have:
$$p^{k+1}>N-\sum_{n=0}^{j-1}m_n >n_kp^k.$$
Now let $j\in \{ 0, \ldots, r-1\},$ we have:
$$p^{k+1}>\deg {\bf m}(j+1)-\deg{\bf m}(j)= N-\sum_{n=0}^{j}m_n >n_k p^k.$$
Thus, by Proposition \ref{PropositionATR4}, we get $\deg_t\Lambda_r(N)= \deg_tS_r(N).$ Furthermore, we observe that for $j\in \{ 1, \ldots, r-1\},$ we have:
$$\deg {\bf m}(j)-\deg{\bf m}(j-1)>\deg {\bf m}(j+1)-\deg{\bf m}(j).$$
Thus, one easily sees that  the edge points of the Newton polygon of $\Lambda_r(N)$ are $(\deg {\bf m}(j), j), j=0, \ldots, r.$ \end{proof}
\subsection{A positive answer to Problem \ref{Conjecture}}${}$\par
Let $N\geq 2$ be an integer, $N\equiv 1\pmod{p-1}.$ Recall that:
$$N=\sum_{l=0} ^k n_l p^l, n_0, \ldots, n_k\in \{0, \ldots p-1\}, n_k\not =0.$$
Recall that $\ell_p(N)= n_0+\cdots +n_k$ and $r={Max }\{ \frac{\ell_p(N)-p}{p-1}, 0\}.$ Let $b_N\in \mathbb N$ be the total degree in $t, \theta$ of the polynomial $B_N(t).$
\begin{proposition}\label{PropositionATR6}
The polynomial $B_N(t,\theta)$ has only one monomial of total degree $b_N$, which is of the form $ t^{b_N}.$ Furthermore:
$$b_N=\deg_tS_r(N) .$$
\end{proposition}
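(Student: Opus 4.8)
The plan is to recover $B_N(t,\theta)$ from the $1/\theta$-expansion $L_N(t)=\sum_{i\ge 0}\alpha_{i,N}(t)\theta^{-i}$ and then to read off its top total-degree part using what is already in hand: $\deg_t\alpha_{i,N}(t)=\deg_t S_i(N)$ for $0\le i\le r$ (Proposition~\ref{PropositionATR4}), and $\deg_t S_{j+1}(N)-\deg_t S_j(N)>n_kq^k\ge q^k$ for $0\le j\le r-1$ (established inside the proof of Proposition~\ref{PropositionATR5}). I may assume $r\ge 1$ (for $r=0$ both sides are $0$), so that $k\ge 1$ and $q^k\ge 2$. Starting from $B_N(t,\theta)=\frac{(-1)^{(\ell_q(N)-1)/(q-1)}}{\widetilde\pi}\,L_N(t)\prod_{l=0}^{k}\omega(t^{q^l})^{n_l}$ (the identity recalled at the start of the proof of Lemma~\ref{LemmaATR5}), and substituting $\omega(t^{q^l})=\lambda_\theta\prod_{j\ge 0}(1-t^{q^l}\theta^{-q^j})^{-1}$, $\widetilde\pi=\lambda_\theta\theta\prod_{j\ge 1}(1-\theta^{1-q^j})^{-1}$, $\lambda_\theta^{q-1}=-\theta$ and $\ell_q(N)\equiv 1\pmod{q-1}$, a direct computation produces an identity in $\mathbb F_q[t]((1/\theta))$
$$\theta^{r}L_N(t)=B_N(t,\theta)\,\Psi(t,\theta),\qquad \Psi(t,\theta)=\Bigl(\prod_{l=0}^{k}\prod_{j\ge 0}(1-t^{q^l}\theta^{-q^j})^{n_l}\Bigr)\prod_{j\ge 1}(1-\theta^{1-q^j})^{-1},$$
where $\Psi\in\mathbb F_q[t][[1/\theta]]^{\times}$ has constant term $1$.

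The technical heart is the bound $\deg_t\psi_b\le q^k b$, where $\Psi=\sum_{b\ge 0}\psi_b(t)\theta^{-b}$: any monomial contributing to $\psi_b$ involves at most $b$ factors of the shape $t^{q^l}\theta^{-q^j}$ (each drops the $\theta$-degree by $q^j\ge 1$), each contributing $t$-degree $\le q^k$, while the $t$-free tail $\prod_{j\ge 1}(1-\theta^{1-q^j})^{-1}$ adds nothing to the $t$-degree. Writing $B_N(t,\theta)=\sum_{i=0}^{r}c_i(t)\theta^{r-i}$ with $c_i\in\mathbb F_p[t]$, $c_0=1$, and comparing coefficients of $\theta^{r-n}$ for $0\le n\le r$ gives $c_n(t)=\alpha_{n,N}(t)-\sum_{j=0}^{n-1}c_j(t)\psi_{n-j}(t)$. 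I would then prove by induction on $n$ that $\deg_t c_n=\deg_t S_n(N)$ and that the leading $t$-coefficient of $c_n$ is the nonzero constant $C(N,\mathbf m(n))$: the case $n=0$ is trivial, and for the step each corrector has $\deg_t(c_j\psi_{n-j})\le \deg_t S_j(N)+(n-j)q^k<\deg_t S_n(N)=\deg_t\alpha_{n,N}(t)$, the strict inequality holding because the $n-j$ jumps separating $\deg_t S_j(N)$ and $\deg_t S_n(N)$ each exceed $q^k$; hence $\alpha_{n,N}$ dominates, and its degree and leading monomial — which by Propositions~\ref{PropositionATR2} and~\ref{PropositionATR4} are $\deg_t S_n(N)$ and $C(N,\mathbf m(n))\,t^{\deg_t S_n(N)}$ — pass to $c_n$.

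To conclude: the monomial $c_i(t)\theta^{r-i}$ of $B_N(t,\theta)$ has total degree $\deg_t S_i(N)+(r-i)$, and for $i<r$ this is $<\deg_t S_r(N)$ since $\deg_t S_r(N)-\deg_t S_i(N)>(r-i)q^k\ge r-i$. Thus every monomial of $B_N(t,\theta)$ of total degree $\ge\deg_t S_r(N)$ comes from $c_r(t)$, and $c_r(t)$ has a unique monomial of top $t$-degree, namely $C(N,\mathbf m(r))\,t^{\deg_t S_r(N)}$; therefore $b_N=\deg_t S_r(N)$ and $B_N(t,\theta)$ has exactly one monomial of total degree $b_N$, of the form (a nonzero constant times) $t^{b_N}$, which is what the proposition asserts. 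I expect the only genuine work beyond Propositions~\ref{PropositionATR2}--\ref{PropositionATR5} to be the degree estimate $\deg_t\psi_b\le q^kb$ for the completion factor $\Psi$, together with the observation that it is dominated by the $>q^k$ increments of $\deg_t S_j(N)$; the remainder is bookkeeping with the identity $\theta^rL_N=B_N\Psi$.
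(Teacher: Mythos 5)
Your proposal is correct and follows essentially the same route as the paper: both start from the identity $B_N(t,\theta)=(-1)^{(s-1)/(q-1)}\widetilde{\pi}^{-1}L_N(t)\prod_l\omega(t^{q^l})^{n_l}$, reduce it to a relation between $B_N$, the truncation $\Lambda_r(N)$ of $L_N$, and the completion factor $\prod_{l,j}(1-t^{q^l}\theta^{-q^j})^{\pm n_l}$, and then isolate the unique top total-degree monomial via the bound $\deg_t\psi_b\le q^kb$ together with the gap estimates $\deg_t\alpha_{i,N}=\deg_tS_i(N)$ and $\deg_tS_{j+1}(N)-\deg_tS_j(N)>q^k$ from Propositions \ref{PropositionATR2}--\ref{PropositionATR5}. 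Your reorganization of the bookkeeping as the triangular system $c_n=\alpha_{n,N}-\sum_{j<n}c_j\psi_{n-j}$ is only a cosmetic variant of the paper's computation of the polynomial part of $\delta_N\prod(1-t^{p^l}\theta^{-p^j})^{-n_l}$.
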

\begin{proof}  We can assume that $r\geq 1.$  First let's observe that:
$$\prod_{j\geq 1} ( 1-\theta^{1-p^j})^{-1}B_N(t,\theta) \equiv (-1)^{\frac{\ell_p(N)-1}{p-1}}\delta_N \prod_{l=0}^k \prod_{j\geq 0} (1-\frac{t^{p^l}}{\theta^{p^{j}}})^{-n_l}\pmod{\frac{1}{\theta} \mathbb F_p[t][[\frac{1}{\theta}]]},$$
where:
$$\delta_N=\sum_{i=0}^r \alpha_{r-i}(N) \theta^i.$$
Let $\varepsilon _N \in \mathbb F_p[t, \theta]$ be uniquely determined by the congruence:
$$\varepsilon_N\equiv \delta_N \prod_{l=0}^k \prod_{j\geq 0} (1-\frac{t^{p^l}}{\theta^{p^{j}}})^{-n_l}\pmod{\frac{1}{\theta} \mathbb F_p[t][[\frac{1}{\theta}]]}.$$ 
Let $i\in \{ 0, \ldots, r\}.$ A monomial in the product 
$$\alpha_i(N) \theta^i \prod_{l=0}^k\prod_{j\geq 0} (1-\frac{t^{p^l}}{\theta^{p^{j}}})^{-n_l}\pmod{\frac{1}{\theta} \mathbb F_p[t][[\frac{1}{\theta}]]}$$
is of the form:
$$ \theta^i t^j \frac{t^\alpha}{\theta^\beta}, \, j\leq \deg_t\alpha_i(t), \beta\leq i, \alpha \leq p^k\beta.$$
Thus the total degree of a monomial in
$$\alpha_i(N) \theta^i \prod_{l=0}^k\prod_{j\geq 0} (1-\frac{t^{p^l}}{\theta^{p^{j}}})^{-n_l}\pmod{\frac{1}{\theta} \mathbb F_p[t][[\frac{1}{\theta}]]}$$
is less than or equal to $ p^k i+\deg_t\alpha_i(t).$
To conclude the proof of the Proposition, we use the same arguments as that used in the proof of Proposition \ref{PropositionATR5}. By Proposition \ref{PropositionATR2}, we have $U_r(N, (p-1, \ldots, p-1))\not =\emptyset.$ Let ${\bf m}\in \mathbb  N^{r+1}$ be the optimal element of $U_r(N, (p-1, \ldots, p-1))$ given by Proposition \ref{PropositionATR2} and Lemma \ref{LemmaATR7}. For $j=0, \ldots, r,$ let ${\bf m}(j)=(m_0, \ldots, m_{r-j-1},N-\sum_{n=0}^{r-j-1}m_n)\in \mathbb N^{r-j+1}.$ Then, again by Proposition \ref{PropositionATR2} and Lemma \ref{LemmaATR7},  ${\bf m}(j)$ is the optimal element of $U_{r-j}(N, (p-1, \ldots, p-1)).$ For $j=0, \ldots, r,$ we have:
$$N-\sum_{n=0}^{r-j-1}m_n >p^k.$$
Now let $j\in \{ 0, \ldots, r-1\},$ we have:
$$\deg {\bf m}(j)-\deg{\bf m}(j+1)= N-\sum_{n=0}^{r-j-1}m_n > p^k.$$
Thus, by Proposition \ref{PropositionATR3} and Proposition \ref{PropositionATR4},   ${\rm Max}\{p^k i+\deg_t\alpha_i(t), i=0, \ldots, r\}$ is attained  exactly at $i=0.$ Again by Proposition \ref{PropositionATR4}, this implies that the total degree in $t, \theta$ of $\varepsilon_N$ is equal to $\deg_t S_r(N)$ and that $\varepsilon_N(t)$ has only one monomial of total degree $\deg_t S_r(N)$ which is of the form $ t^{\deg_t S_r(N)}.$  The Proposition follows.
\end{proof}
\begin{theorem}\label{TheoremATR1} Let $N\geq 2, N\equiv 1\pmod{p-1}.$ The polynomial $B_N(\theta, t)$ (viewed as a polynomial in $t$) has $r$ simple roots and all its roots are contained in $\mathbb F_p((\frac{1}{\theta}))\setminus\{\theta^{p^i} ,i \in \mathbb Z\}.$
\end{theorem}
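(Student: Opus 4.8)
The plan is to compute the $v_\infty$-Newton polygon of $B_N(\theta,t)$, regarded as a monic polynomial in $t$ of degree $r$ with coefficients in $\mathbb F_p[\theta]\subset\mathbb F_p((\tfrac1\theta))$, and to show that it has exactly $r$ sides, each of horizontal length one, with pairwise distinct integer slopes lying in the open interval $(p^k,p^{k+1})$, where $N=\sum_{l=0}^k n_lp^l$ with $n_k\neq 0$ (we may assume $r\ge 1$, since for $r=0$ one has $B_N(\theta,t)=1$ and there is nothing to prove). Once this is established, each side contributes exactly one root; that root is simple, and since $\mathbb F_p((\tfrac1\theta))$ is complete with value group $\mathbb Z$, a side of horizontal length one produces a root in $\mathbb F_p((\tfrac1\theta))$ whose valuation equals the negative of the slope, hence lies in $(-p^{k+1},-p^k)$. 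As $v_\infty(\theta^{p^m})=-p^m\notin(-p^{k+1},-p^k)$ for every $m\in\mathbb Z$, no root of $B_N(\theta,t)$ can be a $\theta^{p^m}$, and all three assertions follow simultaneously.

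To obtain this Newton polygon I would first pin down the coefficients. Write $B_N(\theta,t)=\sum_{n=0}^r B_{n,N}(\theta)\,t^{r-n}$ with $B_{n,N}\in\mathbb F_p[\theta]$ and $B_{0,N}=1$. Combining the identity $\mathcal L_N(t)=\bar\pi\,B_N(\theta,t)\,\bigl(t^r\prod_{l=0}^k\bar\omega(\theta^{p^l})^{n_l}\bigr)^{-1}$ recorded above with the expansion $\mathcal L_N(t)=\sum_{i\ge 0}\alpha_{i,N}(\theta)t^{-i}$ gives, for $0\le n\le r$, the relation $B_{n,N}(\theta)=\sum_{i=0}^{n}\alpha_{i,N}(\theta)\,Q_{n-i}(\theta)$, where $\sum_{m\ge 0}Q_m(\theta)t^{-m}:=\bar\pi^{-1}\prod_{l=0}^k\bar\omega(\theta^{p^l})^{n_l}$ has $Q_0=1$ and $\deg_\theta Q_m\le p^km$ (the highest-degree contribution comes from raising $\theta^{p^k}/t$ to the power $m$ inside $(1-\theta^{p^k}/t)^{-n_k}$, and $\bar\pi^{-1}$ contributes no $\theta$). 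Next I would invoke Proposition \ref{PropositionATR4}, which gives $\deg_\theta\alpha_{i,N}=\deg_t S_i(N)=:x_i$ for $0\le i\le r$ (so $x_0=0$), together with the two facts isolated inside the proof of Proposition \ref{PropositionATR5}: the gaps $x_i-x_{i-1}$ are strictly decreasing in $i$, and $n_kp^k<x_i-x_{i-1}<p^{k+1}$ for $1\le i\le r$. In particular $x_i-x_j>(i-j)n_kp^k\ge(i-j)p^k\ge\deg_\theta Q_{i-j}$ whenever $j<i$, so in the sum defining $B_{n,N}$ the term $i=n$ (which is just $\alpha_{n,N}(\theta)$, of degree exactly $x_n$) strictly dominates the $\theta$-degree of all the others; hence no cancellation occurs at top degree and $\deg_\theta B_{n,N}=x_n$ for $0\le n\le r$ — in particular all $B_{n,N}$ are nonzero.

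It remains to draw the polygon: it is the lower convex hull of the points $(r-n,\,v_\infty(B_{n,N}(\theta)))=(r-n,\,-x_n)$ for $n=0,\dots,r$. Because the gaps $x_n-x_{n-1}$ strictly decrease with $n$, all $r+1$ of these points are vertices, so the polygon has exactly $r$ sides, each of horizontal length one, the side between abscissas $r-n$ and $r-n+1$ having slope $x_n-x_{n-1}$. These slopes are pairwise distinct and lie in $(n_kp^k,p^{k+1})\subseteq(p^k,p^{k+1})$, which is precisely the shape required in the first paragraph, and the proof concludes.

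The only genuine computation, as opposed to bookkeeping, is the degree identity $\deg_\theta B_{n,N}=x_n$; within it the delicate point is the no-cancellation claim, which uses the \emph{strict} inequality $x_i-x_{i-1}>n_kp^k\ge p^k$ (so that it still works when $n_k=1$) and the fact that $\deg_\theta Q_m\le p^km$ is a genuine estimate. Everything else is an assembly of Propositions \ref{PropositionATR4} and \ref{PropositionATR5}. One could instead run the same Newton-polygon analysis on the truncated polynomial $t^r\sum_{i=0}^r\alpha_{i,N}(\theta)t^{-i}$ and then transport the conclusion to $B_N(\theta,t)$ — equivalently to $\mathcal L_N(t)$ — by way of Lemma \ref{LemmaATR4}, using that $B_N(\theta,t)$ has exactly $r$ roots; but the direct argument above seems the more transparent of the two.
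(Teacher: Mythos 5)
Your proof is correct, and its backbone coincides with the paper's: the Newton polygon of $B_N(\theta,t)$ over $\mathbb F_p((\frac{1}{\theta}))$ has all of $(r-n,-\deg_tS_n(N))$, $n=0,\dots,r$, as vertices, and the two slope estimates extracted from the proof of Proposition \ref{PropositionATR5} (strictly decreasing gaps, each gap lying in $(n_kp^k,p^{k+1})$) give $r$ sides of horizontal length one with pairwise distinct integer slopes avoiding every power of $p$, whence all three assertions at once. Where you genuinely diverge is in how the coefficient degrees $\deg_\theta B_{n,N}=\deg_tS_n(N)$ are established. The paper reaches them indirectly: it first compares the Newton polygon of $B_N(t,\theta)$ \emph{in the variable $t$ over $K_\infty$} with that of the truncation $\Lambda_r$ via the entire-function Lemma \ref{LemmaATR4}, anchored by Proposition \ref{PropositionATR6} which pins down the total degree $b_N=\deg_tS_r(N)$; this detour also produces the factorization of $B_N(t,\theta)$ into $r$ distinct irreducibles over $K_\infty$ of degrees in $(n_kp^k,p^{k+1})$ and the fact that its roots avoid $\cup_{i\in\mathbb Z}(\mathbb F_p((\frac{1}{\theta^{p^i}}))^{ab})^{perf}$ — information the theorem itself does not require. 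Your convolution argument — multiplying $\mathcal L_N(t)=\sum_i\alpha_{i,N}(\theta)t^{-i}$ by $t^r\bar\pi^{-1}\prod_l\bar\omega(\theta^{p^l})^{n_l}$ and bounding $\deg_\theta Q_m\leq p^km$ — obtains the same degree identities in one step; it is in effect the mirror image of the monomial-degree estimate the paper carries out inside the proof of Proposition \ref{PropositionATR6}, performed directly in the variable ordering the theorem concerns. What you gain is a shorter, purely algebraic path that bypasses Lemma \ref{LemmaATR4} and Proposition \ref{PropositionATR6} altogether; what you give up is only the side information about $B_N(t,\theta)$ as a polynomial in $t$. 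The one delicate point — that the strict inequality $x_i-x_j>(i-j)n_kp^k\geq(i-j)p^k\geq\deg_\theta Q_{i-j}$ forbids cancellation at top degree even when $n_k=1$ — is correctly identified and handled.
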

\begin{proof}${}$\par
 Recall that $B_N(t,\theta)$ is a monic polynomial in $\theta$ such that $\deg_\theta B_N(t,\theta)=r.$ We can assume that $r\geq 1.$ By Proposition \ref{PropositionATR6}, the leading coefficient of $B_N(t,\theta)$ as a polynomial in $t$ is in $\mathbb F_p^\times$ and:
$$b_N=\deg_tB_N(t,\theta)>r.$$ Let $S=\{ \theta^{p^i}, i\geq -k\}.$  Then if $\alpha \in \mathbb C_\infty$ is a zero of $L_N(t)$ and $\alpha \not \in S,$ $\alpha$ must be a zero of $B_N(t,\theta).$ Observe that by Proposition \ref{PropositionATR5} and Proposition \ref{PropositionATR6}, we have:
$$\deg_t\Lambda_r(N)=\deg_tB_N(t, \theta).$$
By Proposition \ref{PropositionATR5}, the zeros in $\mathbb C_\infty$ of $\Lambda_r(N)$ are not in $S.$ By  Lemma \ref{LemmaATR1} and Lemma \ref{LemmaATR4}, $\theta^{-r}B_N(t, \theta)$ and $\Lambda_r(t)$ have the same Newton polygon. Thus, by the proof of Proposition  \ref{PropositionATR5} and the properties of Newton polygons ( \cite{GOS}, chapter 2), we get in $K_\infty[t]:$
$$B_N(t, \theta) =\lambda \prod_{j=1} ^r P_j(t),$$
where $\lambda\in \mathbb F_p^\times, $ $P_j(t)$ is an irreducible monic element in $K_\infty[t],$ $P_j(t)\not =P_{j'}(t) $ for $j\not =j'.$  Furthermore each root of $P_j(t)$ generates a totally ramified extension of $K_\infty$ and  $p^{k+1} >\deg_tP_j(t) > n_kp^k.$ Also note that 
$\deg_tP_j(t)\not \equiv 0\pmod{p^k} $ and $\deg_t P_j(t)\equiv 1\pmod{p-1}.$ Observe that if $x\in \cup_{i\in \mathbb Z}(\mathbb F_p((\frac{1}{\theta^{p^i}}))^{ab})^{perf},$ then there exist $l\geq 0, m\in \mathbb Z,$ $d\geq 1,$ $p\equiv 1\mod{d},$ such that $v_\infty (x)= \frac{m}{dp^l}.$ Thus, $P_j(t) $ has no roots in $\cup_{i\in \mathbb Z }(\mathbb F_p((\frac{1}{\theta^{p^i}}))^{ab})^{perf}.$ \par
 Write:
$$\theta^{-r}B_N(t, \theta) =\sum_{j=0}^r \beta_j(t) \theta^{-j}, \beta_j(t)\in \mathbb F_p[t].$$
Observe that $\beta_0(t)=1$ and by the above discussion,  $\theta^{-r}B_N(t, \theta)$ and $\Lambda_r(t)$ have the same Newton polygon (as polynomials in $t$). Now, by Proposition \ref{PropositionATR5} and Proposition \ref{PropositionATR6}, we get:
$$\deg_t\beta_r(t)=\deg_t\alpha_r(t).$$
We deduce that:
$$i=0, \ldots,r, \deg_t\beta_i(t)=\deg_t\alpha_i(t).$$
By the proof of  Proposition \ref{PropositionATR5}, for $i\in \{1, \ldots, r-1\},$ $\deg_t\beta_{i+1}(t)-\deg_t \beta_i(t)<\deg_t\beta_{i}(t)-\deg_t \beta_{i-1}(t) .$ Thus the edges of the Newton polygon  of $\theta^{-r} B_N(t, \theta)$ viewed as polynomial in $\frac{1}{\theta}$ are $(i, -\deg_t(\beta_i(t))), i=0, \ldots, r.$ 
\end{proof}
\section{Some hints for  Problem \ref{Conjecture} for  general $q.$}
In this section $q$ is no longer assumed to be equal to $p.$\par
\subsection{The work of J. Sheats}${}$\par
For $N,d\geq 1,$ we set $U_d(N)=U_d(N, (q-1, \ldots, q-1)).$  Thus:
$$S_d(N):= \sum_{a\in A_{+,d}} a(t)^N=(-1)^d\sum_{{\bf m} \in U_d(N)}C(N, {\bf m}) t^{\deg {\bf m}} .$$
J. Sheats  proved (\cite{SHE}, Lemma 1.3) that if $U_d(N)\not =\emptyset,$ $U_d(N)$ has a unique optimal element and it is the greedy element of $U_d(N).$  In particular $U_d(N)\not =\emptyset \Leftrightarrow S_d(N)\not =0.$  Observe that if ${\bf m}=(m_0, \ldots, m_{d})\in U_d(N),$ then $(m_0, \ldots, m_{d-2}, m_{d-1}+m_d) \in U_{d-1}(N).$ In particular $U_d(N)\not =\emptyset \Rightarrow U_{d-1}(N)\not =\emptyset.$
\begin{proposition}\label{PropositionATR7}${}$\par
\noindent 1) Let $d\geq 1$ such that $U_d(N)\not =\emptyset.$ Then, for $j\in \{1, \ldots, d-1\},$ we have:
$$\deg_tS_j(N)-\deg_tS_{j-1}(N)>\deg_tS_{j+1}(N)-\deg_tS_j(N).$$
\noindent 2) Let $d\geq 1$ such that $U_{d+1}(N)\not =\emptyset.$ Let ${\bf m}$ be the greedy element of $U_{d+1}(N).$ Then:
$$\deg_tS_d(N)>N(d-1),$$
and:
$$\deg_tS_d(N)-\deg_tS_{d-1}(N) >m_{d+1}.$$
\end{proposition}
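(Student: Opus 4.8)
The plan is to reduce Proposition~\ref{PropositionATR7} to the combinatorial properties of the greedy (equivalently, unique optimal) element of $U_d(N)$ established by Sheats. First I would recall the basic structure: if ${\bf m}=(m_0,\dots,m_d)$ is the greedy element of $U_d(N)$, then $\deg_tS_d(N)=\deg{\bf m}=\sum_{n=0}^d n\,m_n$, and collapsing the last two coordinates gives the greedy element $(m_0,\dots,m_{d-2},m_{d-1}+m_d)$ of $U_{d-1}(N)$ (this is the observation already made in the excerpt, which follows from Sheats's uniqueness statement since the collapsed tuple is automatically in $U_{d-1}(N)$ and optimality is preserved by the greedy construction). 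From this, $\deg_tS_d(N)-\deg_tS_{d-1}(N)=\sum_{n=0}^d n\,m_n-\sum_{n=0}^{d-1}n\,m_n^{(d-1)}$ where $m_n^{(d-1)}=m_n$ for $n\le d-2$ and $m_{d-1}^{(d-1)}=m_{d-1}+m_d$; the difference telescopes to $d\,m_d+(d-1)m_{d-1}-(d-1)(m_{d-1}+m_d)=m_d$. So each successive difference $\deg_tS_j(N)-\deg_tS_{j-1}(N)$ equals the top coordinate of the greedy element of $U_j(N)$. Part~1) then amounts to showing this top coordinate is strictly decreasing in $j$ along the chain of greedy elements, and part~2)'s second inequality is the special case comparing level $d$ to level $d+1$ when we know $U_{d+1}(N)\neq\emptyset$: namely $\deg_tS_d(N)-\deg_tS_{d-1}(N)=\tilde m_d$ where $\tilde m_d$ is the $d$-th coordinate of the greedy element of $U_d(N)$, and I want $\tilde m_d>m_{d+1}$ with $m_{d+1}$ the last coordinate of the greedy element of $U_{d+1}(N)$.

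Next I would invoke the precise description of the greedy element's coordinates from Sheats's Lemma~1.3 (the analogue, for general $q$, of Lemma~\ref{LemmaATR7}): the greedy element distributes the base-$q$ digits $p^{e_1}\le\cdots\le p^{e_{\ell_q(N)}}$ of $N$ (grouped into $q-1$ per coordinate, since $\bar{\bf k}=(q-1,\dots,q-1)$ here) so that the \emph{largest} powers go into the \emph{highest-index} coordinates. Concretely, $m_d$ (top coordinate at level $d$) is a sum of $q-1$ of the largest powers of $q$ dividing the digit-decomposition, $m_{d-1}$ the next $q-1$ largest, and so on. When we pass from level $d$ to level $d+1$, the same pool of digits is spread over one more coordinate, so the new top coordinate $m_{d+1}$ at level $d+1$ consumes the top $q-1$ digits, and the old top slot shifts down — and one checks the resulting top coordinate at level $d+1$ is no larger than the one at level $d$, with the key point being strict inequalities because of the "$p^{k+1}>\cdot>n_kp^k$" type bounds (appearing already in the proofs of Propositions~\ref{PropositionATR5} and~\ref{PropositionATR6}): the top $q-1$ digits all sit in the range $(n_kq^k,q^{k+1})$, forcing $m_{d+1}>n_kq^k$ and forcing strict monotonicity of the top-coordinate sequence. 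The bound $\deg_tS_d(N)>N(d-1)$ is exactly Proposition~\ref{PropositionATR3} with ${\bf k}=(q-1,\dots,q-1)$ once we observe $U_{d+1}(N)\neq\emptyset$ implies $d(q-1)\le\ell_q(N)-q$ (otherwise $U_{d+1}(N,(q-1,\dots,q-1))$ would be empty by the cardinality criterion, since $(d+1)(q-1)>\ell_q(N)$), so I would just cite that proposition.

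Thus the proof skeleton is: (i) telescoping identity $\deg_tS_j(N)-\deg_tS_{j-1}(N)=$ (top coordinate of greedy element of $U_j(N)$), valid whenever $U_j(N)\neq\emptyset$; (ii) strict monotonicity of these top coordinates in $j$, deduced from Sheats's explicit greedy description together with the digit-range bounds; (iii) for part~2), combine (i)+(ii) to get $\deg_tS_d(N)-\deg_tS_{d-1}(N)>m_{d+1}$, and cite Proposition~\ref{PropositionATR3} for $\deg_tS_d(N)>N(d-1)$ after checking the hypothesis $d(q-1)\le\ell_q(N)-q$. The main obstacle I anticipate is step~(ii): making rigorous the claim that "spreading the fixed digit-pool over one more coordinate strictly decreases the top coordinate" requires carefully tracking how Sheats's greedy algorithm reallocates digits between consecutive levels — in particular handling the boundary case where a coordinate receives fewer than a full block of $q-1$ large digits, and confirming that the optimality/uniqueness in Sheats's lemma genuinely forces the descending-assignment of powers rather than merely permitting it. I would likely isolate this as a separate combinatorial lemma about $U_d(N)$ and prove it by comparing the greedy elements of $U_d(N)$ and $U_{d+1}(N)$ term by term, using an exchange argument identical in spirit to Case~2 of the proof of Proposition~\ref{PropositionATR2}.
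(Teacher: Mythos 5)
There is a genuine gap, and it is concentrated in the fact that this proposition lives in the \emph{general $q$} section of the paper while your two main tools are valid only for $q=p$. First, your "concrete" description of the greedy element of $U_d(N)$ --- the largest $q-1$ base-$q$ digits of $N$ go to the top coordinate, the next $q-1$ to the next, etc. --- is the content of Lemma \ref{LemmaATR7}, which is proved only for $q=p$; for general $q$ the interaction between the no-$p$-carrying condition and the mod-$(q-1)$ condition destroys this block structure (this is precisely why Sheats's analysis is delicate), so your step (ii) rests on a false premise. Second, Proposition \ref{PropositionATR3}, which you cite for $\deg_tS_d(N)>N(d-1)$, is stated with hypotheses in base $p$ and proved via Lemma \ref{LemmaATR7}; it cannot be invoked for general $q$. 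The paper instead gets this bound from Sheats's Lemma 1.3 and Proposition 4.6 applied to $N-m_{d+1}$. Third, your telescoping identity is only an inequality as you have justified it: since the collapse $(m_0,\dots,m_{d-2},m_{d-1}+m_d)$ of the greedy element of $U_d(N)$ merely \emph{lies in} $U_{d-1}(N)$ (which is all the paper asserts), you get $\deg_tS_{d-1}(N)\geq\deg_tS_d(N)-m_d$, i.e.\ the difference of degrees is \emph{at most} the top coordinate; equality would require the collapse to be optimal in $U_{d-1}(N)$, which you assert ("optimality is preserved by the greedy construction") but do not prove, and which is not obvious for general $q$. Finally, the strict monotonicity of the top coordinates, which you correctly identify as the main obstacle, is left as a plan; for general $q$ this is essentially the content of Sheats's proof of his Theorem 1.1, which is exactly what the paper cites for part 1).

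For comparison, the paper's proof of part 2) avoids the telescoping identity entirely: it deletes the last coordinate of the greedy element ${\bf m}$ of $U_{d+1}(N)$ to obtain the greedy element of $U_d(N-m_{d+1})$, applies Sheats's Proposition 4.6 to get $\deg_tS_d(N-m_{d+1})>(N-m_{d+1})(d-1)$, then merges the last two coordinates of ${\bf m}$ to produce an element of $U_d(N)$ of degree at least $\deg_tS_d(N-m_{d+1})+dm_{d+1}>(d-1)N+m_{d+1}$, and concludes with the trivial bound $\deg_tS_{d-1}(N)\leq (d-1)N$. If you want to salvage your route, you would need to prove, for general $q$, both that the collapse of the optimal element is optimal one level down and that the top coordinates strictly decrease --- at which point you would have reproved a substantial part of Sheats's theorem rather than used it.
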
\par
\begin{proof}${}$\par
\noindent  1) Observe that this assertion  is a consequence of the proof of \cite{SHE}, Theorem 1.1 (see pages 127 and 128 of \cite{SHE}).\par
\noindent 2) Let ${\bf m}=(m_0, \ldots, m_{d+1})$ be the greedy element of $U_{d+1}(N).$ Define ${\bf m}'=(m_0, \ldots, m_{d})\in U_d(N-m_{d+1}).$ Then:
$$m_d\equiv 0\pmod{q-1}, m_d \geq q-1.$$
Furhermore, observe that ${\bf m}'$ is the greedy element of $U_d(N-m_{d+1}).$ By \cite{SHE}, Lemma 1.3 and Proposition 4.6, we get:
$$\deg_tS_d(N-m_{d+1})>(N-m_{d+1}) (d-1).$$
Let ${\bf m}''=(m_0, \ldots, m_{d-1}, m_d+m_{d+1})\in U_d(N).$ We have:
$$\deg_tS_d(N)\geq m_1+\cdots +(d-1) (m_{d-1})+d(m_d+m_{d+1}).$$
Thus:
\begin{eqnarray*}
\deg_tS_d(N)&\geq&  \deg_tS_d(N-m_{d+1})+dm_{d+1}\\
&>& (N-m_{d+1}) (d-1)+dm_{d+1}= (d-1)N +m_{d+1}.
\end{eqnarray*}
Thus:
$$\deg_t S_d(N)>N(d-1),$$
$$\deg_tS_d(N)-\deg_tS_{d-1}(N) \geq \deg_tS_d(N)-(d-1)N>m_{d+1}.$$
\end{proof}
To conclude this paragraph, we recall the following crucial result due to G. B\"ockle (\cite{BOC}, Theorem 1.2):
$$S_d(N)\not =0\Leftrightarrow d(q-1)\leq {\rm Min}\{ \ell_q(p^iN), i\in \mathbb N\}.$$
An integer $N\geq 1$ will be called $q$-minimal if:
$$[\frac{\ell_q(N)}{q-1}]={\rm Min}\{ [\frac{\ell_q(p^iN)}{q-1}], i\in \mathbb  N\}.$$
\subsection{Consequences of Sheats results}${}$\par
Let $N\geq 1,$ and write:
$$L_N(t)=\sum_{i\geq 0} \alpha_{i,N}(t) \theta^{-i}, \alpha_{i,N}(t) \in \mathbb F_q[t].$$
\begin{proposition}\label{PropositionATR8} Let $d\geq 1$ such that $U_{d+1}(N)\not =\emptyset.$ Set: 
$$\Lambda_d(t)= \sum_{i=0}^d\alpha_{i,N}(t) \theta^{-i}\in K_\infty[t].$$
Then $\deg_t\Lambda_d(t)=\deg_tS_d(N)$ and the edge points of the Newton polygon of $\Lambda_d(t)$ are:
$$(\deg_tS_i(N),i), i=0, \ldots, d.$$
\end{proposition}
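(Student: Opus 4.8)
The plan is to mimic closely the argument that established Proposition \ref{PropositionATR5} in the case $q=p$, since the key combinatorial input there --- that the greedy element is the unique optimal element of $U_d(N,(p-1,\dots,p-1))$ --- is now available for general $q$ thanks to Sheats (\cite{SHE}, Lemma 1.3), together with the strict-concavity estimate recalled in Proposition \ref{PropositionATR7}. First I would reduce to the case $d\geq 1$, the statement being trivial otherwise. Since $U_{d+1}(N)\neq\emptyset$ implies $U_j(N)\neq\emptyset$ for all $j\leq d+1$ (by the collapsing operation $(m_0,\dots,m_{d+1})\mapsto(m_0,\dots,m_{d-1},m_d+m_{d+1})$ noted just before Proposition \ref{PropositionATR7}), all the $S_j(N)$ for $j=0,\dots,d$ are nonzero, so the points $(\deg_t S_j(N),j)$ are well-defined.

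Next I would identify $\deg_t\alpha_{i,N}(t)$ with $\deg_t S_i(N)$ for $i=0,\dots,d$. By Lemma \ref{LemmaATR2}, $\alpha_{i,N}(t)$ is a sum over ${\bf k}$ with $\ell({\bf k})+w({\bf k})=i$ of terms $\pm C_{{\bf k}}\sum_{a\in A_{+,\ell({\bf k})}}a(t)^N a^{{\bf k}}$, and each such term has $t$-degree equal to $\deg{\bf m}$ where ${\bf m}$ is the greedy (= unique optimal) element of $U_{\ell({\bf k})}(N,\bar{\bf k})$. The point is that among all ${\bf k}$ contributing to a fixed $i$, the maximal $t$-degree is attained \emph{only} at ${\bf k}=(0,\dots,0)\in\mathbb N^i$, for which $\bar{\bf k}=(q-1,\dots,q-1)$ and the term is (up to sign) $S_i(N)$. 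To see this I would run the analogue of Proposition \ref{PropositionATR3}: if $\ell({\bf k})=j<i$ then one loses at least $w({\bf k})=i-j\geq 1$ in the $\theta$-grading but only gains in the $t$-degree a quantity bounded by $\deg_t S_j(N)-\deg_t S_i(N)$ plus a controlled term; invoking part 2 of Proposition \ref{PropositionATR7} ($\deg_t S_j(N)>N(j-1)$ and the increments $\deg_t S_j(N)-\deg_t S_{j-1}(N)$ decrease and are each $>m_{d+1}$, in particular each increment is $<N$) one gets that passing from $\ell({\bf k})=i$ down to $\ell({\bf k})=j$ the $t$-degree cannot increase enough to compensate, so the $(0,\dots,0)$ term strictly dominates. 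Hence $\deg_t\alpha_{i,N}(t)=\deg_t S_i(N)$, and therefore $\deg_t\Lambda_d(t)=\deg_t S_d(N)$.

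Finally I would read off the Newton polygon. By the same bookkeeping, the coefficient of $\theta^{-i}$ in $\Lambda_d(t)$ has $t$-degree exactly $\deg_t S_i(N)$, so the lower convex hull of the points $\{(\deg_t S_i(N), i) : i=0,\dots,d\}$ (plotting $t$-degree against $-\infty$-valuation contribution $i$) is the relevant Newton polygon; part 1 of Proposition \ref{PropositionATR7} says precisely that the successive slopes $\deg_t S_j(N)-\deg_t S_{j-1}(N)$ are strictly decreasing, i.e.\ the points are in strictly convex position, so every one of them is a vertex. This gives that the edge points are exactly $(\deg_t S_i(N),i)$, $i=0,\dots,d$, as claimed.

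The main obstacle is the degree-domination step of the previous paragraph: for $q=p$ it was Proposition \ref{PropositionATR3}, whose proof used explicit estimates on the $p$-adic digits $e_l$ of $N$ and the formula for $\sum n x^n$. For general $q$ one must redo this with ${\bf k}\in\{0,\dots,q-1\}^d$ and $\bar{\bf k}$ possibly not all $q-1$, extracting the needed inequality from Sheats' structure of the greedy element (\cite{SHE}, Lemma 1.3 and Proposition 4.6) and from the monotonicity in Proposition \ref{PropositionATR7}; this is where the argument is most delicate and where one has to be careful that $U_{d+1}(N)\neq\emptyset$ (rather than merely $U_d(N)\neq\emptyset$) is genuinely used, exactly as in the statement.
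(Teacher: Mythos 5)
Your proposal follows the paper's proof of Proposition \ref{PropositionATR8} essentially verbatim: decompose $\alpha_{i,N}(t)$ via Lemma \ref{LemmaATR2}, show that the ${\bf k}=(0,\ldots,0)\in\mathbb N^i$ term strictly dominates so that $\deg_t\alpha_{i,N}(t)=\deg_tS_i(N)$, and read off the edge points from the strict concavity of the increments in Proposition \ref{PropositionATR7}, assertion 1). The one step you single out as the ``main obstacle'' --- a general-$q$ analogue of Proposition \ref{PropositionATR3} --- is not actually needed: the paper closes the domination step with the trivial bound $\deg_t\sum_{a\in A_{+,\ell({\bf k})}}a(t)^Na^{{\bf k}}\leq \ell({\bf k})N$, which for any contributing ${\bf k}$ with $w({\bf k})\geq 1$ (hence $\ell({\bf k})\leq i-1$) is already strictly smaller than $\deg_tS_i(N)>N(i-1)$ by Proposition \ref{PropositionATR7}, assertion 2).
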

\begin{proof} The proof uses  similar arguments as that used in the proof of Proposition \ref{PropositionATR5}. Let $j\geq 0,$ then (see Lemma \ref{LemmaATR2}), we have:
$$\alpha_{j,N}(t)=\sum_{\ell({\bf k})+w({\bf k})=j} (-1)^{\ell ({\bf k})}C_{{\bf k}}\sum_{a\in A_{+,\ell({\bf k})}} a(t)^N a^{{\bf k}}.$$
Observe that:
$$\deg_t \sum_{a\in A_{+,\ell({\bf k})}} a(t)^N a^{{\bf k}}\leq \ell({\bf k})N.$$
Thus, for $j=0, \ldots, d,$ by Proposition \ref{PropositionATR7}, assertion 2),  we get:
$$\deg_t\alpha_{j,N}(t)= \deg_tS_j(N).$$
In particular, again by Proposition \ref{PropositionATR7}, assertion 2), we have: 
$$\deg_t\Lambda_d(t)=\deg_tS_d(N).$$
Finally, by Proposition \ref{PropositionATR7}, assertion 1), $(\deg_tS_i(N),i), i=0, \ldots, d,$ are the edge points of the Newton polygon of $\Lambda_d(t).$

\end{proof}
\begin{lemma}\label{LemmaATR8} We assume that $N$ is $q$-minimal, $N\equiv 1\pmod{q-1}.$  We also assume that $r\geq 1$ (recall that $r=\frac{\ell_q(N)-q}{q-1}$).\par
\noindent 1) Write $N=\sum_{l=0}^k n_lq^l, n_0, \cdots ,n_k \in \{0, \ldots, q-1\}, n_k\not =0.$ Then there exists an integer $0\leq m\leq k$ such that $n_m\not \equiv 0\pmod{p}.$\par
\noindent 2) Let $n={\rm Max}\{ l, 0\leq l\leq k, n_l\not \equiv 0\pmod{p}\}.$ Let ${\bf m}=(m_0, \ldots, m_{r+1})\in U_{r+1}(N)$ be the greedy element, then:
$$m_{r+1} =q^n.$$
\noindent 3) Assume that $n<k.$  Then, for $i\in \{ 1, \ldots, r-1\},$ we have:
$$\deg_tS_i(N)-\deg_tS_{i-1}(N)>pq^k.$$
\end{lemma}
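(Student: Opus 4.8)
We prove the three assertions in order, since 2) relies on 1) and 3) on 2). Write $q=p^s$ and recall $\ell_q(N)=(r+1)(q-1)+1$. Four facts are used repeatedly: (i) B\"ockle's criterion $U_d(M)\neq\emptyset\iff d(q-1)\le\min_{i\ge0}\ell_q(p^iM)$ (\cite{BOC}); (ii) subadditivity $\ell_q(a+b)\le\ell_q(a)+\ell_q(b)$, so $\ell_q(a-b)\ge\ell_q(a)-\ell_q(b)$; (iii) an addition carry-free in base $p$ is also carry-free in base $q$, so a sum $N=\sum_l m_l$ coming from an element of $U_d(N)$ has $\ell_q(N)=\sum_l\ell_q(m_l)$; and (iv) $\ell_q(q^jM)=\ell_q(M)$ while $\ell_q(p^j)\le p^{s-1}\le q-1$ for all $j$. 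For 1), the case $q=p$ is immediate (a base-$p$ digit divisible by $p$ is zero), so let $s\ge2$, whence $[\ell_q(N)/(q-1)]=r+1$. If every $n_l$ is divisible by $p$, write $n_l=p\widetilde n_l$ with $0\le\widetilde n_l<p^{s-1}$; then $p^{s-1}n_l=q\widetilde n_l$, so $p^{s-1}N=\sum_l\widetilde n_l\,q^{l+1}$ and $\ell_q(p^{s-1}N)=\ell_q(N)/p$. Since $(p-1)(r+1)(q-1)\ge2$ (as $r\ge1$), this is $<(r+1)(q-1)$, hence $[\ell_q(p^{s-1}N)/(q-1)]\le r<r+1=[\ell_q(N)/(q-1)]$, contradicting $q$-minimality.

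For 2), $q$-minimality gives $\ell_q(p^iN)\ge(r+1)(q-1)$ for all $i$, so $U_{r+1}(N)\neq\emptyset$ by (i); let ${\bf m}=(m_0,\dots,m_{r+1})$ be its greedy element. By (iii), and since $m_0,\dots,m_r$ are positive multiples of $q-1$ (so $\ell_q(m_l)\ge q-1$ for $l\le r$), the equality $(r+1)(q-1)+1=\ell_q(N)=\sum_{l=0}^{r+1}\ell_q(m_l)$ forces $\ell_q(m_l)=q-1$ for $l=0,\dots,r$ and $\ell_q(m_{r+1})=1$, so $m_{r+1}=q^j$ for some $j$. Carry-freeness makes the base-$p$ digit of $N$ at position $sj$, which is $n_j\bmod p$, at least $1$; hence $n_j\not\equiv0\pmod p$, so $j\le n$. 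Conversely $q^n$ is attainable: by (ii) and (iv), $\ell_q(p^i(N-q^n))\ge\ell_q(p^iN)-\ell_q(p^iq^n)=\ell_q(p^iN)-\ell_q(p^i)\ge(r+1)(q-1)-(q-1)=r(q-1)$ for all $i$, so $U_r(N-q^n)\neq\emptyset$ by (i), producing an element of $U_{r+1}(N)$ with last entry $q^n$. As the greedy element is largest in its last entry, $m_{r+1}=q^n$. (For $q=p$ one has $n=k$ and this is Lemma \ref{LemmaATR7}.) We also record that $\ell_q(m_l)=q-1$ for $l=0,\dots,r$.

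For 3), fix $i\in\{1,\dots,r-1\}$, so $r\ge2$ and $U_r(N)\neq\emptyset$. By Proposition \ref{PropositionATR7}(1) the increments $\Delta_j:=\deg_tS_j(N)-\deg_tS_{j-1}(N)$ strictly decrease for $1\le j\le r$, so it suffices to prove $\Delta_{r-1}>pq^k$. By Proposition \ref{PropositionATR7}(2) with $d=r-1$, $\Delta_{r-1}$ exceeds the last entry $v$ of the greedy element of $U_r(N)$, so it is enough to show $v\ge pq^k$. Let ${\bf m}=(m_0,\dots,m_{r+1})$ be the greedy element of $U_{r+1}(N)$: by 2), $m_{r+1}=q^n$ and $\ell_q(m_l)=q-1$ for $l\le r$, and merging the last two entries yields $(m_0,\dots,m_{r-1},m_r+q^n)\in U_r(N)$ (carry-freeness being preserved), so $v\ge m_r+q^n\ge m_r$. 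Since $n<k$, the leading digit $n_kq^k$ of $N$—with $n_k\ge p$ since $p\mid n_k\neq0$—cannot lie in $m_{r+1}=q^n$, and Sheats' description of the greedy element then places it inside $m_r$, the highest-index entry able to absorb it (each $m_l$, $l\le r$, having base-$q$ digit sum exactly $q-1\ge n_k$). Hence $m_r\ge n_kq^k\ge pq^k$, so $v\ge pq^k$ and $\Delta_j\ge\Delta_{r-1}>pq^k$ for $1\le j\le r-1$.

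Assertions 1) and 2) use only B\"ockle's criterion and elementary digit-sum inequalities and should be routine. The main obstacle is the last step of 3): showing that in Sheats' greedy element of $U_r(N)$ the leading base-$q$ digit of $N$ occupies the free top entry. Although dictated intuitively by the "largest lexicographically" characterization, a rigorous proof must follow the greedy algorithm of \cite{SHE} and, above all, control the base-$q$ digit sums of the pieces in a decomposition only assumed carry-free in base $p$—carry-freeness in base $p$ implying, but not implied by, carry-freeness in base $q$.
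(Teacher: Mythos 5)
Parts 1) and 2) are correct and coincide with the paper's argument: 1) is the same contradiction with $q$-minimality via $\ell_q(p^{s-1}N)=\ell_q(N)/p$, and 2) is the same digit-sum count forcing $\ell_q(m_{r+1})=1$ plus the construction of an element of $U_{r+1}(N)$ with last entry $q^n$ (your derivation of $U_r(N-q^n)\neq\emptyset$ from $\ell_q(p^i(N-q^n))=\ell_q(p^iN)-\ell_q(p^i)\geq r(q-1)$ is in fact cleaner than the paper's phrasing). The reduction in 3) to bounding the last entry of the greedy element of $U_r(N)$ below by $m_r$, via Proposition \ref{PropositionATR7}, is also exactly the paper's route.

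The gap is the one you flagged yourself: the claim that ``Sheats' description of the greedy element places the leading digit $n_kq^k$ inside $m_r$'' is both unproven and stronger than what is true — the digit $n_k$ at position $k$ may well be split among several of the entries $m_0,\dots,m_r$, so you cannot conclude $m_r\geq n_kq^k$. The paper's argument avoids this entirely and you should replace your last step by it. Since $n<k$, the definition of $n$ gives $p\mid n_k$, so the base-$p$ digit of $N$ at position $sk$ (writing $q=p^s$) is $0$; base-$p$ carry-freeness of $m_0+\cdots+m_{r+1}=N$ then forces the base-$q$ digit of \emph{each} $m_i$ at position $k$ to be $\equiv 0\pmod p$. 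On the other hand $n_k\neq 0$ and $m_{r+1}=q^n$ contributes nothing at position $k$, so some $m_i$ with $i\leq r$ has a nonzero digit there, hence a digit $\geq p$, hence $m_i\geq pq^k$. To transfer this to the \emph{last} entry of the greedy element of $U_r(N)$, either invoke (as the paper does) the monotonicity $m_0\leq\cdots\leq m_r$ of the greedy element to get $m_r\geq m_i$, or — avoiding any appeal to Sheats' algorithm — note that the defining conditions on $m_0,\dots,m_r$ are symmetric, so $(m_0,\dots,\widehat{m_i},\dots,m_r,\,m_i+m_{r+1})$ lies in $U_r(N)$ and its last entry is $\geq pq^k$; since the greedy element of $U_r(N)$ maximizes the last entry, Proposition \ref{PropositionATR7}(2) applies and gives $\Delta_{r-1}>pq^k$ as required. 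With this substitution your proof of 3) is complete; note also that the strict inequality $m_r>pq^k$ (rather than $\geq$) follows since $\ell_q(m_r)=q-1\neq p$.
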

\begin{proof}${}$\par
\noindent 1) Let's assume that the assertion is false. Then:
$$\ell_q(\frac{q}{p}N)=\frac{\ell_q(N)}{p}.$$
This contradicts the $q$-minimality of $N.$\par
\noindent 2) Observe that:
$$\forall i\geq 0, \ell_q(p^iN)= \ell_q(p^iN-p^iq^n)+\ell_q(p^i).$$
Therefore, $N-q^n$ is $q$-minimal. Thus, by B\"ockle's result:  $U_{r+1}(N-q^n )\not = \emptyset.$ This easily implies that there exits ${\bf n}=(n_0, \ldots, n_{r+1})\in U_{r+1}(N)$ such that:
$$n_{r+1}=q^n.$$
We have:
$$1+(q-1)(r+1)=\ell_q(N)= \sum_{i=0}^{r+1}\ell_q(m_i).$$
Furthermore:
$$\sum_{i=0}^r\ell_q(m_i)\equiv 0\pmod{q-1},$$
and
$$ \sum_{i=0}^r\ell_q(m_i) \geq (q-1) (r+1).$$
Thus:
$$\ell_q(m_{r+1})=1.$$
This implies that $m_{r+1}$ is a power of $q$ and since ${\bf m}$ is the greedy element of $U_{r+1}(N),$ we also have: $m_{r+1}\mid\geq q^n.$ Since  there is no carryover $p$-digits in the sum $m_0+\cdots+m_{r+1},$ by the definition of $n,$ we deduce that $ m_{r+1}=q^n.$\par

\noindent 3) Let ${\bf m}'=(m_0, \ldots, m_{r-1}, m_r+m_{r+1})\in U_r(N).$  If ${\bf n}$ is the greedy element of $U_r(N)$ then:
$$n_r\geq m_r+m_{r+1}.$$
Since ${\bf m}$ is the greedy element of $U_{r+1} (N), $ we have:
$$m_0\leq m_1\leq \cdots \leq m_r.$$
Since  there is no carryover $p$-digits in the sum $m_0+\cdots+m_{r+1},$ and $n<k, $ this implies that:
$$m_r=\sum_{l=0}^k j_l q^l, j_l\in \{ 0, \ldots, q-1\}, j_k\not =0, j_k\equiv 0\pmod{p}.$$
Thus:
$$m_r>pq^k.$$
It remains to apply Proposition \ref{PropositionATR7}.
\end{proof}

\subsection{Zeros of $B_N(\theta,t)$}${}$\par
The following theorem implies in particular Theorem \ref{TheoremATR-BC} in the case where $N$ is $q$-minimal.
\begin{theorem}\label{TheoremATR2} We assume that $N$ is $q$-minimal, $N\equiv 1\pmod{q-1}.$  We also assume that $r\geq 1.$\par
\noindent 1) $B_N(\theta, \theta)\not =0$ and the zeros of $B_N(t, \theta)$ are algebraic integers (i.e. they are integral over $A$). Furthermore:  $(r-1)N<\deg_tB_N(t, \theta) <rN.$\par
\noindent 2) $B_N(\theta, t)$ has only simple roots and its roots belong to $\mathbb F_p((\frac{1}{\theta}))\setminus\{\theta\}.$
\end{theorem}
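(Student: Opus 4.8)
The plan is to run the argument of Theorem \ref{TheoremATR1}, replacing the elementary $q=p$ combinatorics (Propositions \ref{PropositionATR2}--\ref{PropositionATR4}) by Sheats' theorem (Proposition \ref{PropositionATR7}), B\"ockle's theorem and Lemma \ref{LemmaATR8}. Since $N$ is $q$-minimal with $\ell_q(N)=(q-1)(r+1)+1$, B\"ockle's result gives $S_{r+1}(N)\neq 0$, hence $U_j(N)\neq\emptyset$ for all $j\le r+1$; in particular Proposition \ref{PropositionATR8} applies with $d=r$ and yields $\deg_t\alpha_{i,N}(t)=\deg_t S_i(N)$ for $i=0,\dots,r$. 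For assertion 1), $B_N(\theta,\theta)\neq 0$ is immediate from Corollary \ref{Corollary1} (valid because $r\ge 1$ forces $\ell_q(N)\ge 2q-1$). Granting the key equality $\deg_t B_N(t,\theta)=\deg_t S_r(N)$ established below, the bounds follow at once: writing ${\bf u}$ for the greedy element of $U_r(N)$ one has $\deg_t S_r(N)=\sum_{n=0}^r n u_n\le r(N-u_0)\le r(N-(q-1))<rN$ since $u_0\ge q-1$, while $\deg_t S_r(N)>(r-1)N$ is Proposition \ref{PropositionATR7} 2) with $d=r$; and that same equality together with Lemma \ref{LemmaATR6} (whose proof gives $\deg_\theta\lambda-r\le-\deg_t B_N(t,\theta)/N<-(r-1)$, forcing $\deg_\theta\lambda=0$ for the leading coefficient $\lambda$ in $t$) shows that $B_N(t,\theta)$ is, up to a scalar in $\mathbb F_p^\times$, monic in $t$ over $\mathbb F_p[\theta]\subset A$, so its zeros are integral over $A$.

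The heart of the proof is the equality $\deg_t B_N(t,\theta)=\deg_t S_r(N)$, with leading $t$-coefficient in $\mathbb F_p^\times$ (the analogue of Proposition \ref{PropositionATR6}). Expanding $\omega$ and $\widetilde{\pi}$ in the product formula for $B_N(t,\theta)$ gives the clean identity $\theta^{-r}B_N(t,\theta)=L_N(t)\cdot\prod_{j\ge1}(1-\theta^{1-q^j})\prod_{l=0}^k\prod_{j\ge0}(1-t^{q^l}\theta^{-q^j})^{-n_l}$, in which the second factor is a unit of $\mathbb F_q[t][[\frac{1}{\theta}]]$ whose coefficient of $\theta^{-m}$ has degree in $t$ at most $q^k m$ (the dominant contributions coming from $l=k$, $j=0$). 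Writing $\theta^{-r}B_N(t,\theta)=\sum_{j=0}^r\beta_j(t)\theta^{-j}$ with $\beta_j\in\mathbb F_p[t]$ and using $\deg_t\alpha_{i,N}(t)=\deg_t S_i(N)$, one gets $\deg_t\beta_j\le{\rm Max}\{q^k m+\deg_t S_{j-m}(N): 0\le m\le j\}$; the crux is that this maximum is attained only at $m=0$, for then the $m=0$ term (namely $\alpha_{j,N}(t)$, of degree exactly $\deg_t S_j(N)$) is not cancelled, whence $\deg_t\beta_j=\deg_t S_j(N)$ and in particular $\deg_t B_N(t,\theta)=\deg_t\beta_r=\deg_t S_r(N)$ with leading coefficient in $\mathbb F_p^\times$. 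By the strict concavity of $j\mapsto\deg_t S_j(N)$ (Proposition \ref{PropositionATR7} 1)), the dominance of the $m=0$ term reduces to the single inequality $\deg_t S_r(N)-\deg_t S_{r-1}(N)>q^k$. If $n:={\rm Max}\{l:n_l\not\equiv0\pmod p\}$ equals $k$, then by Lemma \ref{LemmaATR8} 2) the greedy element ${\bf m}$ of $U_{r+1}(N)$ has $m_{r+1}=q^k$, and Proposition \ref{PropositionATR7} 2) finishes it. If $n<k$, one uses Lemma \ref{LemmaATR8} 3) (whose proof yields $m_r\ge pq^k$ for that greedy element) together with the fact that the greedy element of $U_r(N)$ is obtained from that of $U_{r+1}(N)$ by merging the last two coordinates, which gives $\deg_t S_r(N)-\deg_t S_{r-1}(N)=m_r+m_{r+1}\ge pq^k+q^n>q^k$. \textbf{Extracting from Sheats' and B\"ockle's combinatorics this clean separation $>q^k$ of consecutive partial degrees, and in particular treating the case $n<k$, is the step I expect to be the main obstacle.}

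Given all this, assertion 2) is a Newton polygon computation for $B_N(\theta,t)$ --- the polynomial obtained from $B_N(t,\theta)$ by exchanging the two indeterminates --- regarded as a polynomial in $t$ over $K_\infty=\mathbb F_q((\frac{1}{\theta}))$. Since $B_N(t,\theta)=\sum_{j=0}^r\beta_j(t)\theta^{r-j}$ with $\beta_0=1$ and all $\beta_j\in\mathbb F_p[t]$ (Lemma \ref{LemmaATR5} 1)), we have $B_N(\theta,t)=\sum_{j=0}^r\beta_j(\theta)t^{r-j}$ with $v_\infty(\beta_j(\theta))=-\deg_t S_j(N)$ by the previous paragraph. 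Strict concavity of $j\mapsto\deg_t S_j(N)$ then shows the Newton polygon of $B_N(\theta,t)$ in $t$ has exactly $r$ sides, each of horizontal length $1$ and of pairwise distinct integer slopes; hence $B_N(\theta,t)$ has $r$ simple roots in $\mathbb C_\infty$, each lying in $K_\infty$ (a length-one side has a linear residual polynomial, whose root Hensel lifts into $K_\infty$) and with pairwise distinct integer valuations. Finally $B_N(\theta,t)\in\mathbb F_p[\theta,t]$, so the generator $\tau$ of $\Gal\bigl(\mathbb F_q((\frac{1}{\theta}))/\mathbb F_p((\frac{1}{\theta}))\bigr)$ permutes these roots while preserving $v_\infty$; as their valuations are pairwise distinct, $\tau$ fixes each of them, so they all lie in $\mathbb F_p((\frac{1}{\theta}))$. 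None of them is $\theta$, for otherwise $B_N(\theta,\theta)=0$, contradicting 1). This completes the proof.
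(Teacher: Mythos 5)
Your overall strategy is the same as the paper's: establish $\deg_t\beta_l(t)=\deg_tS_l(N)$ for $l=0,\dots,r$ by expanding the product formula for $\theta^{-r}B_N(t,\theta)$ and showing the $m=0$ term dominates, then read off both assertions from Newton polygons. Your treatment of assertion 2) (length-one segments of the Newton polygon of $B_N(\theta,t)$, roots with pairwise distinct integer valuations, Galois descent from $K_\infty$ to $\mathbb F_p((\frac{1}{\theta}))$) is correct and in fact cleaner than the paper's. However, there is a genuine gap exactly at the step you flagged, namely the case $n<k$ where $n=\mathrm{Max}\{l: n_l\not\equiv 0\pmod p\}$.

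The gap is twofold. First, your reduction of the dominance of the $m=0$ term to the single inequality $\deg_tS_r(N)-\deg_tS_{r-1}(N)>q^k$ rests on the uniform bound $q^km$ for the $t$-degree of the coefficient of $\theta^{-m}$ in $\prod_{l,j}(1-t^{q^l}\theta^{-q^j})^{-n_l}$. That bound is too coarse for $m=1$: the coefficient of $\theta^{-1}$ is $\pm\sum_l n_lt^{q^l}$ with $n_l$ reduced mod $p$, so its degree is $q^n$, not $q^k$. The paper exploits precisely this refinement: it only needs $\deg_tS_r(N)-\deg_tS_{r-1}(N)>q^n$, which is what Lemma \ref{LemmaATR8}~2) combined with Proposition \ref{PropositionATR7}~2) actually delivers ($m_{r+1}=q^n$), and it controls the $m\geq 2$ terms separately via the much larger gaps $\deg_tS_i(N)-\deg_tS_{i-1}(N)>pq^k$ for $i\leq r-1$ from Lemma \ref{LemmaATR8}~3). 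Second, your attempted proof of the stronger inequality $>q^k$ is unjustified: you assert that the greedy element of $U_r(N)$ is obtained from that of $U_{r+1}(N)$ by merging the last two coordinates and that $\deg_tS_r(N)-\deg_tS_{r-1}(N)=m_r+m_{r+1}$. Sheats' results as cited give only $n_r\geq m_r+m_{r+1}$ for the greedy element ${\bf n}$ of $U_r(N)$ (this is all the paper uses in Lemma \ref{LemmaATR8}~3), and the paper's statement there deliberately excludes $i=r$); neither the merge identity nor the equality of degree differences follows. Note that the paper's Corollary \ref{CorollaryATR1} explicitly entertains the possibility $\deg_tS_r(N)-\deg_tS_{r-1}(N)=q^i$ with $n<i\leq k$, i.e.\ $\leq q^k$, so the inequality you need may simply fail. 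To repair the argument you should replace the bound $q^k$ for the $m=1$ coefficient by $q^n$ and then follow the paper's two-tier estimate for $m=1$ versus $m\geq 2$.
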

\begin{proof} The proof is in the same spirit as that of the proofs of  Proposition \ref{PropositionATR6} and  Theorem \ref{TheoremATR1}.\par
 Write $N=\sum_{l=0}^k n_l q^l,$ where  $n_0, \ldots, n_k\in \{0, \ldots, q-1\},$ and  $n_k \not =0.$ Recall that $r+1=[\frac{\ell_q(N)}{q-1}].$ Let $n\geq 0$ be the integer such that $n={\rm  Max}\{l, n_l\not \equiv 0\pmod{p}\}$  (see Lemma \ref{LemmaATR8}). Let $\varepsilon_N(t)\in \mathbb F_p[t, \frac{1}{\theta}]$ be the polynomial determined by the congruence:
$$\varepsilon_N(t) \equiv  \Lambda_r(t) \prod_{j=0}^k\prod_{i\geq 0}(1-\frac{t^{q^j}}{\theta^{q^i}})^{n_j}\pmod{\frac{1}{\theta^{r+1}}\mathbb F_p[t][[\frac{1}{\theta}]]},$$
where $\Lambda_r(t)=\sum_{l=0}^r \alpha_{l,N}(t) \theta^{-l}.$ We can write:
$$\varepsilon_N(t) =\sum_{l=0}^r \eta_l(t) \theta^{-l}, \eta_l(t)\in \mathbb F_p[t].$$
Note that $\eta_l(t)$ is a $\mathbb F_p$-linear combination   of terms of the form 
$$x_{l,j,u}= \alpha_{l-u,N}(t) \theta^{-l+u} t^j \theta^{-u}, \ j\leq q^k u.$$
By Proposition \ref{PropositionATR8}, we have:
$$l=0, \ldots, r, \, \deg_t\alpha_{l, N}(t)=\deg_tS_l(N).$$
\noindent 1) Case $n=k.$\par
\noindent By Proposition \ref{PropositionATR7} and Lemma \ref{LemmaATR8} :
$$l=0, \ldots , h-1, \, \deg_t\alpha_{h-l, N}(t)< \deg_t\alpha_{h,N}(t)-q^k l.$$
Thus, if $l\not =h$ or $u\not =0,$   we get:
$$l=0, \ldots, h, \deg_tx_{l,j,u}<\deg_t\alpha_{l,N}(t).$$
Therefore:
$$l=0, \ldots, r, \, \deg_t\eta_l(t)= \deg_tS_l(N).$$
2) Case $n<k.$\par
\noindent  As in the proof of the case $n=k,$ we get by Proposition \ref{PropositionATR7} and Lemma \ref{LemmaATR8}:
$$l=0, \ldots, r-1, \, \deg_t\eta_l(t)= \deg_tS_l(N).$$
Furthermore, by Proposition \ref{PropositionATR7} and Lemma \ref{LemmaATR8}, we have for $l\geq 2$:
$$\deg_tS_{r-l}(N)< \deg_tS_{r-1}(N)-pq^k(l-1),$$
$$\deg_tS_r(N)-\deg_tS_{r-1}(N)>q^n.$$
Thus, for $u\geq 2$:
$$\deg_tx_{r,j,u}\leq \deg_tS_{r-u}(N)+q^ku< \deg_tS_r(N).$$
Now, observe that:
$$\prod_{j=0}^k\prod_{i\geq 0}(1-\frac{t^{q^j}}{\theta^{q^i}})^{n_j}\equiv 1-\frac{1}{\theta}\sum_{l=0}^k n_l t^{q^l} \pmod{\frac{1}{\theta^2}\mathbb F_p[t][[\frac{1}{\theta}]]}.$$
Thus:
$$\deg_tx_{r,j,1}\leq \deg_tS_{r-1}(N)+q^n <\deg_tS_r(N).$$
Thus we get:
$$\deg_t\eta_r(t)=\deg_tS_r(N).$$
Now, observe that:
$$\frac{(-1)^{\frac{\ell_q(N)-1}{q-1}}\theta^{-r}B_N(t, \theta)}{\prod_{j\geq 1} (1- \theta^{1-q^j})} \equiv \varepsilon_N\pmod{\frac {1}{\theta^{r+1}}\mathbb F_p[t][[\frac{1}{\theta}]]}.$$
We easily deduce that:
$$\theta^{-r}B_N(t, \theta)=\sum_{l=0}^r\beta_l(t) \theta^{-l} , \beta_l(t)\in \mathbb F_p[t], \deg_t\beta_l(t)=\deg_tS_l(N), l=0, \ldots, r.$$
Observe that, by Proposition \ref{PropositionATR7}, we have $\deg_tS_r(N)>N(r-1),$ and it is obvious that $\deg_tS_r(N)<rN.$  Now, we get assertion 1) and 2) by the same reasoning as that used in the proof of Theorem \ref{TheoremATR1}.
\end{proof}
\begin{corollary}\label{CorollaryATR1}
We assume that $N$ is $q$-minimal, $N\equiv 1\pmod{q-1}.$  We also assume that $r\geq 1.$ Then, $B_N(t, \theta)$ has at most one zero in $\{ \theta^{q^i}, i\in \mathbb Z\}.$
\end{corollary}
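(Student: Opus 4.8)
The plan is to prove the slightly stronger assertion that $B_N(t,\theta)$ has \emph{no} zero lying in $\{\theta^{q^i}:i\in\mathbb Z\}$ at all, which a fortiori gives the corollary. The first step is a reduction to the companion polynomial $B_N(\theta,t)$. Iterating the Frobenius identity $B_N(t^p,\theta^p)=B_N(t,\theta)^p$ of Lemma~\ref{LemmaATR5} $ei$ times (where $q=p^e$) gives $B_N(t^{q^i},\theta^{q^i})=B_N(t,\theta)^{q^i}$ for every $i\ge 1$, and since $\mathbb C_\infty$ is perfect this holds also with $q^{-i}$-th roots in place of $q^i$-th powers. Specialising the two variables to $\theta$ and to $\theta^{q^{-i}}$ (the $q^{|i|}$-th root of $\theta$ in $\mathbb C_\infty$ if $i>0$, a power of $\theta$ if $i\le 0$) yields, for every $i\in\mathbb Z$, that $\theta^{q^i}$ is a zero of $B_N(t,\theta)$ if and only if $\theta^{q^{-i}}$ is a root of the one-variable polynomial $B_N(\theta,t)\in\mathbb F_p[\theta][t]\subset K_\infty[t]$. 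Hence it suffices to prove that no $\theta^{q^m}$ ($m\in\mathbb Z$) is a root of $B_N(\theta,t)$.

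To that end I would compute the Newton polygon of $B_N(\theta,t)$, viewed as a polynomial in $t$, with respect to $v_\infty$. Writing $\mathbb B_s=\sum_{l=0}^rB_{l,s}\theta^{r-l}$, the coefficient of $t^{r-l}$ in $B_N(\theta,t)$ is $B_{l,s}(\theta^{q^{e_1}},\dots,\theta^{q^{e_s}})$, while the coefficient of $\theta^{r-l}$ in $B_N(t,\theta)$ is $\beta_l(t)=B_{l,s}(t^{q^{e_1}},\dots,t^{q^{e_s}})$; these two expressions are the same monomial substitution applied to $B_{l,s}$, so they share the same top-degree coefficient and therefore $\deg_\theta B_{l,s}(\theta^{q^{e_i}})=\deg_t\beta_l(t)$, which by the proof of Theorem~\ref{TheoremATR2} equals $\deg_tS_l(N)$ for $l=0,\dots,r$. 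Thus the Newton polygon of $B_N(\theta,t)$ has the $r+1$ points $(r-l,-\deg_tS_l(N))$ as vertices, and by the strict concavity of $l\mapsto\deg_tS_l(N)$ (Proposition~\ref{PropositionATR7}, using that $U_{r+1}(N)\neq\emptyset$ since $N$ is $q$-minimal) these are exactly its vertices; its $r$ sides have horizontal length one and slopes $\delta_l:=\deg_tS_l(N)-\deg_tS_{l-1}(N)$, $l=1,\dots,r$, so the $r$ roots of $B_N(\theta,t)$ have $v_\infty$-valuations precisely $-\delta_1,\dots,-\delta_r$. Following Sheats (as in the proof of Proposition~\ref{PropositionATR7}), $\delta_l=m_l+m_{l+1}+\cdots+m_{r+1}$, where $(m_0,\dots,m_{r+1})$ is the greedy element of $U_{r+1}(N)$.

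It then remains to verify that $\delta_l$ is never a power of $q$ for $1\le l\le r$. This finishes the proof, because $v_\infty(\theta^{q^m})=-q^m$ is either a negative integer power of $q$ (if $m\ge 0$) or not an integer (if $m<0$), whereas $-\delta_l$ is a negative integer that is not a power of $q$. Indeed, since the equality $\sum_{n=0}^{r+1}m_n=N$ involves no carry over in base $p$ --- hence none in base $q$, a base-$q$ digit being a block of $e$ base-$p$ digits --- no partial sum carries either, so $\ell_q(\delta_l)=\sum_{n=l}^{r+1}\ell_q(m_n)$. By Lemma~\ref{LemmaATR8} one has $m_{r+1}=q^n$, so $\ell_q(m_{r+1})=1$, and for $n=l,\dots,r$ each $m_n$ is a positive multiple of $q-1$, so $\ell_q(m_n)\ge 1$. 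Since $l\le r$, the sum $\sum_{n=l}^{r+1}\ell_q(m_n)$ contains at least the two terms arising from $m_r$ and $m_{r+1}$, whence $\ell_q(\delta_l)\ge 2$ and $\delta_l$ cannot be a power of $q$.

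The main obstacle is the second paragraph: one must carefully transfer the data obtained in the proof of Theorem~\ref{TheoremATR2} about $B_N(t,\theta)$ regarded as a polynomial in $1/\theta$ into the Newton polygon of the \emph{other} polynomial $B_N(\theta,t)$ in the variable $t$, and identify its slopes with the tail sums $m_l+\cdots+m_{r+1}$ of the greedy element of $U_{r+1}(N)$. Once this dictionary is set up, the Frobenius reduction and the concluding digit-sum computation are routine.
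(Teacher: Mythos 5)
Your first two paragraphs are sound and essentially reproduce data the paper already extracts in the proof of Theorem \ref{TheoremATR2}: the Frobenius reduction $B_N(\theta^{q^i},\theta)=B_N(\theta,\theta^{q^{-i}})^{q^i}$ is correct, and the identification of the Newton polygon of $B_N(\theta,t)$, with one simple root of valuation $-\delta_l$ for each $l=1,\dots,r$ where $\delta_l=\deg_tS_l(N)-\deg_tS_{l-1}(N)$, is exactly what Theorem \ref{TheoremATR2}(2) rests on. The problem is the third paragraph, i.e.\ the one step you would actually need beyond the paper.

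The identity $\delta_l=m_l+m_{l+1}+\cdots+m_{r+1}$, with $(m_0,\dots,m_{r+1})$ the greedy element of $U_{r+1}(N)$, is not ``following Sheats (as in the proof of Proposition \ref{PropositionATR7})'': that proposition only yields the one-sided inequality $\deg_tS_d(N)\geq\deg_tS_d(N-m_{d+1})+dm_{d+1}$, hence $\delta_r>m_{r+1}$. Your identity is equivalent to saying that the greedy element of $U_j(N)$ is obtained from the greedy element of $U_{r+1}(N)$ by merging the trailing coordinates; the paper establishes this only for $q=p$ (Lemma \ref{LemmaATR7} gives the greedy element in closed form there, and Proposition \ref{PropositionATR5} exploits it), and Section 4 is explicitly restricted to $q=p$. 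For general $q$ the greedy element of $U_d(N)$ maximizes its own last coordinate subject to the mod $q-1$ constraints, and there is no argument that this maximum equals $m_d+m_{d+1}$; one only gets $n_d\geq m_d+m_{d+1}$ (as in the proof of Lemma \ref{LemmaATR8}(3)). This is precisely the point at which the paper loses control: Lemma \ref{LemmaATR8}(3) pins down $\delta_1,\dots,\delta_{r-1}$ (they exceed $pq^k$, hence cannot be powers of $q$ below $q^{k+1}$), but for the last slope one only knows $q^n<\delta_r\leq N<q^{k+1}$, so $\delta_r$ could a priori equal $q^i$ for some $n<i\leq k$. That is exactly why the paper's statement is ``at most one zero'' rather than ``no zero'' (and why, after Problem \ref{Conjecture}, the authors say they do not know whether a trivial zero can occur); the paper's proof then assumes such a zero exists, deduces $\delta_r=q^i$, factors $\theta^{-r}B_N(t,\theta)=(\frac{t^{q^i}}{\theta}-1)F(t)$, and checks that $F$ contributes no further such zeros. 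Your argument, as written, proves a strictly stronger statement than the paper claims, from an identity the paper neither proves nor uses; unless you can supply an independent proof that $\ell_q(\delta_r)\geq 2$ (or of the merging property of greedy elements for general $q$), the proof does not go through.
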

\begin{proof} Let's assume that $B_N(t, \theta)$ has a zero $\alpha\in \{ \theta^{q^i}, i\in \mathbb Z\}.$ Let $n$ be the integer introduced in the proof of Theorem \ref{TheoremATR2}. Then:
$$\alpha=\theta^{q^{-i}} , k\geq  i>n,$$
where $q^k\leq N< q^{k+1}.$
 Thus $n<k,$ and therefore, by Lemma \ref{LemmaATR8}, we must have:
 $$\deg_tS_r(N)-\deg_tS_{r-1}(N)= q^i.$$
 By the proof of Theorem \ref{TheoremATR2}, we have:
 $$\theta^{-r}B_N(t,\theta)= \left(\frac{t^{q^i}}{\theta}-1\right)F(t),$$
 where $F(t)=\sum_{l=0}^{r-1} \nu_l(t) \theta^{-l}, \nu_l(t)\in \mathbb F_p[t], \deg_t\nu_l(t)=\deg_tS_l(N), l=0, \ldots , r-1.$ Furthermore $\nu_0(t)=-1.$This implies that  the zeros of $F(t)$ are not in $\{ \theta^{q^i}, i\in \mathbb Z\}.$  \end{proof}
  
\begin{corollary}\label{CorollaryATR2}${}$\par
\noindent 1) The polynomial $\mathbb  B_s$ is square-free, i.e. $\mathbb B_s$ is not divisible by the square of a non-trivial polynomial in $\mathbb F_q[t_1, \ldots, t_s, \theta].$\par
\noindent 2) For all $l, n\in \mathbb N,$ $\mathbb B_s$ is relatively prime to $(t_1^{q^{l}}-\theta^{q^{n}})$ (observe that $\mathbb B_s$ is a symmetric polynomial in  $t_1, \ldots, t_s$). \par
\noindent 3) For all monic irreducible prime $P$ of $A,$ $\mathbb B_s$ is relatively prime to $P(t_1)\cdots P(t_s)-P.$
\end{corollary}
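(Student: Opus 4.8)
The plan is to deduce all three assertions from the results already established, in the $q$-minimal case, for the one-variable polynomials $B_N$: namely Theorem \ref{TheoremATR2} and Lemma \ref{LemmaATR6}. One may assume $s\geq 2q-1$ (so $r\geq 1$), since for $s=q$ one has $\mathbb B_s=1$. The device I would use throughout is the following: for any $s$ pairwise distinct integers $0\leq e_1<\dots<e_s$, the integer $N=\sum_{i=1}^s q^{e_i}$ has all base-$q$ digits in $\{0,1\}$, hence $\ell_q(N)=s$, so $N\equiv 1\pmod{q-1}$ and $r=\frac{\ell_q(N)-q}{q-1}\geq 1$; and a short computation (multiply $N$ by $p^j$ and reduce $j$ modulo $[\bF_q:\bF_p]$) shows $N$ is $q$-minimal. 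For such $N$ one has $B_N(t,\theta)=\mathbb B_s(t^{q^{e_1}},\dots,t^{q^{e_s}},\theta)$ and $B_N(\theta,t)=\mathbb B_s(\theta^{q^{e_1}},\dots,\theta^{q^{e_s}},t)$, so the quoted results apply to it. Finally, since $\mathbb B_s$ is monic of degree $r$ in $\theta$, every non-unit irreducible factor of $\mathbb B_s$ in $\bF_q[t_1,\dots,t_s,\theta]$ must involve $\theta$ (a factor lying in $\bF_q[t_1,\dots,t_s]$ would divide the leading coefficient $1$) and, after rescaling by an element of $\bF_q^\times$, may be taken monic in $\theta$ of $\theta$-degree $\geq 1$; I will call such a factor \emph{normalized}, and note that specializing $t_i\mapsto\theta^{q^{e_i}}$ in a normalized factor produces a monic polynomial in $t$ of degree $\geq 1$, in particular a non-constant one.

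For assertions 1) and 3) I would specialize $t_i\mapsto\theta^{q^{e_i}}$ and use that, by the second assertion of Theorem \ref{TheoremATR2}, $B_N(\theta,t)$ has only simple roots, all in $\bF_p((\frac{1}{\theta}))$. If $\mathbb B_s=G^2H$ with $G$ a normalized irreducible factor, then $G(\theta^{q^{e_1}},\dots,\theta^{q^{e_s}},t)$ is non-constant and its square divides $B_N(\theta,t)$, contradicting simplicity of the roots; this gives 1). For 3), fix a monic irreducible $P\in A$ of degree $d$ and suppose $G$ is a normalized common irreducible factor of $\mathbb B_s$ and $P(t_1)\cdots P(t_s)-P(\theta)$. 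Since $P$ has coefficients in $\bF_q$ we have $P(\theta^{q^{e_i}})=P(\theta)^{q^{e_i}}$, so under $t_i\mapsto\theta^{q^{e_i}}$ the polynomial $P(t_1)\cdots P(t_s)-P(\theta)$ collapses to $P(\theta)^N-P(t)$; hence the non-constant $G(\theta^{q^{e_i}},t)$ divides both $B_N(\theta,t)$ and $P(\theta)^N-P(t)$, so they share a root $\gamma$, which lies in $\bF_p((\frac{1}{\theta}))$ by Theorem \ref{TheoremATR2}. Then $v_\infty(\gamma)\in\bZ$, whereas $P(\gamma)^N=P(\theta)$ forces $v_\infty(P(\gamma))=-d/N<0$, hence $v_\infty(\gamma)<0$ and $d\,v_\infty(\gamma)=-d/N$, i.e.\ $v_\infty(\gamma)=-1/N\notin\bZ$ since $N\geq 2$ --- a contradiction, proving 3).

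For assertion 2), note that $t_1^{q^l}-\theta^{q^n}$ is a $p$-power of $t_1-\theta^{q^{n-l}}$ when $n\geq l$ and of $t_1^{q^{l-n}}-\theta$ when $l\geq n$, so it has the same prime factors as one of these, each of which is linear in a single variable and hence irreducible; thus it suffices to show neither $t_1-\theta^{q^m}$ (any $m\geq 0$) nor $t_1^{q^m}-\theta$ (any $m\geq 1$) divides $\mathbb B_s$. For $t_1-\theta^{q^m}$: by symmetry of $\mathbb B_s$ it divides $\mathbb B_s$ iff $\mathbb B_s(t_2,\dots,t_s,\theta^{q^m},\theta)=0$, and specializing $t_i\mapsto\theta^{q^{e_i}}$ for $2\leq i\leq s$ with the $e_i$ distinct and different from $m$ turns this into $B_N(\theta,\theta)$ with $N=q^m+\sum_{i=2}^s q^{e_i}$, which is nonzero by the first assertion of Theorem \ref{TheoremATR2}. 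For $t_1^{q^m}-\theta$ with $m\geq 1$: it divides $\mathbb B_s$ iff $\mathbb B_s(t_1,\dots,t_s,t_1^{q^m})=0$ in $\bF_q[t_1,\dots,t_s]$; I would choose $0=e_1<\dots<e_s$ with $e_s\geq m$, so that $N=\sum_{i}q^{e_i}$ satisfies $q^m<N$, and specialize $t_i\mapsto t^{q^{e_i}}$ to obtain $B_N(t,t^{q^m})$. If this vanished, then $\theta-t^{q^m}$ would divide $B_N(t,\theta)$ in $\bF_q[t,\theta]$, so $\theta^{1/q^m}\in\bC_\infty$ would be among the roots in $t$ of $B_N(t,\theta)$; but $v_\infty(\theta^{1/q^m})=-q^{-m}>-1/N$, contradicting Lemma \ref{LemmaATR6}. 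This completes 2).

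The real work here is not in the estimates, which are light, but in choosing the direction of specialization: into $B_N(\theta,t)$ for 1) and 3), so as to exploit the sharp root-location statement of Theorem \ref{TheoremATR2}, and into $B_N(t,\theta)$ together with the valuation bound of Lemma \ref{LemmaATR6} for the $t_1^{q^m}-\theta$ case of 2), while always arranging the auxiliary $N$ to be $q$-minimal of digit-sum $s$. The point I expect to need the most care is verifying that the collapse $P(t_1)\cdots P(t_s)-P(\theta)\mapsto P(\theta)^N-P(t)$ genuinely occurs (it rests on $P$ having $\bF_q$-coefficients) and that every specialized $N$ falls within the scope of Theorem \ref{TheoremATR2}.
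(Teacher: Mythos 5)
Your overall strategy --- reduce to a $q$-minimal $N=\sum_i q^{e_i}$ with distinct exponents and invoke Theorem \ref{TheoremATR2} --- is exactly the paper's, and your treatment of assertion 1) and of the factors $t_1-\theta^{q^m}$ in assertion 2) is correct (the paper handles 1) via square-freeness of $B_N(t,\theta)$ in $t$ rather than simplicity of the roots of $B_N(\theta,t)$, but both follow from Theorem \ref{TheoremATR2}). However, two of your three valuation computations go the wrong way, and in both cases the claimed contradiction evaporates.

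In the $t_1^{q^m}-\theta$ case of 2), you choose $e_1=0$ and arrange $N>q^m$, so the putative root $\theta^{1/q^m}$ of $B_N(t,\theta)$ has $v_\infty(\theta^{1/q^m})=-q^{-m}<-1/N$, not $>-1/N$ as you assert; this is perfectly compatible with the bound $v_\infty(x)\leq -1/N$ established in the proof of Lemma \ref{LemmaATR6}, so there is no contradiction. The substitution can be repaired by putting the \emph{largest} exponent on $t_1$: take $e_1=M$ with all other exponents $<M$, so that $N<q^{M+1}\leq q^{M+m}$ and the resulting root $\theta^{1/q^{M+m}}$ genuinely has valuation $>-1/N$. (The paper instead extracts from the proofs of Theorems \ref{TheoremATR1} and \ref{TheoremATR2} that the roots of $B_N(t,\theta)$ have valuations $-1/\deg_t P_j$ with $\deg_t P_j$ not a power of $p$, hence never lie in $\{\theta^{q^i},\,i\in\mathbb Z\}$.)

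In 3) the error is in the equation satisfied by the common root. Under your specialization $t_i\mapsto\theta^{q^{e_i}}$, $\theta\mapsto t$, the polynomial $P(t_1)\cdots P(t_s)-P$ does become $P(\theta)^N-P(t)$, but a root $\gamma$ of this satisfies $P(\gamma)=P(\theta)^N$, not $P(\gamma)^N=P(\theta)$. Hence $v_\infty(P(\gamma))=-dN$ and $v_\infty(\gamma)=-N\in\mathbb Z$, which is entirely consistent with $\gamma\in\mathbb F_p((\frac{1}{\theta}))$: no contradiction. The paper avoids this by specializing in the other direction, $t_i\mapsto t^{q^{e_i}}$ with $\theta$ fixed, so that the second polynomial becomes $P(t)^N-P(\theta)$, whose roots have valuation exactly $-1/N$, while the Newton polygon of $B_N(t,\theta)$ (slopes $1/(\deg_tS_l(N)-\deg_tS_{l-1}(N))$ with every increment $<N$) shows that no root of $B_N(t,\theta)$ has valuation $-1/N$. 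If you insist on working with $B_N(\theta,t)$, you would instead need to know that none of its roots has valuation $-N$, which again requires the Newton polygon data from the proof of Theorem \ref{TheoremATR2}, not merely its statement.
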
 
\begin{proof} Let $N=q^{e_1}+\ldots +q^{e_s},$ $0\leq e_1<e_2<\ldots < e_s.$ Then:
$$B_N(t, \theta)=\mathbb B_s\mid_{t_i=t^{q^{e_i}}}.$$
We observe that $N$ is $q$-minimal. Thus we can apply Theorem \ref{TheoremATR2}. This Theorem and its proof imply that $B_N(t, \theta)$ is square-free and has no roots in $\{ \theta^{q^i}, i\in \mathbb Z\}.$ This proves 1) and 2).\par
\noindent Let $P$ be a monic irreducible polynomial in $A.$ Suppose that that $P(t_1)\cdots P(t_s)-P$ and $\mathbb B_s$ are not relatively prime.Then $P(t)^N-P$ and $B_N(t, \theta)$ are not relatively prime. But, by the proof of Theorem \ref{TheoremATR2}, if $\alpha \in \mathbb C_\infty$ is a root  of $B_N(t, \theta),$ then:
$$v_\infty(\alpha)>\frac{-1}{N}.$$
Now, observe that if $\beta\in \mathbb C_\infty$ is a root of $P(t)^N-P,$ then $v_\infty(\beta)=\frac{-1}{N}.$ This leads to a contradiction.
\end{proof}
Note that assertion 1) of the above Corollary gives the cyclicity result implied by    \cite{ATR}, Theorem 3.17, but  by a completely different method.
\section{An example}
We study here an example of an $N$ which is not $q$-minimal, so that our method does not apply. We choose $q=4$, and $N= 682 = 2 + 2\times 4 + 2\times4^2+2\times 4^3 + 2\times 4^4$. We get $l_q(N)=10 = 3q-2$ so that $\deg_\theta(B_N(t, \theta))=2$. Moreover, $l_q(pN)=5$ so that $N$ is not $q$-minimal.
By using the table of section \ref{tables}, we get : 
\begin{align*}
B_N(t, \theta) =  \theta^2 & +\theta \left( t^{10} + t^{34} +t^{40} +t^{130} + t^{136} + t^{160} + t^{514} + t^{520}+t^{544} + t^{640}\right) \\
& + \left(t^{170} +t^{554} + t^{650} + t^{674} + t^{680}\right).
\end{align*}
The Newton polygon of $B_N(t, \theta)$ has then the end points $(0,-2), (640,-1), (680, 0)$. We deduce that $B_N(t, \theta)$ has $640$ distinct zeroes of valuation $-\frac 1 {640}$ and $40$ distinct zeros of valuation $-\frac 1 {40}$.
Similarly, $B_N(\theta, t)$ has two zeros of respective valuations $-40$ and $-640$.
In particular, we still have that $B_N(t,\theta)$ has no zero of the form $\theta^{q^i}, i\in \mathbb Z$, and an affirmative answer to Problem \ref{Conjecture}.

\renewcommand\thesection{\Alph{section}}

\renewcommand{\thesection}{A}

\section{Appendix: The digit principle and derivatives of certain $L$-series, by B. Angl\`es, D. Goss, F. Pellarin \and F. Tavares Ribeiro}

We keep the notation of the article.

Let $N$ be a positive integer. We consider its base-$q$ expansion $N=\sum_{i=0}^r n_i q^i,$ with $n_i\in \{ 0, \ldots, q-1\}.$ We recall that $\ell_q(N)=\sum_{i=0}^r n_i$ and the definition of the Carlitz factorial :
$$\Pi(N)= \prod_{i\geq 0} D_i^{n_i}\in A^+,$$
where  $[k]=\theta^{q^k}-\theta$ if $k>0$ and $D_j=[j][j-1]^q\cdots[1]^{q^{j-1}}$ for $j>0$.

It is easy to see (the details are in \S \ref{andersonthakur}, \ref{onedigit} and 
\ref{severaldigits}) that,
if we denote by $a'$ the derivative $\frac{d}{d\theta}a$ of $a\in A$ with respect to $\theta$,
the series $$\sum_{k\geq 1}\sum_{a\in A_{+,k}}\frac{a'^N}{a}$$ 
converges in $K_{\infty}$ to a limit that we denote by $\delta_N$. 
In particular, if $n=q^j$ with $j>0$, we will see
(Proposition \ref{proposition2}) that 
$$\delta_1=-\sum_{k\geq1} \frac{1}{[k]} \ \text{ and } \delta_{q^j}=\frac{D_j}{[j]}\widetilde{\pi}^{1-q^j}.$$ 
Our aim  is to prove the following:
\begin{theorem}\label{theorem1}
If $N\geq q$ is such that $N\equiv 1\pmod{q-1}$ and $\ell_q(N)\geq q$, then
$$\frac{\delta_N}{\widetilde{\pi}}=\beta_N\frac{\Pi(N)}{{\Pi([\frac{N}{q}])}^q}\prod_{k= 1}^r\left(\frac{\delta_{q^k}}{\widetilde{\pi}}\right)^{n_k},$$
where for $x\in \mathbb R, $ $[x]$ denotes the integer part of $x,$ and where $\beta_N=(-1)^{\frac{s-1}{q-1}}B_N(\theta, \theta).$
\end{theorem}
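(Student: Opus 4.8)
The plan is to re‑read $\delta_N$ as an $s$‑fold iterated partial derivative of the multivariable $L$‑series $L_s$, evaluated at the point attached to the base‑$q$ expansion of $N$, and then feed this into the factorization of $L_s$ through $\mathbb B_s$ and the $\omega(t_i)$'s. First I would record the elementary identity $(a')^{q^m}=a'(\theta^{q^m})$ for $a\in A$ (here $a'=\frac{d}{d\theta}a$ and $a'(\,\cdot\,)$ denotes the associated polynomial), which holds because on $A$ the $q^m$‑th power map is the substitution $\theta\mapsto\theta^{q^m}$. Writing $N=\sum_{n=1}^{s}q^{e_n}$ with $s=\ell_q(N)$ and $e_1\le\cdots\le e_s$, this yields $(a')^N=\prod_{n=1}^{s}a'(\theta^{q^{e_n}})$, hence
$$\delta_N=\sum_{k\ge1}\sum_{a\in A_{+,k}}\frac{\prod_{n=1}^{s}a'(\theta^{q^{e_n}})}{a}=\Big[\frac{\partial^{s}}{\partial t_1\cdots\partial t_s}\,L_s\Big]_{t_1=\theta^{q^{e_1}},\,\dots,\,t_s=\theta^{q^{e_s}}},$$
with $L_s=\sum_{k\ge0}\sum_{a\in A_{+,k}}\frac{a(t_1)\cdots a(t_s)}{a}$. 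The analytic content of this identity — that the series for $L_s$ and for its $s$‑th mixed $t$‑derivative converge at the $t_n=\theta^{q^{e_n}}$, which lie far outside the unit polydisc, and that differentiation may be performed term by term there — is exactly what the estimates in the proof of Lemma \ref{LemmaATR1} provide: the super‑exponential gain $q^{k-O(1)}$ in $v_\infty$ of the degree‑$k$ part of $L_s$ dwarfs the loss of at most $kN$ coming from $v_\infty(\theta^{q^{e_n}})=-q^{e_n}$. I would isolate this in a preliminary lemma, as it is the most delicate point.

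Next, rearranging the definition of $\mathbb B_s$ gives $L_s=(-1)^{\frac{s-1}{q-1}}\widetilde\pi\,\mathbb B_s\prod_{i=1}^{s}\omega(t_i)^{-1}$, an identity of analytic functions valid wherever the $\omega(t_i)$ are, since $\mathbb B_s\in\mathbb F_q[t_1,\dots,t_s,\theta]$ and $\omega(t_i)^{-1}=\lambda_\theta^{-1}\prod_{j\ge0}(1-t_i/\theta^{q^j})$ is entire. Applying the Leibniz rule to $\partial_{t_1}\cdots\partial_{t_s}$ of the product $\mathbb B_s\prod_i\omega(t_i)^{-1}$ and evaluating at $t_n=\theta^{q^{e_n}}$: each $\theta^{q^{e_i}}$ is a simple pole of $\omega(t_i)$, hence a simple zero of $\omega(t_i)^{-1}$, and $\partial_{t_i}$ acts nontrivially only on $\mathbb B_s$ or on $\omega(t_i)^{-1}$; so any Leibniz term in which some $\partial_{t_i}$ fails to land on $\omega(t_i)^{-1}$ keeps that vanishing factor undifferentiated and dies after evaluation. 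The only surviving term is $\mathbb B_s\cdot\prod_{i=1}^{s}\partial_{t_i}[\omega(t_i)^{-1}]$, and this is insensitive to repetitions among the $e_i$. Hence
$$\delta_N=(-1)^{\frac{s-1}{q-1}}\,\widetilde\pi\,\big(\mathbb B_s|_{t_i=\theta^{q^{e_i}}}\big)\prod_{i=1}^{s}\Big(\frac{d}{dt}[\omega(t)^{-1}]\Big)_{t=\theta^{q^{e_i}}}.$$

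Now evaluate the two pieces. By the definition $B_N(t,\theta)=\mathbb B_s|_{t_i=t^{q^{e_i}}}$ we get $\mathbb B_s|_{t_i=\theta^{q^{e_i}}}=B_N(\theta,\theta)$, which is nonzero (for $\ell_q(N)=q$ because then $\mathbb B_s=1$, and for $\ell_q(N)\ge 2q-1$ by Corollary \ref{Corollary1}), so that $\beta_N=(-1)^{\frac{s-1}{q-1}}\mathbb B_s|_{t_i=\theta^{q^{e_i}}}$ enters; and from Pellarin's residue formula $(t-\theta^{q^l})\omega(t)|_{t=\theta^{q^l}}=-\widetilde\pi^{q^l}/D_l$ one reads off $\frac{d}{dt}[\omega(t)^{-1}]|_{t=\theta^{q^l}}=-D_l/\widetilde\pi^{q^l}$. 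Since the digit $l$ occurs $n_l$ times among the $e_i$, the product collapses to $\prod_{l\ge0}(-D_l/\widetilde\pi^{q^l})^{n_l}=(-1)^{s}\Pi(N)\widetilde\pi^{-N}$, using $\Pi(N)=\prod_l D_l^{n_l}$ and $\sum_l n_lq^l=N$. This exhibits $\delta_N/\widetilde\pi$ as an explicit product of $\beta_N$, $\Pi(N)$, a sign, and a power of $\widetilde\pi$. To rewrite it in the stated digit form I would then invoke $\delta_{q^k}/\widetilde\pi=(D_k/[k])\widetilde\pi^{-q^k}$ (Proposition \ref{proposition2}) and the elementary identity $\Pi(N)/\Pi([N/q])^{q}=\prod_{i\ge1}[i]^{n_i}$, which is immediate from $D_i=[i]D_{i-1}^{q}$ together with $[N/q]=\sum_{i\ge1}n_iq^{i-1}$: the product $\prod_{k\ge1}(\delta_{q^k}/\widetilde\pi)^{n_k}$ supplies $\Pi(N)$ up to exactly the factor $\prod_{i\ge1}[i]^{n_i}$ that $\Pi(N)/\Pi([N/q])^q$ restores, and a final tally of the powers of $\widetilde\pi$ and the signs gives the claimed formula.

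The hard part will be the first reduction — proving rigorously that $L_s$, and its $s$‑th mixed $t$‑derivative, can legitimately be evaluated at the points $t_n=\theta^{q^{e_n}}$ and that this derivative commutes with the double summation there. Once that is in hand, everything else is a controlled bookkeeping of residues at the $\theta^{q^l}$, of signs, and of powers of $\widetilde\pi$.
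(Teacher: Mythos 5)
Your route is, at bottom, the paper's own: both arguments reduce to evaluating the factorization $L=\pm\widetilde\pi\,\mathbb B_s\prod_i\omega(t_i)^{-1}$ at the trivial zeros, invoking the residue formula $(t-\theta^{q^l})\omega(t)\mid_{t=\theta^{q^l}}=-\widetilde\pi^{q^l}/D_l$ of Corollary \ref{corollary1}, and then rewriting $D_k$ via Proposition \ref{proposition2} and the identity $\Pi(N)/\Pi([\frac{N}{q}])^q=\prod_{k\geq 1}[k]^{n_k}$. The only real difference is that the paper stays in one variable: it notes that $L_N(t)$ vanishes to order at least $N$ at $t=\theta$ and reads off $\delta_N$ as the corresponding leading Taylor coefficient, whereas you separate the digits into the variables of $L_s$ and kill all but one Leibniz term using the simple zeros of $\omega(t_i)^{-1}$. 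Your version is cleaner precisely where the one-variable argument is delicate (one cannot literally differentiate $N$ times in characteristic $p$; the denominator in the paper's display should be $\prod_i(t^{q^i}-\theta^{q^i})^{n_i}=(t-\theta)^N$ rather than $\prod_i(t-\theta^{q^i})^{n_i}$), and the convergence/interchange issue you isolate is the same one either version must address, with the estimates of Lemma \ref{LemmaATR1} indeed sufficing.

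There is, however, one point you must not defer: the ``final tally of the powers of $\widetilde\pi$ and the signs'' does not give the formula as stated. What your computation correctly establishes is
$$\delta_N=(-1)^{\frac{s-1}{q-1}}(-1)^{s}\,\widetilde\pi\,B_N(\theta,\theta)\,\Pi(N)\,\widetilde\pi^{-N},$$
and substituting $D_k=[k]\,\delta_{q^k}\,\widetilde\pi^{q^k-1}$ for $k\geq 1$, $D_0=1$, and $\Pi(N)/\Pi([\frac{N}{q}])^q=\prod_{k\geq1}[k]^{n_k}$ yields
$$\frac{\delta_N}{\widetilde\pi}=(-1)^{\ell_q(N)}\,\widetilde\pi^{-n_0}\,\beta_N\,\frac{\Pi(N)}{\Pi([\frac{N}{q}])^q}\prod_{k=1}^{r}\left(\frac{\delta_{q^k}}{\widetilde\pi}\right)^{n_k},$$
i.e.\ the asserted identity only up to the factor $(-1)^{\ell_q(N)}\widetilde\pi^{-n_0}$. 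This discrepancy is not introduced by your method: the paper's own intermediate identity $\beta_N=\delta_N\prod_i(-\widetilde\pi^{q^i}/D_i)^{n_i}\,\widetilde\pi^{-1}$ coincides with yours and leads to the same extra factor, so the issue lies in the normalization of the printed statement. As a concrete check, for $q=3$ and $N=5=2+3$ one finds $\delta_5=[1]\,\widetilde\pi^{-4}$ (of valuation $3$, matching a direct expansion $\delta_5=\theta^{-3}+\cdots$), whereas $\widetilde\pi$ times the stated right-hand side is $-[1]\,\widetilde\pi^{-2}$, of valuation $0$. So carry the tally out explicitly and record the corrected constant rather than asserting that it matches.
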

Our Theorem \ref{theorem1} can be viewed as a 
kind of {\em digit principle} for the values $\delta_j$ in the sense of  \cite{CON}.

The plan of this appendix is the following. In \S \ref{andersonthakur}, we 
recall the first properties of Anderson and Thakur function $\omega$. In 
\S \ref{onedigit} we discuss the one-digit case of our Theorem, while the general case is discussed in \S \ref{severaldigits}. 

\subsection{The Anderson-Thakur  function}\label{andersonthakur}
Recall that $\bT_t$ denotes the Tate algebra over $\mathbb C_{\infty}$ in the variable $t$, $C: A\rightarrow A\{ \tau\}$ is the Carlitz module (\cite{GOS}, chapter 3), in other words, $C$ is the morphism of $\mathbb F_q$-algebras given by $C_{\theta}=\tau +\theta,$
and 
$$\exp_C=\sum_{i\geq 0}\frac{1}{D_i} \tau ^i \in \mathbb T_t\{ \{ \tau \}\}$$
is the Carlitz exponential. In particular, we have the following equality in $\mathbb T_t\{\{ \tau \}\}:$
$$\exp_C \theta= C_{\theta} \exp_C.$$
Let us choose a $(q-1)$-th root $\sqrt[q-1]{-\theta}$ of $-\theta$ in $\mathbb C_{\infty}$ and set:
$$\widetilde{\pi}=\theta\sqrt[q-1]{-\theta}  \prod_{j\geq 1} (1- \theta^{1-q^j})^{-1} \in \mathbb C_{\infty}^\times.$$
We recall the Anderson-Thakur function (\cite{AND&THA}, proof of Lemma 2.5.4):
$$\omega(t)= \sqrt[q-1]{-\theta}\prod_{j\geq 0}\left(1- \frac{t}{\theta^{q^j}}\right)^{-1}\in \mathbb T_t^\times.$$
To give an idea of how to compute $\exp_C(f)$ for certain $f$ in $\bT_t$,
we verify here that $$\exp_C\left(\frac{\widetilde{\pi}}{\theta-t}\right)=\sum_{j\geq 0} \frac{ \widetilde{\pi}^{q^j}}{D_j(\theta^{q^j}-t)}$$ is a well defined element 
of $\bT_t$. Indeed, for $j\geq 0:$
$$v_{\infty}\left(\frac{ \widetilde{\pi}^{q^j}}{D_j(\theta^{q^j}-t)}\right) = q^j \left(j+1-\frac{q}{q-1}\right).$$
Therefore $\sum_{j\geq 0} \frac{ \widetilde{\pi}^{q^j}}{D_j(\theta^{q^j}-t)}$ converges in $\mathbb T_t.$

We will need the following crucial result in the sequel:
\begin{proposition}\label{proposition1} We have the following equality in $\mathbb T_t:$
$$\omega(t)=\exp_C\left(\frac{\widetilde{\pi}}{\theta-t}\right).$$
\end{proposition}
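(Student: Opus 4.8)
The plan is to show that both sides of the claimed identity are the images under $\exp_C$ of the same element, or rather, to characterize $\omega(t)$ by a functional equation that $\exp_C(\widetilde\pi/(\theta-t))$ also satisfies, and then pin down the remaining constant. First I would record the basic functional equation of $\omega$: from the product expansion one checks directly that
$$
\tau(\omega(t)) = \omega(t)^q = (t-\theta)\,\omega(t),
$$
using $(\sqrt[q-1]{-\theta})^q = -\theta\sqrt[q-1]{-\theta}$ and the telescoping of the infinite product $\prod_{j\ge 0}(1-t/\theta^{q^j})^{-q}$ against $\prod_{j\ge 0}(1-t/\theta^{q^j})^{-1}$. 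Equivalently $C_\theta(\omega(t)) = \tau(\omega(t)) + \theta\,\omega(t) = t\,\omega(t)$, i.e. $\omega(t)$ is an eigenvector for the Carlitz action with eigenvalue $t$.

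Next I would set $g(t) := \exp_C\!\left(\frac{\widetilde\pi}{\theta-t}\right)$, which is a well-defined element of $\mathbb T_t$ by the valuation estimate already given in the excerpt ($v_\infty$ of the $j$-th term is $q^j(j+1-\frac{q}{q-1})\to\infty$). Using the intertwining property $\exp_C\,\theta = C_\theta\,\exp_C$ in $\mathbb T_t\{\{\tau\}\}$, and the fact that multiplication by $t$ commutes with $\exp_C$ (since $\exp_C$ has coefficients in $\mathbb C_\infty$, hence $\mathbb F_q[t]$-linear on $\mathbb T_t$), I compute
$$
C_\theta(g(t)) = \exp_C\!\left(\theta\cdot\frac{\widetilde\pi}{\theta-t}\right)
= \exp_C\!\left(t\cdot\frac{\widetilde\pi}{\theta-t} + \widetilde\pi\right)
= t\,g(t) + \exp_C(\widetilde\pi) = t\,g(t),
$$
because $\exp_C(\widetilde\pi)=0$ ($\widetilde\pi$ is the Carlitz period, a generator of the kernel lattice of $\exp_C$). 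Hence $g(t)$ also satisfies $\tau(g(t)) = (t-\theta)g(t)$.

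Therefore $h(t) := \omega(t)/g(t)$ satisfies $\tau(h(t)) = h(t)$, so $h(t)\in \mathbb F_q((1/t))$-ish — more precisely $h$ lies in the fixed field, which forces $h$ to be a ``constant'' in the appropriate sense (an element fixed by $\tau$ in the relevant completed field is in $\mathbb F_q(t)$, and being a unit of $\mathbb T_t$ with no zeros or poles in the disc, a scalar in $\mathbb C_\infty$ fixed by $q$-th power, i.e.\ in $\mathbb F_q$). To identify this scalar I would compare the two sides near $t=\theta$, or compare leading behaviour: write $\omega(t) = \frac{\widetilde\pi/\theta}{1-t/\theta}\cdot(\text{unit}\to 1)$ type expansion, and from $g(t)$ the dominant term is $\frac{\widetilde\pi}{\theta-t}$ as $t\to\theta$ (the terms $j\ge 1$ are regular there and vanish in the appropriate limit of the residue). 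Matching residues at $t=\theta$ gives $\operatorname*{Res}_{t=\theta}\omega(t) = -\widetilde\pi/\theta\cdot$(something) against $\operatorname*{Res}_{t=\theta} g(t) = -\widetilde\pi$; carrying out this comparison, using $\widetilde\pi = \theta\sqrt[q-1]{-\theta}\prod_{j\ge1}(1-\theta^{1-q^j})^{-1}$ and $\omega(t)(1-t/\theta)\big|_{t=\theta} = \sqrt[q-1]{-\theta}\prod_{j\ge 1}(1-\theta^{1-q^j})^{-1} = \widetilde\pi/\theta$, shows $h\equiv 1$.

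The main obstacle is the last step: rigorously justifying that a $\tau$-invariant element of $\mathbb T_t$ which is a unit must be a constant in $\mathbb F_q$, and then evaluating the normalizing constant cleanly. The cleanest route is probably to avoid residues and instead evaluate $h$ by noting $h\in\mathbb F_q$ and then computing $(1-t/\theta)\omega(t)\big|_{t=\theta}$ versus $(1-t/\theta)g(t)\big|_{t=\theta} = \widetilde\pi/\theta$ directly from the series (only the $j=0$ term of $g$ contributes at $t=\theta$ after multiplying by $(\theta-t)$, giving $\widetilde\pi/D_0 = \widetilde\pi$, so $(\theta-t)g(t)\big|_{t=\theta}=\widetilde\pi$; and $(\theta-t)\omega(t)\big|_{t=\theta} = \theta\sqrt[q-1]{-\theta}\prod_{j\ge1}(1-\theta^{1-q^j})^{-1} = \widetilde\pi$), whence $h=1$ and the identity follows.
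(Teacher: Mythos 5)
Your proposal is correct and follows essentially the same route as the paper: establish $\tau(F)=(t-\theta)F$ for $F=\exp_C\bigl(\frac{\widetilde{\pi}}{\theta-t}\bigr)$ via $\exp_C(\widetilde{\pi})=0$, invoke $\{f\in\mathbb T_t,\ \tau(f)=f\}=\mathbb F_q[t]$ to see the ratio with $\omega$ is a polynomial over $\mathbb F_q$, and then normalize. The only (minor) divergence is the last step: the paper pins down the constant by comparing valuations of $F/\lambda_\theta-1$ and $\omega/\lambda_\theta-1$ inside $\mathbb T_t$, whereas you evaluate $(\theta-t)\cdot(\text{both sides})$ at $t=\theta$, which requires first continuing both functions beyond the unit disc (and you should take the quotient as $F/\omega$, since $\omega\in\mathbb T_t^\times$ is known while invertibility of $F$ is not yet).
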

\begin{proof} It is a consequence of the formulas established in \cite{PEL}. We give details for the convenience of the reader. Let us set $$F(t)=\exp_C\left(\frac{\widetilde{\pi}}{\theta-t}\right).$$ We observe that:
\begin{eqnarray*}
C_{\theta}(F(t))&=&\exp_C\left(\frac{\theta \widetilde{\pi}}{\theta-t}\right) =\exp_C\left(\frac{(\theta-t+t) \widetilde{\pi}}{\theta-t}\right)\\
&=&\exp_C\left(\widetilde{\pi}\right)+\exp_C\left(\frac{t \widetilde{\pi}}{\theta-t}\right)=t\exp_C\left(\frac{ \widetilde{\pi}}{\theta-t}\right)=tF(t).
\end{eqnarray*}
Therefore:
$$\tau (F(t))=(t-\theta) F(t).$$
But we also have:
$$\tau (\omega(t))=(t-\theta)\omega(t).$$
Finally, we get:
$$\tau \left(\frac{F(t)}{\omega(t)}\right)= \frac{F(t)}{\omega(t)}.$$
It is a simple and well-known consequence of a ultrametric variant of Weierstrass preparation Theorem that $\{ f\in \mathbb T_t, \tau (f)=f\} =\mathbb F_q[t].$ Since $\omega\in\bT_t^\times$, we have then:
$$\frac{F(t)}{\omega(t)}\in \mathbb F_q[t].$$
Now observe that
$$F(t)=\exp_C\left(\sum_{j\geq 0}\frac{\widetilde{\pi}}{\theta^{j+1}}t^j\right)=\sum_{j\geq0}\lambda_{\theta^{j+1}}t^j$$
and that, for all $j\geq 0$, $v_{\infty} (\lambda_{\theta^{j+1}})= j+1-\frac{q}{q-1}.$
This implies $v_{\infty}(\frac{F(t)}{\lambda_{\theta}}-1) >0.$
By the definition of $\omega(t),$ we also have
$v_{\infty} (\frac{\omega(t)}{\lambda_{\theta}}-1)>0.$
Thus:
$$v_{\infty} \left(\frac{F(t)}{\omega(t)}-1\right)>0.$$
Since $\frac{F(t)}{\omega(t)}\in \mathbb F_q[t],$ we get
$\omega(t)=F(t).$
\end{proof}
Notice that $\omega(t)$ defines a meromorphic function on $\mathbb C_{\infty}$ without zeroes. Its only poles, simple, are located at
$t=\theta,\theta^q,\theta^{q^2},\ldots$. As a consequence of Proposition \ref{proposition1}, we get:
\begin{corollary}\label{corollary1}
Let $j\geq 0$ be an integer, then:
$$(t-\theta^{q^j})\omega(t)\mid_{t=\theta^{q^j}}= -\frac{\widetilde{\pi}^{q^j}}{D_j}.$$
\end{corollary}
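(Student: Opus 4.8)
The plan is to read off the corollary as a residue computation: Proposition~\ref{proposition1} does not merely identify $\omega(t)$ with $\exp_C\!\left(\frac{\widetilde\pi}{\theta-t}\right)$, it also exhibits, via the expansion computed just before that proposition, the partial-fraction decomposition
\[
\omega(t)=\exp_C\!\left(\frac{\widetilde\pi}{\theta-t}\right)=\sum_{i\geq 0}\frac{\widetilde\pi^{q^i}}{D_i(\theta^{q^i}-t)} .
\]
By the valuation estimate recorded there, $v_{\infty}\!\left(\frac{\widetilde\pi^{q^i}}{D_i(\theta^{q^i}-t)}\right)=q^i\!\left(i+1-\frac{q}{q-1}\right)\to +\infty$, so this series represents $\omega$ as a meromorphic function on $\mathbb C_\infty$ whose only poles, all simple, are at $t=\theta^{q^i}$, $i\ge 0$. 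The corollary is then exactly the statement that the residue-type quantity $(t-\theta^{q^j})\omega(t)\mid_{t=\theta^{q^j}}$ equals $-\widetilde\pi^{q^j}/D_j$.

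First I would fix $j\ge 0$ and split off the $i=j$ term:
\[
(t-\theta^{q^j})\,\omega(t)=-\frac{\widetilde\pi^{q^j}}{D_j}+(t-\theta^{q^j})\,g(t),\qquad g(t):=\sum_{i\neq j}\frac{\widetilde\pi^{q^i}}{D_i(\theta^{q^i}-t)} .
\]
Since $\theta^{q^i}\neq\theta^{q^j}$ whenever $i\neq j$, the terms of $g$ have denominators that do not vanish near $t=\theta^{q^j}$, and the same valuation bound (with $v_\infty(\theta^{q^i}-t)$ locally constant there) shows $g$ converges on a neighbourhood of $t=\theta^{q^j}$; hence $g$ is regular at $\theta^{q^j}$ and $(t-\theta^{q^j})g(t)$ vanishes when $t=\theta^{q^j}$. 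Evaluating the displayed identity at $t=\theta^{q^j}$ yields $(t-\theta^{q^j})\omega(t)\mid_{t=\theta^{q^j}}=-\widetilde\pi^{q^j}/D_j$, which is the claim.

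As a cross-check, and as an alternative not using the tail series, I would argue by descent on $j$ from the functional equation $\tau(\omega(t))=(t-\theta)\omega(t)$ established in the proof of Proposition~\ref{proposition1}: applying $\tau^j$ (which fixes $t$) gives $\tau^j(\omega(t))=\prod_{i=0}^{j-1}(t-\theta^{q^i})\cdot\omega(t)$, whence
\[
(t-\theta^{q^j})\,\omega(t)\mid_{t=\theta^{q^j}}=\frac{\tau^j\!\big((t-\theta)\omega(t)\mid_{t=\theta}\big)}{\prod_{i=0}^{j-1}(\theta^{q^j}-\theta^{q^i})} .
\]
The numerator is $\tau^j(-\widetilde\pi)=-\widetilde\pi^{q^j}$, using the Euler product $\omega(t)=\lambda_\theta\prod_{i\geq 0}(1-t/\theta^{q^i})^{-1}$ and the definition of $\widetilde\pi$ to get $(t-\theta)\omega(t)\mid_{t=\theta}=-\widetilde\pi$; and the denominator is the classical identity $\prod_{i=0}^{j-1}(\theta^{q^j}-\theta^{q^i})=[j]\,D_{j-1}^q=D_j$, proved by a one-line induction from $D_j=[j]D_{j-1}^q$. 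I do not expect a genuine obstacle here: the only point needing a word of care is the routine justification that the tail series $g$ still converges near $t=\theta^{q^j}$ (equivalently, that termwise evaluation of the partial-fraction expansion is legitimate), and this is immediate from the valuation estimate already in hand.
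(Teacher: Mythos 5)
Your main argument is exactly the paper's (implicit) proof: the corollary is read off from the partial-fraction expansion $\omega(t)=\sum_{i\ge 0}\frac{\widetilde{\pi}^{q^i}}{D_i(\theta^{q^i}-t)}$ recorded just before Proposition~\ref{proposition1}, by isolating the $i=j$ term and observing that the tail, multiplied by $(t-\theta^{q^j})$, vanishes at $t=\theta^{q^j}$. Both your justification of termwise evaluation and your alternative descent via $\tau(\omega)=(t-\theta)\omega$ together with $\prod_{i=0}^{j-1}(\theta^{q^j}-\theta^{q^i})=D_j$ are correct.
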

\subsection{The one digit case}\label{onedigit}
Let us consider the following $L$-series:
$$L(t)=L_1(t)=\sum_{k\geq 0}\sum_{a\in A_{+,k}}\frac{a(t)}{a} \in \mathbb T_t.$$
By Proposition \ref{PropositionEX1}, we have the following equality in $\mathbb T_t$ (see \cite{PEL}, Theorem 1):
$$\frac{L(t)\omega(t)}{\widetilde{\pi}}=\frac{1}{\theta-t}.$$
This implies that $L(t)$ extends to an entire function on $\mathbb C_{\infty}$ (see also Lemma \ref{LemmaATR1} or \cite[Proposition 6]{AP}). We set:
$$L'(t)=\sum_{k\geq 0}\sum_{a\in A_{+,k}}\frac{a'(t)}{a} \in \mathbb T_t,$$
where $a'(t)$ denotes the derivative $\frac{d}{dt}a(t)$ of $a(t)$ with respect to $t.$
The derivative $\frac{d}{dt}$ inducing a continuous endomorphism of the 
algebra of entire functions $\bC_\infty\rightarrow\bC_\infty$,
$L'(t)$ extends to an entire function on $\mathbb C_{\infty}.$ Thus, for $j\geq 0$ an integer, $\sum_{k\geq 1}\sum_{a\in A_{+,k}}\frac{a'^{q^j}}{a}$ converges in $K_{\infty}$ and we have:
$$\delta_{q^j}= \sum_{k\geq 1}\sum_{a\in A_{+,k}}\frac{a'^{q^j}}{a}=L'(t)\mid_{t=\theta^{q^j}}.$$
\begin{proposition}\label{proposition2} The following properties hold:
\begin{enumerate}
\item We have:
$$\delta_1=-\sum_{k\geq1} \frac{1}{[k]}.$$
\item Let $j\geq 1$ be an integer, then:
$$\delta_{q^j}=\frac{\Pi(q^j)}{[j]}\widetilde{\pi}^{1-q^j}.$$
\end{enumerate}
\end{proposition}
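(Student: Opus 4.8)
The plan is to deduce both formulas from the functional equation of Proposition~\ref{PropositionEX1}, namely $\frac{L(t)\omega(t)}{\widetilde\pi}=\frac{1}{\theta-t}$ in $\mathbb T_t$, by differentiating with respect to $t$ and using the already-recorded identity $\delta_{q^j}=L'(t)\mid_{t=\theta^{q^j}}$. Recall that $\omega$ extends to a meromorphic function on $\mathbb C_\infty$ with no zeros whose poles, all simple, lie exactly at $\theta,\theta^q,\theta^{q^2},\dots$; so the two assertions amount to evaluating $L'$ at a point where $L$ is holomorphic and non-zero (namely $t=\theta$) and at a point where $L$ has a simple zero (namely $t=\theta^{q^j}$, $j\ge 1$), and these two cases I would treat in slightly different ways.

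For (1): I would take the logarithmic derivative of the functional equation, obtaining $\frac{L'(t)}{L(t)}+\frac{\omega'(t)}{\omega(t)}=\frac{1}{\theta-t}$. From $\omega(t)=\lambda_\theta\prod_{j\ge 0}(1-t\theta^{-q^j})^{-1}$ one computes termwise $\frac{\omega'(t)}{\omega(t)}=\sum_{j\ge 0}\frac{1}{\theta^{q^j}-t}$, the series converging on every bounded disk because $v_\infty(\theta^{q^j}-t)\to-\infty$. Isolating the term $j=0$ gives $\frac{L'(t)}{L(t)}=-\sum_{j\ge 1}\frac{1}{\theta^{q^j}-t}$, and since $L(\theta)=L_1(\theta)=1$ (as noted in the proof of Proposition~\ref{PropositionEX1}), evaluating at $t=\theta$ yields $\delta_1=L'(\theta)=-\sum_{j\ge 1}\frac{1}{\theta^{q^j}-\theta}=-\sum_{j\ge 1}\frac{1}{[j]}$.

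For (2): fix $j\ge 1$ and put $\alpha=\theta^{q^j}$. Writing $L(t)=\frac{\widetilde\pi}{(\theta-t)\omega(t)}$, the factor $(t-\alpha)\omega(t)$ is holomorphic near $\alpha$ with nonzero value $-\widetilde\pi^{q^j}/D_j$ by Corollary~\ref{corollary1}, while $\theta-t$ does not vanish at $\alpha$; hence $L$ has a simple zero at $\alpha$. Writing $L(t)=(t-\alpha)h(t)$ with $h$ holomorphic at $\alpha$ we get $L'(\alpha)=h(\alpha)$, and $h(t)=\frac{L(t)}{t-\alpha}=\frac{\widetilde\pi}{(\theta-t)\bigl((t-\alpha)\omega(t)\bigr)}$, so
$$\delta_{q^j}=h(\alpha)=\frac{\widetilde\pi}{(\theta-\theta^{q^j})\bigl(-\widetilde\pi^{q^j}/D_j\bigr)}=\frac{D_j}{[j]}\,\widetilde\pi^{\,1-q^j}=\frac{\Pi(q^j)}{[j]}\,\widetilde\pi^{\,1-q^j},$$
using $\Pi(q^j)=D_j$.

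The computations are short; the only things needing care are the termwise manipulation of the series for $\omega'/\omega$ and of the expansions near $\theta^{q^j}$ in the rigid-analytic setting over $\mathbb C_\infty$, and the identity $L'(t)\mid_{t=\theta^{q^j}}=\sum_k\sum_{a\in A_{+,k}}a'^{q^j}/a$, which uses that $x^{q^j}=x$ for $x\in\mathbb F_q$ (so that $(a'(\theta))^{q^j}$ and the value at $t=\theta^{q^j}$ of $\frac{d}{dt}a(t)$ agree). The only genuine subtlety will be step (2), extracting the value of $L'$ at a zero of $L$ from the functional equation, and Corollary~\ref{corollary1} resolves it at once.
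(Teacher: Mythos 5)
Your proposal is correct. Part (2) is essentially the paper's own argument: the authors likewise note that $L_1$ vanishes at $\theta^{q^j}$ for $j\ge 1$, write $\delta_{q^j}=\frac{L(t)}{t-\theta^{q^j}}\big|_{t=\theta^{q^j}}$, and evaluate via the functional equation together with Corollary~\ref{corollary1}; your residue computation and signs check out ($\theta-\theta^{q^j}=-[j]$, $\Pi(q^j)=D_j$). For part (1), however, you take a genuinely different route. The paper's proof is a purely algebraic one-liner: from the classical identity $D_n=\prod_{a\in A_{+,n}}a$ and the factorization $D_n=[n][n-1]^q\cdots[1]^{q^{n-1}}$, logarithmic differentiation in $\theta$ of this \emph{finite} product gives $\sum_{a\in A_{+,n}}\frac{a'}{a}=\frac{D_n'}{D_n}=-\frac{1}{[n]}$ exactly for each $n$ (the terms $[k]^{q^{n-k}}$ with $k<n$ contribute $0$ in characteristic $p$), and one sums over $n$. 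Your argument instead logarithmically differentiates Pellarin's identity $L(t)\omega(t)=\widetilde{\pi}/(\theta-t)$ and evaluates at $t=\theta$, which requires the analytic continuation of $L_1$ and $\omega$ past the unit disk and the termwise differentiation of the infinite product defining $\omega$ — more analytic overhead, but it has the merit of treating (1) and (2) uniformly as evaluations of $L'$ at $t=\theta^{q^j}$ for $j=0$ versus $j\ge1$, using the single functional equation; the paper's route has the merit of giving the exact degree-by-degree partial sums $\sum_{a\in A_{+,n}}a'/a=-1/[n]$ with no convergence or continuation issues. Both are valid; your identity $\frac{\omega'}{\omega}=\sum_{j\ge0}\frac{1}{\theta^{q^j}-t}$ and the cancellation of the $j=0$ term against $\frac{1}{\theta-t}$ are correct, and $L_1(\theta)=1$ is indeed recorded in the proof of Proposition~\ref{PropositionEX1}.
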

\begin{proof}
\begin{enumerate}
\item It is well known that, for $n>0$,
$D_n=\prod_{a\in A_{+,n}}a$. Therefore, 
$\sum_{a\in A_{+,n}}\frac{a'}{a}=-\frac{1}{[n]}$ from which the first formula 
follows.
\item By \cite[Remark 8.13.10]{GOS}, we have:
$$L(t)\mid_{t=\theta^{q^j}}=0.$$
Thus:
$$\delta_{q^j}=L'(t)\mid_{t=\theta^{q^j}}= \frac{L(t)}{t-\theta^{q^j}}\mid_{t=\theta^{q^j}}.$$
But, 
$$\frac{\frac{L(t)}{t-\theta^{q^j}}(t-\theta^{q^j})\omega(t)}{\widetilde{\pi}}=\frac{1}{\theta-t}.$$
It remains to apply Corollary \ref{corollary1}.
\end{enumerate}
\end{proof}
\begin{remark}
{\em The transcendence over $K$ of the ``bracket series" $\delta_1=\sum_{i\geq 1}\frac{1}{[i]}$
was first obtained by Wade \cite{WAD}. The transcendence of $\delta_1$ directly implies the transcendence of $\widetilde{\pi}$.}
\end{remark}
\subsection{The several digits case}\label{severaldigits}
As a consequence of  \cite{APT}, Lemma 7.6 (see also \cite{AP}, Corollary 21), the series $L_N(t) = \sum_{d\geq0} \sum_{a\in A_{+,d}} \frac {a(t)^N}a$ has a zero of order at least $N$ at $t=\theta$. Thus,
$$\widetilde L_N(t) = \sum_{d\geq1} \sum_{a\in A_{+,d}} \frac {a'(t)^N}a$$
defines an entire function on $\bC_\infty$ such that
$$\delta_N = \widetilde L_N(\theta).$$

\begin{proof}[Proof of Theorem \ref{theorem1}]
Recall that  $N=\sum_{i=0}^r n_i q^i,$ is the $q$-expansion of $N$.
We set $s=\ell_q(N)$. We can assume that $s\geq q$ by Proposition  \ref{proposition2}. 
From the definition of $B_N(t,\theta)$ in \S\ref{BN}, we have :
$$(-1)^{\frac{s-1}{q-1}}B_N(t,\theta)=L_N(t)\left(\prod_{i=0}^r\omega(t^{q^i})^{n_i}\right)\widetilde{\pi}^{-1}\in A[t].$$ 
Since
$$\delta_N = \widetilde L_N(\theta) = \left(\frac {L_N(t)}{\prod_{i=0}^r(t-\theta^{q^i})^{n_i}} \right)_{\mid t=\theta},$$
we obtain, by Corollary \ref{corollary1} and our previous discussions:
$$\beta_N= \frac{\delta_n\prod_{i= 0}^r(\frac{-\widetilde{\pi}^{q^i}}{D_i})^{n_i}}{\widetilde {\pi}}.$$
Now, by Proposition \ref{proposition2}, we have, for all $k\geq 1$, $D_k=[k]\delta_{q^k}\widetilde{\pi}^{q^k-1}.$
We obtain the Theorem by using the fact that:
$$\frac{\Pi(N)}{\Pi([\frac{N}{q}])^q}=\prod_{k\geq 1} [k]^{n_k}.$$
\end{proof}

\renewcommand{\thesection}{B}

\section{Table}\label{tables}
We give an explicit expression of the polynomials $\mathbb{B}_s$ for $s\in\{q, 2q-1, 3q-2\}$. We recall that $\mathbb{B}_s$ is monic of degree $r=\frac{s-q}{q-1}$.
One obtains the corresponding expressions for $B_N(t, \theta)$ if $\ell_q(N)=s$ by evaluating the variables $t_i$'s as in \S\ref{BN}.
\begin{eqnarray*}
\BB_q &=& 1,\\
\BB_{2q-1} &=& \theta - \sum_{i_1<\dots<i_{q}}t_{i_1}\cdots t_{i_q,}\\
\BB_{3q-2} &=&\theta^2 - \theta \left(\sum_{i_1<\dots<i_{2q-1}} \prod_{j=1}^{2q-1}t_{i_j} + \sum_{i_1<\dots<i_{q}} \prod_{j=1}^{q}t_{i_j} \right) + \\
 &&+ \left(\sum_{i_1<\dots<i_{q}} \prod_{j=1}^{q}t_{i_j}^2\sum_{m_1<\dots<m_{q-1}, m_j\neq i_{j'}} \prod_{j=1}^{q-1}t_{i_j} +  \sum_{i_1<\dots<i_{2q}} \prod_{j=1}^{2q}t_{i_j}\right).
\end{eqnarray*}
One easily computes the discriminant of $\BB_{3q-2}$ from this table. It is then an easy computation to prove that $B_N(\theta,t)$ has only simple roots for all $N$ such that $\ell_q(N)=3q-2$.



\begin{thebibliography}{19}
 \bibitem{AND&THA} G. Anderson, D. Thakur, Tensor powers of the Carlitz module and zeta values, {\it Annals of Mathematics} {\bf 132} (1990), 159-191.
\bibitem{AO} B. Angl\`es, M. Ould Douh, Arithmetic of "units" in $\mathbb F_q[T],$ {\it Publications math\'ematiques de Besan\c con} Fascicule 2 (2012), 5-17. 
\bibitem{AP} B. Angl\`es, F. Pellarin, Functional identities for $L$-series values in positive characteristic, {\it Journal of Number Theory} {\bf 142}  (2014), 223-251.
\bibitem{AP2} B. Angl\`es, F. Pellarin, Universal Gauss-Thakur sums and $L$-series, {\it Inventiones mathematicae} {\bf 200} (2015), 653-669 . 
\bibitem{APT} B. Angl\`es, F. Pellarin, F. Tavares Ribeiro, with an appendix by F. Demeslay,  Arithmetic of positive characteristic $L$-series values in Tate algebras , to appear in  {\it Compositio Mathematica}, http://dx.doi.org/10.1112/S0010437X15007563.
\bibitem{AT} B. Angl\`es, L. Taelman, with an appendix by V. Bosser, Arithmetic of characteristicc $p$ special $L$-values, {\it Proceedings of the London Mathematical Society} {\bf 110} (2015), 1000-1032.
\bibitem{ATR} B. Angl\`es, F. Tavares-Ribeiro, Arithmetic of function fields units, arXiv:1506.06286 (2015). 
\bibitem{BOC} G. B\"ockle, The distribution of the zeros of the Goss zeta-function for $A= \frac{\mathbb F_2[x,y]}{(y^2+y+x^3+x+1)}$, {\it Mathematische Zeitschrift} {\bf 275} (2013), 835-861. 
\bibitem{CAR} L. Carlitz, On certain functions connected with polynomials in a Galois field, {\it Duke Mathematical  Journal} {\bf 1} (1935), 137-168.
 \bibitem{CON} K. Conrad. {\em The digit principle.} Journal of Number Theory {\bf 84}, October 2000, 230-257.
\bibitem{DEM} F. Demeslay, A class formula for $L$-series in positive characteristic, arXiv:1412.3704 (2014). 
\bibitem{DIV} F. Diaz-Vargas, Riemann Hypothesis for $\mathbb F_p[T],$ {\it Journal of Number Theory} {\bf 59} (1996), 313-318.
\bibitem{GOS} D. Goss, Basic Structures of Function Field Arithmetic, Springer, 1996.
\bibitem{KAZ} M. Kaneko's seminar talk in Caen (september 4th 2015) based on a forthcoming work of  M. Kaneko and D. Zagier on finite multiple zeta values.
\bibitem{PEL} F. Pellarin, Values of certain $L$-series in positive characteristic, {\it Annals of Mathematics} {\bf 176} (2012), 2055-2093 .
\bibitem{PEL2} F. Pellarin, Private communication to the first author.
\bibitem{PP} F. Pellarin, R. Perkins, On finite zeta values in positive characteristic, work in progress.
\bibitem{SHE} J. Sheats, The Riemann Hypothesis for the Goss Zeta Function for $\mathbb F_q[T],$ {\it Journal of Number Theory} {\bf 71} (1998), 121-157.
\bibitem{TAE1} L. Taelman, A Dirichlet unit theorem for Drinfeld modules, {\it Mathematische Annalen} {\bf 348} (2010), 899-907.
\bibitem{TAE1.1} L. Taelman, Special $L$-values of Drinfeld modules, {\it Annals of Mathematics} {\bf 175} (2012), 369-391.
\bibitem{TAE2} L. Taelman, A Herbrand-Ribet theorem for function fields, {\it Inventiones  mathematicae} {\bf 188} (2012), 253-275.
\bibitem{THA} D. Thakur, Gauss sums for $\mathbb F_q[t],$  {\it Inventiones  mathematicae}   {\bf 94} (1988), 105-112 .
\bibitem{WAD} L. I. Wade. {\em Certain quantities transcendental over $\operatorname{GF}(p^n,x)$}, Duke Math. J.
{\bf 8} (1941), 701-720.
 \end{thebibliography}
  \end{document}